\newtheorem{theorem}{Theorem}[section]
\newtheorem{lemma}[theorem]{Lemma}
\theoremstyle{definition}
\newtheorem{proposition}[theorem]{Proposition}
\newtheorem{remark}[theorem]{Remark}
\numberwithin{equation}{section}
\newcommand{\trace}{\mathrm{tr}}
\newcommand{\diag}{\mathrm{diag}}
\title{Infinitely many non-collapsed steady Ricci solitons on complex line bundles}
\author{Hanci Chi}
\address{Department of Pure Mathematics\\ Xi'an Jiaotong-Liverpool University\\ Suzhou 215123\\ China}
\email{hanci.chi@xjtlu.edu.cn}
\begin{document}
\maketitle

\begin{abstract}
We construct a continuous 3-parameter family of non-shrinking Ricci solitons on complex line bundles $\mathcal{O}(k)$ over $\mathbb{CP}^{2m+1}$, where the base space is not necessarily  K\"ahler--Einstein. Each $\mathcal{O}(k)$ with $k\in [3,2m+1]$ admits at least one asymptotically conical (AC) Ricci-flat metric in this family. For each $\mathcal{O}(k)$ with $k\geq 3$, the family includes infinitely many asymptotically paraboloidal (AP) steady Ricci solitons.
\end{abstract}

\let\thefootnote\relax\footnote{2020 Mathematics Subject Classification: 53E20 (primary), 53C25 (secondary).

Keywords: Ricci soliton, cohomogeneity one metric.

The author is supported by the NSFC (No. 12301078), the Natural Science Foundation of Jiangsu Province (BK-20220282), and XJTLU Research Development Fund (RDF-21-02-083).}
\section{Introduction}
Ricci solitons are candidates for blow-up singularities of the Ricci flow. Specifically, a shrinking Ricci soliton is a candidate for a Type I singularity while a steady Ricci soliton is a candidate for a Type II singularity. From Perelman's no-collapsing theorem \cite{perelman_entropy_2002}, Type II singularities must be non-collapsed. 

Complex line bundles $\mathcal{O}(k)$ over $\mathbb{CP}^n$ provide a rich source of examples for gradient Ricci solitons. The principal orbit of $\mathcal{O}(k)$ is the lens space $\mathbb{S}^{2n+1}/\mathbb{Z}_k$, a circle bundle over $\mathbb{CP}^n$. The integer $k$ classifies the circle bundle, corresponding to a multiple of the indivisible element in  $H^2(\mathbb{CP}^n,\mathbb{Z})$. This setup has led to numerous examples of Ricci solitons with various asymptotic behaviors. The B\'erard-Bergery metric on $\mathcal{O}(n+1)$ is an AC Ricci-flat metric; see \cite{berard-bergery_sur_1982}. By \cite{Cao_Elliptic_1996}, it is known that $\mathcal{O}(n+1)$ admits a 1-parameter family of steady K\"ahler--Ricci solitons. These solitons have collapsed volume growth. In particular, they are asymptotically cigar-paraboloidal (ACP). Expanding K\"ahler--Ricci solitons on $\mathcal{O}(k)$ with $k>n+1$ are found in \cite{feldman_rotationally_2003}. The $U(n+1)$-invariant ansatz is further generalized to complex line bundles over products of Fano manifolds in \cite{dancer2011ricci}. Recently, new steady Ricci solitons were found on $\mathcal{O}(k)$ independently by \cite{wink2023cohomogeneity}, \cite{stolarski2024steady} and \cite{appleton_family_2017}. It was proved in \cite{appleton_family_2017} that each $\mathcal{O}(k)$ with $k>n+1$ admits a non-collapsed AP steady Ricci soliton.

In this article, we examine complex line bundles $\mathcal{O}(k)$ over $\mathbb{CP}^{2m+1}$, where the symmetry condition is relaxed from $U(2m+2)$-invariance to $Sp(m+1)Sp(1)$-invariance, allowing the base space to be squashed. Specifically, the associated group triple is:
\begin{equation}
\label{eqn_group_triple}
(\mathsf{K},\mathsf{L},\mathsf{G})=([Sp(m)\Delta U(1)]\times \mathbb{Z}_k,Sp(m)U(1)U(1),Sp(m+1)U(1)).
\end{equation}
Identify $\mathbb{S}^{4m+3}/\mathbb{Z}_k$ as the quotient $\{\mathbf{q}\mid \mathbf{q}\in\mathbb{H}^{m+1}, |\mathbf{q}|=1\}/\{\mathbf{q}\sim e^{\frac{2\pi i}{k}}\mathbf{q}\}$. Each element $(A,z)\in Sp(m+1)U(1)$ acts on $\mathbb{S}^{4m+3}/\mathbb{Z}_k$ via $\mathbf{q}\mapsto A\mathbf{q}\bar{z}$. The isotropy subgroup of $(0,\dots,0,1)$ is
$$\mathsf{K}=\left\{\left(\diag(A,e^{\frac{2l\pi i}{k}}z),z\right)\mid z\in U(1), l=0,...,k-1\right\}.$$ The group action descends to $\mathbb{CP}^{2m+1}$, where the isotropy subgroup of $[0: ... :0:1]$ is 
$$\mathsf{H}=\left\{\left(\diag(A,z_1),z_2\right)\mid z_1,z_2\in U(1)\right\}.$$ In the special case where $k=1$, this construction yields the quaternionic Hopf fibration. 

The present work is closely related to, but fundamentally different from, our previous work \cite{chi_Non-Shrinking_2024}, where continuous 1-parameter families of non-collapsed steady Ricci solitons were constructed on Euclidean spaces and the quaternionic line bundle over $\mathbb{HP}^m$. Here we study complex line bundles $\mathcal{O}(k)\rightarrow\mathbb{CP}^{2m+1}$, where the principal orbit is $\mathbb{S}^{4m+3}/\mathbb{Z}_k$. Since the principal orbit is locally isometric to $\mathbb{S}^{4m+3}$, the cohomogeneity one dynamical system on $\mathcal{O}(k)$ is independent of the integer $k$. Nevertheless, different topology types lead to different smoothness conditions. Furthermore, our global analysis shows that the existence of non-collapsed solitons also depends on the topology of the underlying manifold, which is encoded by $k$.

For $k\in\{1,2\}$, we construct a compact invariant set to obtain the following theorem.

\begin{theorem}
\label{thm_1}
For $k\in \{1,2\}$, there exists a continuous $3$-parameter family of complete Ricci solitons $\{\xi(k,\theta,s_4,s_5)\mid \theta\in (0,\pi),s_4\geq 0, s_5\geq 0\}$ on $\mathcal{O}(k)$. In particular, 
\begin{enumerate}
\item
each $\xi(k,\theta,s_4,s_5)$ with $s_4,s_5>0$ is an AC non-Einstein expanding Ricci soliton;
\item
each $\xi(k,\theta,0,s_5)$ with $s_5>0$ is an asymptotically hyperbolic (AH) Einstein metric;
\item
each $\xi(k,\theta,0,0)$ is an asymptotically locally conical (ALC) Ricci-flat metric. The base of the asymptotic cone is the non-K\"ahler $\mathbb{CP}^{2m+1}$;
\item
each $\xi(k,\theta,s_4,0)$ with $s_4>0$ is an ACP steady Ricci soliton. The base of the asymptotic paraboloid is the non-K\"ahler $\mathbb{CP}^{2m+1}$.  
\end{enumerate}
\end{theorem}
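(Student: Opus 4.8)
The plan is to realize each $\xi(k,\theta,s_4,s_5)$ as a globally defined trajectory of the cohomogeneity one Ricci soliton system associated to \eqref{eqn_group_triple}, emanating from the singular orbit $\mathbb{CP}^{2m+1}$ (the zero section of $\mathcal{O}(k)$). Writing the invariant metric as $g=dt^2+g_t$, where $g_t$ is built from the three $Sp(m+1)Sp(1)$-invariant metric functions on the collapsing circle fiber, the twistor $\mathbb{S}^2$, and the $\mathbb{HP}^m$ base, the soliton equation $\ricci+\nabla^2 u=\lambda g$ reduces to an autonomous system in normalized variables (essentially the logarithmic derivatives of the three metric functions together with one variable recording $u'$). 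The three free parameters then correspond to the admissible initial data at the zero section: $\theta$ fixes the squashing ratio of the twistor $\mathbb{S}^2$ against the $\mathbb{HP}^m$ base there, while $s_4$ and $s_5$ record the strength of the soliton potential and the soliton constant $\lambda$.

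First I would solve the initial value problem at the singular orbit. Collapsing of the circle fiber imposes a one-sided boundary condition, and requiring smoothness of the metric across $\mathbb{CP}^{2m+1}$ pins down all but finitely many Taylor coefficients; the residual freedom is exactly $(\theta,s_4,s_5)$. A standard unstable/center-manifold (or Malgrange-type power-series) argument then yields, for each admissible parameter value, a unique smooth solution curve leaving the singular orbit and depending continuously on the parameters. This produces the $3$-parameter family locally near $t=0$; the real content of the theorem is that every such curve extends to a complete metric and that its end can be identified.

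The heart of the argument — and the step flagged just before the statement — is to build a compact set $\mathcal{S}$ in the compactified phase space that is invariant under the forward flow and that captures each initial curve once it has left a neighborhood of the singular orbit. I would cut out $\mathcal{S}$ by polynomial inequalities adapted to the monotone and sign-definite quantities of the system (positivity of the metric functions and the soliton/Einstein constraint), and verify invariance face by face on $\partial\mathcal{S}$, checking that the vector field is inward-pointing or tangent there. Trapping the trajectory in $\mathcal{S}$ gives long-time existence, and unwinding the reparametrization of the independent variable shows that $t$ ranges over all of $[0,\infty)$, i.e.\ that the metric is complete. The restriction to $k\in\{1,2\}$ enters precisely here: the smoothness data at the zero section, which depends on $k$, is what places the initial curve on the correct side of $\partial\mathcal{S}$, whereas for larger $k$ the trajectories leave this region and need the separate treatment used for $k\ge 3$.

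Finally, the four sub-cases are separated by the $\omega$-limit of the trapped trajectory. I expect the loci $\{s_5=0\}$ and $\{s_4=0\}$ to be invariant sub-systems — the $\lambda=0$ (steady/Ricci-flat) and the potential-free (Einstein) strata — whose intersection is the Ricci-flat locus. Linearizing at the relevant limiting fixed points then yields an ALC Ricci-flat metric for $s_4=s_5=0$ and an ACP steady soliton for $s_4>0,\,s_5=0$, both asymptotic to the squashed non-K\"ahler $\mathbb{CP}^{2m+1}/\mathbb{Z}_k$ determined by $\theta$; while for $s_5>0$ the trajectory reaches an exponentially expanding (hyperbolic) end when $s_4=0$, giving the AH Einstein metric, and a conically expanding end when $s_4>0$, giving the AC non-Einstein expanding soliton. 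The main obstacle I anticipate is the invariance verification for $\mathcal{S}$: finding inequalities that are simultaneously flow-invariant, compatible with the $k$-dependent initial data, and sharp enough to force the correct limiting fixed point in each of the four regimes, which I expect to require a delicate analysis on the faces and edges of $\partial\mathcal{S}$ together with careful control of the non-compact directions before compactification.
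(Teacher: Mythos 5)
Your outline reproduces the paper's strategy at the level of structure: local solutions from unstable/center-manifold analysis at the singular orbit with residual parameters $(\theta,s_4,s_5)$, long-time existence from a compact flow-invariant region (the paper's set $\mathcal{F}$ in \eqref{eqn_setF}, whose defining inequality $F_2\geq 0$ is exactly where the restriction $k\leq 2$ enters, via $\lim_{\eta\to-\infty}F_2=-1+\frac{2}{k}$), and asymptotics read off from the limiting behavior of the trapped trajectory. Up to that point the two arguments agree, and your anticipation that the $k$-dependent initial data places the curve on the correct side of the boundary is exactly what happens.

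The genuine gap is in your final step, where you assert that the four asymptotic regimes follow by ``linearizing at the relevant limiting fixed points.'' For the steady and Ricci-flat strata ($s_5=0$) this begs the question of \emph{which} limit point the trajectory converges to, and that is the hard part of items (3) and (4). The phase space contains two competing limiting behaviors, both compatible with being trapped in a compact invariant set: the collapsed one (ALC/ACP ends fibering over $\mathbb{CP}^{2m+1}/\mathbb{Z}_k$, attained inside the region $\mathcal{A}=\{X_1-X_2\leq 0\}$) and the non-collapsed one (AC/AP ends over the Jensen sphere, i.e.\ convergence to the critical points $p_1,p_2$ with $Z_1=1$, which lie on the boundary of $\mathcal{F}$, so trapping alone does not exclude them). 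This dichotomy is not a technicality: for $k\geq 3$ the very same system produces complete solitons with the non-collapsed asymptotics (Theorems \ref{thm_2-2} and \ref{thm_3}), so no local or linearization argument can decide the outcome; the value of $k$ must be used a second time, beyond the initial trapping. The paper settles it globally: by Lemma \ref{lem_B, C or D} a trajectory either enters the invariant set $\mathcal{A}$ or stays in $\mathcal{B}$, and in the proof of Theorem \ref{thm_1} the second alternative is ruled out for $k\in\{1,2\}$ because $F_2$ stays positive along the curve, while $F_2$ vanishes at the would-be limits ($p_1$, $p_2$, or $(0,0,0,1,0,0,0,0)$ in the steady case) and satisfies $F_2'>0$ near them --- a contradiction. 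Without an argument of this kind, your proposal cannot justify the ALC/ACP conclusions, nor the claim that the asymptotic base is the non-K\"ahler $\mathbb{CP}^{2m+1}/\mathbb{Z}_k$ rather than the Jensen $\mathbb{S}^{4m+3}/\mathbb{Z}_k$.
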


The parameter $s_5$ controls the generalized mean curvature of the principal orbit. As the generalized mean curvature is freely scaled under homothetic change if $\epsilon=0$, the parameter $s_5$ is not needed and we set $s_5=0$ for the steady case. The parameter $s_4$ is related to the second derivative of the potential function, and the Ricci soliton is Einstein if $s_4=0$. The parameter $\theta$ controls how $\mathbb{CP}^{m+1}$ is squashed as a twistor space over $\mathbb{HP}^m$. Solutions with $\theta=0$ admit $U(2m+2)$-symmetry and we have the Fubini--Study $\mathbb{CP}^{m+1}$. The limiting case where $\theta=\pi$ blows up the base space $\mathbb{HP}^m$. The cohomogeneity one steady soliton equation of this limiting case is equivalent to that of a complex line bundle over an irreducible $\mathbb{CP}^1$, whose principal orbit is $\mathbb{S}^3/\mathbb{Z}_k$. 

If $k\geq 3$, our techniques for proving Theorem \ref{thm_1} become obsolete. Nevertheless, we generalize \cite[Theorem 5.1]{appleton_family_2017} and obtain the following existence theorems.
\begin{theorem}
\label{thm_2-1}
For each $k\geq 3$ and $\theta\in (0,\pi)$, there exists a $\beta_{k,\theta}\geq 0$ such that $\{\xi(k,\theta,s_4,s_5)\mid s_4\geq \beta_{k,\theta}, s_5> 0\}$ is a continuous $2$-parameter family of complete expanding Ricci solitons. In particular,
\begin{enumerate}
\item
each $\xi(k,\theta,s_4,s_5)$ with $s_4>\beta_{k,\theta}$ is an AC non-Einstein Ricci soliton;
\item
each $\xi(k,\theta,\beta_{k,\theta},s_5)$ is an AH negative Einstein metric if $\beta_{k,\theta}=0$, an AC non-Einstein Ricci soliton if $\beta_{k,\theta}>0$.
\end{enumerate}
\end{theorem}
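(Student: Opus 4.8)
The plan is to follow the strategy of \cite[Theorem 5.1]{appleton_family_2017}, adapted to the squashed regime $\theta\in(0,\pi)$ and to $k\geq 3$. First I would reduce the $Sp(m+1)Sp(1)$-invariant expanding soliton equation $\ricci(g)+\mathrm{Hess}(u)=\epsilon g$ (with $\epsilon<0$) to the associated cohomogeneity one ODE system. Writing the metric as $g=dt^2+\sum_i f_i(t)^2\gamma_i$ in terms of the isotropy-invariant forms $\gamma_i$ on the principal orbit $\mathbb{S}^{4m+3}/\mathbb{Z}_k$ and letting $u$ be the potential, this becomes an autonomous system. I would then pass to normalized logarithmic variables $X_i=f_i'/(f_i\eta)$ together with one variable tracking $u'/\eta$, where $\eta$ is the generalized mean curvature; this compactifies the flow to a polynomial dynamical system on a bounded region $\Omega$. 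The smoothness conditions at the singular orbit (the zero section) single out a family of local solutions emanating from a fixed point $P_0\in\partial\Omega$, smoothly parametrized by $\theta$ and by $(s_4,s_5)$, with local existence and smooth dependence following from the standard desingularization of such ODEs at the singular orbit.

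Next I would identify the critical points of the compactified system that govern the asymptotics. Two are relevant for $s_5>0$: the conical fixed point $P_C$, whose trajectories produce metrics asymptotic to a Ricci-flat cone (AC), and the Einstein fixed point $P_E$, reached only when $s_4=0$, producing the AH negative Einstein metric. The expanding structure $\epsilon<0$ ensures forward-completeness once the trajectory converges to $P_C$. I expect to verify by linearization that $P_C$ is attracting within $\Omega$ for the relevant parameter range, so that convergence to $P_C$ is an \emph{open} condition on the shooting data.

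Then comes the shooting argument. For fixed $\theta\in(0,\pi)$ and $s_5>0$, I would vary $s_4\geq 0$ and track the trajectory $\gamma_{s_4}$ out of $P_0$. I would show that
\[
A=\{\, s_4\geq 0 : \gamma_{s_4}\text{ is complete and converges to }P_C \,\}, \qquad B=\{\, s_4\geq 0 : \gamma_{s_4}\text{ is incomplete} \,\}
\]
are both open, and that $A$ contains a neighborhood of $+\infty$ because for large $s_4$ the soliton gradient term dominates the curvature terms and drives $\gamma_{s_4}$ into the basin of $P_C$. Setting $\beta_{k,\theta}=\inf A$, connectedness of $[0,\infty)$ forces $A\supseteq(\beta_{k,\theta},\infty)$, so complete AC non-Einstein solitons exist for every $s_4>\beta_{k,\theta}$. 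The endpoint $s_4=\beta_{k,\theta}$ is examined separately: when $\beta_{k,\theta}=0$ it is the Einstein solution $s_4=0$, whose trajectory limits on $P_E$ and yields the AH negative Einstein metric; when $\beta_{k,\theta}>0$, a compactness argument shows $\gamma_{\beta_{k,\theta}}$ still converges to $P_C$, giving a complete AC non-Einstein soliton at the endpoint as well.

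The main obstacle will be controlling the trajectories for $s_4$ near $\beta_{k,\theta}$ and proving the threshold is \emph{realized} rather than merely approached. For $s_4<\beta_{k,\theta}$ I expect $\gamma_{s_4}$ to exit $\Omega$ through a face where some $f_i\to 0$ in finite time, producing an incomplete (collapsed) solution; ruling this out for $s_4\geq\beta_{k,\theta}$, and conversely forcing it below the threshold, is where the real work lies, and is exactly the point at which the compact-invariant-set method of Theorem~\ref{thm_1} breaks down for $k\geq 3$. The key tool should be a monotonicity lemma for a suitable ratio of the potential gradient to the mean curvature along the flow, supplying a trapping region near $P_C$ that is invariant once entered; combined with continuous dependence on $s_4$ this yields both the openness used above and the correct classification of the endpoint. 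The squashing $\theta>0$ enters only through the coefficients of the polynomial system, so the qualitative picture from the $U(2m+2)$-invariant case of \cite{appleton_family_2017} persists, while the precise value—and even the sign—of $\beta_{k,\theta}$ depends on $\theta$ and $k$, which is why the Einstein endpoint arises only on the locus $\beta_{k,\theta}=0$.
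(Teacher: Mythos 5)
Your overall strategy is essentially the paper's: compactify to a polynomial system, shoot from the fixed point $p_0$ corresponding to the singular orbit, show that large $s_4$ is good via a monotonicity estimate, and define $\beta_{k,\theta}$ as a threshold. Your proposed key tool, a ``monotonicity lemma for a suitable ratio of the potential gradient to the mean curvature,'' is exactly what the paper uses: Proposition \ref{prop_concave potential} shows $\frac{H-1}{\sqrt{-Q}}$ decreases along curves in $\mathcal{B}$, and Lemma \ref{lem_long exiting xik} integrates the resulting decay of $\frac{\sqrt{Z_1}'}{\sqrt{-Q}}$ to keep $Z_1<1$ for $s_4$ large, generalizing Appleton's Lemma 5.4.

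There is, however, a genuine logical gap at your connectedness step. From ``$A$ and $B$ are both open'' and ``$A$ contains a neighborhood of $+\infty$'' you conclude $A\supseteq(\beta_{k,\theta},\infty)$ with $\beta_{k,\theta}=\inf A$. This does not follow: two disjoint open sets cannot cover the connected interval $[0,\infty)$, so your complete/incomplete dichotomy cannot be exhaustive, and without a monotonicity-in-$s_4$ statement (which neither you nor the paper proves) the open set $A$ could have holes above its infimum. The paper's trichotomy is different and deliberately avoids any incompleteness claim: a trajectory stays in the compact set $\mathcal{B}$, enters the compact invariant set $\mathcal{A}$ (both cases give completeness by the escape lemma, not by linearized attractivity at a conical point -- note the Einstein limit set $q_0$ is a continuum, so attractivity is delicate), or enters the invariant set $\mathcal{C}=\mathcal{RS}\cap\{Z_1\geq 1\}\cap\{X_1\geq X_2\}$, in which case nothing is asserted. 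Correspondingly, $\beta_{k,\theta}$ is \emph{not} defined as $\inf A$ but as the infimum of uniform thresholds $\tilde{\alpha}$ such that \emph{no} $\xi(k,\theta,s_4,s_5)$ with $s_4\geq\tilde{\alpha}$ and $s_5\geq 0$ enters $\mathcal{C}$; Lemma \ref{lem_long exiting xik} guarantees such thresholds exist, and then every $s_4>\beta_{k,\theta}$ is automatically good. This also repairs a second defect of your construction: shooting at fixed $s_5$ produces a threshold depending a priori on $s_5$, whereas the theorem requires a single $\beta_{k,\theta}$ valid for all $s_5>0$, which the paper's uniform-in-$s_5$ definition delivers. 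Finally, your vague ``compactness argument'' at the endpoint is replaced in the paper by a clean continuity-in-parameters contradiction (Lemma \ref{lem_for thm 1.2 1.3}): entry into $\mathcal{C}$ occurs through the face $\{Z_1=1\}\cap\{X_1-X_2>0\}$ and hence persists under small perturbations of $s_4$, so if the threshold curve entered $\mathcal{C}$, some $s_4>\beta_{k,\theta}$ would as well, contradicting the definition. With $\beta_{k,\theta}$ redefined this way and the endpoint treated by continuity, your outline closes; the asymptotics (AC for $s_4,s_5>0$, AH Einstein when $\beta_{k,\theta}=0$ and $s_4=0$) then follow as in Lemmas \ref{lem_asymp for steady in A} and \ref{lem_asymp for RF in A} rather than from a basin-of-attraction analysis.
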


\begin{theorem}
\label{thm_2-2}
For each $k\geq 3$ and $\theta\in (0,\pi)$, there exists an $\alpha_{k,\theta}\geq 0$ such that $\{\xi(k,\theta,s_4,0)\mid s_4\geq \alpha_{k,\theta}\}$ is a continuous $1$-parameter family of complete steady Ricci solitons. In particular,
\begin{enumerate}
\item
each $\xi(k,\theta,\alpha_{k,\theta},0)$ is an AC or an ALC Ricci-flat metric if $\alpha_{k,\theta}=0$;
\item
each $\xi(k,\theta,\alpha_{k,\theta},0)$ is an AP non-Einstein steady soliton if $\alpha_{k,\theta}>0$. The base of the asymptotic paraboloid is the the Jensen $\mathbb{S}^{4m+3}/\mathbb{Z}_k$.
\end{enumerate}
\end{theorem}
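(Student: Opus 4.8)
The plan is to adapt the shooting method of \cite{appleton_family_2017} to the $Sp(m+1)Sp(1)$-invariant setting. First I would write the steady soliton equation $\ricci + \nabla^2 u = 0$ (the case $\epsilon=0$) as a first-order ODE system for the three warping functions $(a,b,c)$ of the invariant metric — $a$ for the collapsing $U(1)$-fibre of $\mathcal{O}(k)$, $b$ for the $\mathbb{S}^2$-fibre of the twistor fibration $\mathbb{CP}^{2m+1}\to\mathbb{HP}^m$, and $c$ for the $\mathbb{HP}^m$ base — together with the potential derivative $u'$. Smoothness of the metric across the singular orbit $\mathbb{CP}^{2m+1}$ (the zero section, where $a\to 0$) forces a definite initial behaviour, and a standard power-series / Picard argument produces a local solution depending analytically on the squashing parameter $\theta$ (which fixes the ratio $b(0)/c(0)$) and on the shape parameter $s_4$ (the second derivative of $u$). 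The steady soliton identity $\scc + |u'|^2 = \text{const}$ provides a first integral that reduces the effective dimension of the phase space and serves as the main bookkeeping device throughout.

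Next I would run a shooting argument in $s_4$ with $k$ and $\theta$ fixed. After compactifying the flow, the relevant fixed points at $t=\infty$ are the Ricci-flat cone over the lens space (AC), its circle-bundle degeneration (ALC), and the paraboloidal end whose cross-section is the Jensen Einstein metric on $\mathbb{S}^{4m+3}/\mathbb{Z}_k$ (AP). The first goal is to show that for $s_4$ large the trajectory is forward-complete and limits onto the AP fixed point: strong growth of $u'$ dominates the system, keeping $(a,b,c)$ positive for all $t>0$ and driving the rescaled cross-section to the Jensen metric. I would then set $\alpha_{k,\theta} := \inf\{s_4\geq 0 : \xi(k,\theta,s_4,0)\text{ is complete}\}$ and argue, using monotone dependence of the forward behaviour on $s_4$ together with the first integral, that the completeness set is an interval $[\alpha_{k,\theta},\infty)$; for $s_4<\alpha_{k,\theta}$ one of the warping functions must reach zero in finite time, yielding an incomplete singular solution.

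It remains to identify the boundary solution at $s_4=\alpha_{k,\theta}$, where the dichotomy is geometric. If the Ricci-flat solution $s_4=0$ is already complete, then $\alpha_{k,\theta}=0$, and according to whether the $U(1)$-fibre length $a(t)$ diverges or stabilizes the end is AC or ALC. Otherwise $\alpha_{k,\theta}>0$, and a compactness argument along the family shows the critical trajectory is itself a complete non-Einstein solution which, being the boundary of the AP regime, converges to the AP fixed point, so the base of its asymptotic paraboloid is the Jensen sphere quotient. I expect the main obstacle to be exactly this closedness-and-identification step: proving that the infimum $\alpha_{k,\theta}$ is \emph{attained} by a complete solution rather than the completeness degenerating in the limit, and pinning down its asymptotic model. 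This rests on uniform a priori control of $(a,b,c,u')$ along the family near $\alpha_{k,\theta}$ and on a careful linearization at the AP fixed point to exclude drift toward the AC/ALC fixed points — precisely the place where the extra warping function introduced by the squashing $\theta$ complicates the two-function analysis of \cite{appleton_family_2017}.
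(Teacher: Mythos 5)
Your scaffolding (a shooting parameter $s_4$, a critical value $\alpha_{k,\theta}$, the AC/ALC dichotomy when $\alpha_{k,\theta}=0$) matches the paper, but your phase portrait is inverted in a way that breaks both key steps. In the paper (and in \cite{appleton_family_2017}, which it generalizes), the AP soliton over the Jensen sphere is the \emph{exceptional, critical} behavior, not the generic large-$s_4$ behavior: trajectories with $s_4$ above the critical value may enter the compact invariant set $\mathcal{A}$ and produce \emph{collapsed} ACP solitons over $\mathbb{CP}^{2m+1}/\mathbb{Z}_k$ --- in the $U(2m+2)$-invariant case $\theta=0$ this is exactly what happens for all $s_4>\alpha_{k,0}$ --- and it is only the borderline trajectory at $s_4=\alpha_{k,\theta}>0$ that stays in the region $\mathcal{B}$ for all time and is AP with Jensen cross-section (Lemmas \ref{lem_for thm 1.2 1.3} and \ref{lem_asymp for steady in B}). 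So your claim that large $s_4$ ``drives the rescaled cross-section to the Jensen metric'' is false, and your identification of the critical solution as a limit of AP solutions (``boundary of the AP regime'') collapses with it. What the paper proves instead is that the critical trajectory avoids \emph{both} the bad invariant set $\mathcal{C}=\mathcal{RS}\cap\{Z_1\geq 1\}\cap\{X_1\geq X_2\}$ and the collapsed set $\mathcal{A}$, via two one-sided continuity-in-parameters arguments (entering $\mathcal{C}$, or entering $\mathcal{A}$ through $\{X_1-X_2=0\}$, are open conditions); staying in $\mathcal{B}$ then yields AP/Jensen asymptotics by a separate $\omega$-limit analysis, not by proximity to other AP solutions.

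Two further gaps. First, the completeness mechanism for large $s_4$ is not ``keeping $(a,b,c)$ positive'': the failure mode is the fibre overtaking the base, and what must be shown is the a priori bound $Z_1=a^2/b^2\leq 1$. The paper's Lemma \ref{lem_long exiting xik} does this quantitatively: using the monotone quantity $(1-H)/\sqrt{-Q}$ of Proposition \ref{prop_concave potential} one gets exponential decay of $\sqrt{Z_1}\,'/\sqrt{-Q}$, and integration gives $\sqrt{Z_1}\lesssim k/(\sqrt{s_4}\,C_1^2)+\sqrt{Z_1}(\eta_*)$, which is $\leq 1$ once $s_4$ is large; nothing in your sketch plays this role. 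Second, your definition $\alpha_{k,\theta}=\inf\{s_4:\xi \text{ complete}\}$ plus the claim that the completeness set is an interval rests on a ``monotone dependence on $s_4$'' that you never substantiate and that is not available. The paper sidesteps this entirely by defining $\alpha_{k,\theta}$ as the infimum of \emph{uniform thresholds} $\tilde\alpha$ such that no curve with $s_4\geq\tilde\alpha$ enters $\mathcal{C}$; completeness for every $s_4\geq\alpha_{k,\theta}$ then follows from this definition together with Lemma \ref{lem_B, C or D}, with no monotonicity needed. (Note also that the paper never proves that curves entering $\mathcal{C}$ are incomplete --- they are simply excluded from the family --- so your assertion that some warping function hits zero for $s_4<\alpha_{k,\theta}$ is neither needed nor established.)
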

For comparison, we list previous examples and new Ricci solitons in Theorem \ref{thm_1}-\ref{thm_2-2} in the following table.

\tiny
\begin{table}[H]
\centering
  \begin{tabular}{l l l l| l }
   & Metric & Asymptotic & Base & Source \\
\hline
\hline
$\xi(k,0,0,0)$  & \multirow{2}{*}{almost-Hermitian Ricci-flat}        & \multirow{2}{*}{ALC}   & \multirow{2}{*}{Fubini--Study $\mathbb{CP}^{2m+1}$} & \multirow{5}{*}{\cite{berard-bergery_sur_1982}}\\
$1\leq k\leq 2m+1$  &         &  &  & \\
\cline{1-4}
$\xi(2m+2,0,0,0)$     &almost-K\"ahler Ricci-flat & AC  & standard $\mathbb{S}^{4m+3}/\mathbb{Z}_{2m+2}$ &\\ 
\cline{1-4}
$\xi(k,0,0,s_5)$     &\multirow{2}{*}{negative Einstein}& \multirow{2}{*}{AH}  &  &\\ 
$k\geq 1$, $s_5>0$  &         &  &  & \\
\hline
$\xi(2m+2,0,s_4,0)$     & \multirow{2}{*}{steady K\"ahler--Ricci soliton}         & \multirow{2}{*}{ACP} & \multirow{2}{*}{Fubini--Study $\mathbb{CP}^{2m+1}$} &  \multirow{2}{*}{\cite{Cao_Elliptic_1996}} \\
$s_4>0$  &         &  &  & \\
\hline
$\xi(k,0,s_4,k-(2m+2))$   & \multirow{2}{*}{expanding K\"ahler--Ricci soliton}         & \multirow{2}{*}{AC} & &  \multirow{2}{*}{\cite{feldman_rotationally_2003}}\\
$k\geq 2m+3$, $s_4>0$ &&&&\\  
\hline
$\xi(k,0,s_4,0)$     & \multirow{2}{*}{steady Ricci soliton}         & \multirow{2}{*}{ACP/AP} & \multirow{2}{*}{Fubini--Study $\mathbb{CP}^{2m+1}$} &  \multirow{2}{*}{\cite{stolarski2024steady}}\\
$k\geq 1$, $s_4>0$ &&&&\\  
\hline
$\xi(k,0,s_4,0)$   & \multirow{2}{*}{steady Ricci soliton} & \multirow{2}{*}{ACP} & \multirow{2}{*}{Fubini--Study $\mathbb{CP}^{2m+1}$} &\multirow{4}{*}{\cite{wink2023cohomogeneity}} \\
$ 1\leq k\leq \sqrt{2m+2}$, $s_4>0$ &&&&\\  
\cline{1-4}
$\xi(k,0,s_4,s_5)$   & \multirow{2}{*}{expanding Ricci soliton} & \multirow{2}{*}{AC} & \\
$1\leq k\leq \sqrt{2m+2}$, $s_4>0$, $s_5>0$ &&&&\\  
\hline
$\xi(k,0,s_4,0)$     & \multirow{2}{*}{steady Ricci soliton}         & \multirow{2}{*}{ACP} & \multirow{2}{*}{Fubini--Study $\mathbb{CP}^{2m+1}$} &  \multirow{6}{*}{\cite{appleton_family_2017}}\\
$1\leq k\leq 2m+2$, $s_4>0$ &&&&\\  
\cline{1-4}
$\xi(k,0,s_4,0)$     & \multirow{2}{*}{steady Ricci soliton}        & \multirow{2}{*}{ACP} & \multirow{2}{*}{Fubini--Study $\mathbb{CP}^{2m+1}$}  \\
$k\geq 2m+3$, $s_4>\alpha_{k,0}$ &&&&\\  
\cline{1-4}
$\xi(k,0,\alpha_{k,0},0) $   & \multirow{2}{*}{steady Ricci soliton}         & \multirow{2}{*}{AP} & \multirow{2}{*}{standard $\mathbb{S}^{4m+3}/\mathbb{Z}_k$}  \\
$k\geq 2m+3$ &&&&\\  
\hline
\\
\hline
$\xi(k,\theta,0,0)$    & Ricci-flat    & ALC & non-K\"ahler $\mathbb{CP}^{2m+1}$& \multirow{1}{*}{Theorem \ref{thm_1}}  \\ 
\cline{1-4}
$\xi(k,\theta,0,s_5)$ & negative Einstein & AH & &   $k\in \{1,2\}$,\\ 
\cline{1-4}
$\xi(k,\theta,s_4,0)$ & steady Ricci soliton  & ACP &  non-K\"ahler $\mathbb{CP}^{2m+1}$ & $\theta\in (0,\pi)$,  \\ 
\cline{1-4}
$\xi(k,\theta,s_4,s_5)$ & expanding Ricci soliton  & AC & & $s_4>0$, $s_5>0$  \\ 
\hline
\\
\hline
$\xi(k,\theta,s_4,s_5)$    & expanding Ricci soliton    & AC & &  \multirow{1}{*}{Theorem \ref{thm_2-1}}\\ 
\cline{1-4}
\multirow{2}{*}{$\xi(k,\theta,\beta_{k,\theta},s_5)$}  &if $\beta_{k,\theta}>0$:  expanding Ricci soliton & AC & & $k\geq 3$, $\theta\in (0,\pi)$,  \\ 
\cline{2-4}
& if $\beta_{k,\theta}=0$: negative Einstein & AH & & $s_4>\beta_{k,\theta}$, $s_5>0$  \\ 
\hline
\\
\hline
\multirow{2}{*}{$\xi(k,\theta,\alpha_{k,\theta},0)$} & \multirow{2}{*}{if $\alpha_{k,\theta}=0$: Ricci-flat}  & AC &  Jensen $\mathbb{S}^{4m+3}/\mathbb{Z}_k$ & Theorem \ref{thm_2-2} \\ 
\cline{3-4}
 &  & ALC & non-K\"ahler $\mathbb{CP}^{2m+1}$ & $k\geq 3$,\\ 
\cline{1-4}
$\xi(k,\theta,\alpha_{k,\theta},0)$ & if $\alpha_{k,\theta}>0$: steady Ricci soliton & AP & Jensen $\mathbb{S}^{4m+3}/\mathbb{Z}_k$ & $\theta\in(0,\pi)$,\\  
\cline{1-4}
$\xi(k,\theta,s_4,0)$ & steady Ricci soliton & ACP & non-K\"ahler $\mathbb{CP}^{2m+1}$ & $s_4>\alpha_{k,\theta}$  \\  
\hline
\end{tabular}
\caption{}
\label{tab_xi_curve}
\end{table}
\normalsize

Compared to Theorem \ref{thm_1}, the asymptotics for steady solitons in Theorem \ref{thm_2-2} are less settled, as they depend on the value of $\alpha_{k,\theta}$. The boundary cases where $\theta \in \{0, \pi\}$, however, are better understood in the previous examples. Specifically, the metrics $\xi(2m+2,0,0,0)$ and $\xi(2,\pi,0,0)$ are AC Ricci-flat metrics, with the asymptotic cones' bases being the standard orbifolds $\mathbb{S}^{4m+3}/\mathbb{Z}_{2m+2}$ and $\mathbb{S}^{3}/\mathbb{Z}_2$, respectively; see \cite{berard-bergery_sur_1982}. The existence of $\alpha_{k,0}$ and $\alpha_{k,\pi}$ are established in \cite[Theorem 5.1]{appleton_family_2017}. In particular, we have $\alpha_{k,0} > 0$ if $k \geq 2m+3$, and $\alpha_{k,\pi} > 0$ if $k \geq 3$. Thanks to these results, we can further obtain the following existence theorem for some non-collapsed steady Ricci solitons.
%
%
%
%
%

\begin{theorem}
\label{thm_3}
On each $\mathcal{O}(k)$ with $3\leq k\leq 2m+1$, there exists at least one AC Ricci-flat metric, whose base of the cone is the Jensen $\mathbb{S}^{4m+3}/\mathbb{Z}_k$. On each $\mathcal{O}(k)$ with $k\geq 3$, there exist infinitely many AP steady Ricci solitons whose base of the paraboloid is the Jensen $\mathbb{S}^{4m+3}/\mathbb{Z}_k$. 
\end{theorem}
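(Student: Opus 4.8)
The plan is to deduce the whole statement from the existence function $\theta \mapsto \alpha_{k,\theta}$ supplied by Theorem \ref{thm_2-2}, together with the boundary values coming from \cite{appleton_family_2017} and \cite{berard-bergery_sur_1982}, by a continuity (intermediate value) argument in the squashing parameter $\theta$. The single analytic input I would establish first is that, for fixed $k \ge 3$, the map $\theta \mapsto \alpha_{k,\theta}$ is continuous on $(0,\pi)$, lower semicontinuous up to the endpoint $\pi$, and that the asymptotic type of the critical solution $\xi(k,\theta,\alpha_{k,\theta},0)$ varies continuously in $\theta$; both facts follow from continuous dependence of the cohomogeneity one system on $\theta$ together with the open/closed structural characterization of the ALC, AC and AP regimes along the critical trajectory. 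Throughout I use the established thresholds $\alpha_{k,0}>0$ if $k\ge 2m+3$ and $\alpha_{k,\pi}>0$ if $k\ge 3$, together with the fact that $\xi(k,0,0,0)$ is ALC Ricci-flat for $1\le k\le 2m+1$, which forces $\alpha_{k,0}=0$ in that range.

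For the infinitely many AP steady solitons, fix $k\ge 3$. Since $\alpha_{k,\pi}>0$, lower semicontinuity of $\theta\mapsto\alpha_{k,\theta}$ at the endpoint $\pi$ yields a $\theta_0<\pi$ with $\alpha_{k,\theta}>0$ for all $\theta\in(\theta_0,\pi)$. For each such $\theta$, Theorem \ref{thm_2-2}(2) produces a complete AP non-Einstein steady Ricci soliton $\xi(k,\theta,\alpha_{k,\theta},0)$ on $\mathcal{O}(k)$ whose asymptotic cross-section is the Jensen $\mathbb{S}^{4m+3}/\mathbb{Z}_k$. It then remains to check that distinct values of $\theta$ give genuinely non-homothetic metrics: the parameter $\theta$ can be recovered from the squashing ratio of the base $\mathbb{CP}^{2m+1}$ at the singular orbit, so the uncountable family $\{\theta\in(\theta_0,\pi)\}$ yields infinitely many AP steady solitons on the fixed $\mathcal{O}(k)$, as claimed.

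For the AC Ricci-flat metrics, fix $3\le k\le 2m+1$. Here $\alpha_{k,0}=0$ and the corresponding Ricci-flat metric $\xi(k,0,0,0)$ is ALC (the B\'erard-Bergery metric over the Fubini--Study base), so the critical trajectory starts in the ALC regime; at the other end $\alpha_{k,\pi}>0$, so the $s_4=0$ trajectory fails to stay complete as $\theta\to\pi$. Set $\theta^{*}=\sup\{\theta\in[0,\pi)\mid \alpha_{k,\theta}=0\}$; structural stability of the ALC end gives $\theta^{*}>0$, while $\alpha_{k,\pi}>0$ gives $\theta^{*}<\pi$. By continuity $\alpha_{k,\theta^{*}}=0$, so $\xi(k,\theta^{*},0,0)$ is complete Ricci-flat; since it lies exactly on the boundary between the ALC regime ($\theta<\theta^{*}$) and the incomplete/AP regime ($\theta>\theta^{*}$), it must be the AC separatrix rather than ALC. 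Because $\theta^{*}\in(0,\pi)$ corresponds to a genuine squashing, the only admissible Einstein cross-section is the squashed one, namely the Jensen $\mathbb{S}^{4m+3}/\mathbb{Z}_k$ (the round cross-section is forced only in the K\"ahler case $k=2m+2$, excluded here), which delivers the desired AC Ricci-flat metric.

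The main obstacle is the continuity statement and, more delicately, the identification of the asymptotic type exactly at the transition value $\theta^{*}$: one must rule out that the critical Ricci-flat solution degenerates, for instance staying ALC up to $\theta^{*}$ and then jumping to incomplete without passing through an AC solution, and confirm that it actually converges to the Jensen Einstein cone. I would handle this through a phase-plane analysis of the Ricci-flat ($s_4=0$) subsystem near its Einstein fixed points: the Jensen fixed point is expected to be a saddle whose stable manifold is precisely the AC separatrix, with the ALC and the incomplete regimes occupying the two open regions it separates, so that continuous dependence on $\theta$ forces the critical trajectory to land on this stable manifold at $\theta^{*}$. The thresholds $\alpha_{k,0}>0$ for $k\ge 2m+3$ and $\alpha_{k,\pi}>0$ for $k\ge 3$, already available from \cite{appleton_family_2017}, are exactly what pin down the range $3\le k\le 2m+1$ as the one in which the transition occurs at an interior, genuinely squashed $\theta^{*}$.
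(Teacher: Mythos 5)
Your AP half is essentially workable and close in spirit to the paper's. The precise form of ``lower semicontinuity at $\pi$'' you need is cheap: since $\alpha_{k,\pi}>0$, there is some $s'>0$ for which $\xi(k,\pi,s',0)$ enters $\mathcal{C}$, and entering the invariant set $\mathcal{C}$ is an open condition in the parameters, so $\alpha_{k,\theta}\geq s'>0$ for all $\theta$ near $\pi$; then Lemma \ref{lem_for thm 1.2 1.3} (packaged in Theorem \ref{thm_2-2}(2), which you cite) guarantees $\xi(k,\theta,\alpha_{k,\theta},0)$ stays in $\mathcal{B}$, and Lemma \ref{lem_asymp for steady in B} gives the AP asymptotics over the Jensen sphere, with distinctness for distinct $\theta$ as you say. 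The paper's Lemma \ref{lem_for thm 1.4} reaches the same conclusion by a connectedness argument along the segments joining $(\pi,0)$ to $(\theta,\alpha)$, $\alpha=\max_\theta\alpha_{k,\theta}$, in the $(\theta,s_4)$-plane, using the trichotomy of Lemma \ref{lem_B, C or D}; notably, it never needs, and never proves, continuity of $\theta\mapsto\alpha_{k,\theta}$.

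The AC Ricci-flat half has a genuine gap: your transition value $\theta^{*}=\sup\{\theta\mid\alpha_{k,\theta}=0\}$ is built from the wrong object. The condition $\alpha_{k,\theta}>0$ only says that \emph{some} $s_4>0$ makes $\xi(k,\theta,s_4,0)$ enter $\mathcal{C}$ (no monotonicity in $s_4$ is available); it says nothing about the $s_4=0$ trajectory. So your dichotomy ``ALC for $\theta<\theta^{*}$, incomplete for $\theta>\theta^{*}$'' fails on both sides: $\xi(k,\theta,0,0)$ may enter $\mathcal{A}$ (hence be ALC) for $\theta$ on both sides of $\theta^{*}$ while $\alpha_{k,\theta}>0$, nothing rules out that $\xi(k,\theta^{*},0,0)$ itself enters $\mathcal{A}$, and even your step ``$\xi(k,0,0,0)$ ALC forces $\alpha_{k,0}=0$'' is an invalid inference (it is true for $k\leq 2m+1$, but by \cite{appleton_family_2017}, not from the $s_4=0$ curve alone). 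The continuity of $\theta\mapsto\alpha_{k,\theta}$ you lean on is also unestablished: only ``enters $\mathcal{A}$'' and ``enters $\mathcal{C}$'' are open conditions, while the uniform-in-$s_4$ avoidance of $\mathcal{C}$ defining $\alpha_{k,\theta}$ is of closed type, so upper semicontinuity is unclear. The paper's fix is to run the intermediate value argument directly on the $s_4=0$ slice: by Proposition \ref{prop_where do xi go}, $\xi(k,0,0,0)$ enters $\mathcal{A}$ for $3\leq k\leq 2m+1$ and $\xi(k,\pi,0,0)$ enters $\mathcal{C}$; both are open in $\theta$, so connectedness of $[0,\pi]$ yields $\theta_\star$ with $\xi(k,\theta_\star,0,0)$ trapped in the compact invariant-complement $\mathcal{B}$, hence defined on $\mathbb{R}$. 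Finally, your saddle/separatrix identification of the Jensen cone is asserted rather than proved, and the remark that the round cross-section is ``forced only in the K\"ahler case $k=2m+2$'' is off: the round cone is what occurs on the Fubini--Study slice $\theta=0$ for any $k$, and excluding it at squashed $\theta_\star$ requires an argument — in the paper, the monotone quantity $Z_2^{2m+3}Z_3^{2m}/Z_1$ confines the $\omega$-limit to $\{p_1,p_2\}$ (Proposition \ref{prop_only two points in the omega set}), and the barrier computation \eqref{eqn_from key_computation 0} in Proposition \ref{prop_going to AC-2} excludes the round point $p_1$ whenever $X_2-X_3\not\equiv 0$.
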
 

This article is structured as follows. We first present the cohomogeneity one Ricci soliton equation and introduce a coordinate transformation to compactify the phase space. In the present setting, the compactified system has a non-hyperbolic critical point corresponding to the singular orbit, and therefore the local existence requires a central manifold analysis. This is in contrast with our previous work \cite{chi_Non-Shrinking_2024}, where the corresponding critical point is hyperbolic.

For the global analysis, a key feature of the present construction is that the topology parameter $k$ leads to different dynamical behaviors of the integral curves. When $k\in\{1,2\}$, the integral curves can be controlled by extending the compact invariant set constructed in \cite{chi_Non-Shrinking_2024}, which yields complete collapsed steady Ricci solitons in Theorem \ref{thm_1}. On the other hand, non-collapsed solitons occur for $k\geq 3$, where the integral curves becomes substantially different. In this case, we generalize \cite[Theorem 5.1]{appleton_family_2017} and apply a shooting argument to establish the existence and asymptotic behavior of these solitons, proving Theorems \ref{thm_2-1}-\ref{thm_3}.

\section{Cohomogeneity one equation}
We first describe the cohomogeneity one structure of the complex line bundles $\mathcal{O}(k)$ and present the corresponding Ricci soliton equations. Let $Q$ be the inner product on $\mathfrak{g}$ that generates the round metric on $\mathbb{S}^{4m+3}$. We have a $Q$-orthogonal decomposition of the isotropy representation $\mathfrak{g}/\mathfrak{k}$ into three irreducible summands. Consider the ansatz 
\begin{equation}
\label{eqn_coho1metric}
g=dt^2+a^2(t)\left.Q\right|_{\mathbf{1}}+b^2(t)\left.Q\right|_{\mathbf{2}}+c^2(t)\left.Q\right|_{\mathbf{4m}},
\end{equation}
where $b$ and $c$ are metric components for the base space $\mathbb{CP}^{2m+1}$ and $a$ is for the $\mathbb{S}^1$-fiber. Since the principal orbit is locally isometric to the sphere, the intrinsic Ricci curvature have the same expressions as in the previous setting. Therefore, the cohomogeneity one Ricci soliton equations take the same form as in \cite{chi_Non-Shrinking_2024}, namely,
\begin{equation}
\label{eqn_cohomogeneity_one_soliton_equation}
\begin{split}
\frac{\ddot{a}}{a}-\left(\frac{\dot{a}}{a}\right)^2&=-\left(\frac{\dot{a}}{a}+2\frac{\dot{b}}{b}+4m\frac{\dot{c}}{c}\right)\frac{\dot{a}}{a}+2 \frac{a^2}{b^4}+4 m \frac{a^2}{c^4}+\frac{\dot{a}}{a}\dot{f}+\frac{\epsilon}{2},\\
\frac{\ddot{b}}{b}-\left(\frac{\dot{b}}{b}\right)^2&=-\left(\frac{\dot{a}}{a}+2\frac{\dot{b}}{b}+4m\frac{\dot{c}}{c}\right)\frac{\dot{b}}{b}+\frac{4}{b^2}-2\frac{a^2}{b^4}+ 4m \frac{b^2}{c^4}+\frac{\dot{b}}{b}\dot{f}+\frac{\epsilon}{2},\\
\frac{\ddot{c}}{c}-\left(\frac{\dot{c}}{c}\right)^2&=-\left(\frac{\dot{a}}{a}+2\frac{\dot{b}}{b}+4m\frac{\dot{c}}{c}\right)\frac{\dot{c}}{c}+\frac{4m+8}{c^2}-2\frac{a^2}{c^4}-4\frac{b^2}{c^4}+\frac{\dot{c}}{c}\dot{f}+\frac{\epsilon}{2},\\
\ddot{f}+\frac{\epsilon}{2}&=\frac{\ddot{a}}{a}+2\frac{\ddot{b}}{b}+4m\frac{\ddot{c}}{c}.
\end{split}
\end{equation}
Despite the identical dynamical system, the topology of the underlying manifold, encoded by $k$, changes the smoothness conditions at the singular orbit and global behavior of admissible integral curves.

We set $f(0)=0$ without loss of generality. By \cite{buzano_initial_2011}, the potential function $f$ is an even function in $t$ near $0$. As the $\mathbb{S}^1$-fiber collapses at the singular orbit, we have $\trace(L)=\frac{1}{t}+O(t)$ as $t\to 0$. Therefore, we have $2\ddot{f}(0)=C$ by\eqref{eqn_potential_equation}. As the radius for the $\mathbb{S}^1$-fiber of $\mathbb{S}^{4m+3}/\mathbb{Z}_k$ is $\frac{1}{k}$, we require $\dot{a}(0)=k$ for smooth extension and we consider the initial condition
\begin{equation}
\label{eqn_initial_condition_G/L}
\lim_{t\to 0} (a,b,c,f,\dot{a},\dot{b},\dot{c})=\left(0,b_0,c_0,0,k,0,0\right),\quad b_0, c_0>0.
\end{equation}
With each fixed $k$, parameters $b_0$, $c_0$ and $C$ are free. The equation \eqref{eqn_cohomogeneity_one_soliton_equation} is invariant under homothety change if $\epsilon=0$. Therefore, the above initial condition has two free parameters for the steady case. Note that the singular orbit is not round if $b_0\neq c_0$. 

By \cite[Theorem 20.1]{hamilton_formations_1993}, the equality
\begin{equation}
\label{eqn_potential_equation}
\ddot{f}+(\trace(L)-\dot{f})\dot{f}-\epsilon f=C
\end{equation}
holds for some constant $C$. Canceling all second order terms using \eqref{eqn_cohomogeneity_one_soliton_equation} and \eqref{eqn_potential_equation}, we obtain the following equation:
\begin{equation}
\label{eqn_cohomogneity_one_conservation_law}
\begin{split}
&\trace(L^2)+r_s+\frac{n-1}{2}\epsilon-(\dot{f}-\trace(L))^2=C+\epsilon f.
\end{split}
\end{equation}
By \cite[Corollary 2.3]{chen_strong_2009}, it can be deduced that $C+\epsilon f<0$ for a non-Einstein Ricci soliton. Adopting the coordinate change below, the dynamical system \eqref{eqn_cohomogeneity_one_soliton_equation} is transformed to one that is associated to a polynomial vector field, and the phase space is compactified by the Einstein equations.

Consider $d\eta=(\trace(L)-\dot{f})dt$. Define
\begin{equation}
\label{eqn_new_variable_for_new_coordinate}
\begin{split}
&X_1=\frac{\frac{\dot{a}}{a}}{\trace(L)-\dot{f}},\quad X_2=\frac{\frac{\dot{b}}{b}}{\trace(L)-\dot{f}},\quad X_3=\frac{\frac{\dot{c}}{c}}{\trace(L)-\dot{f}},\\
&Z_1=\frac{a^2}{b^2},\quad Z_2=\frac{\frac{1}{b^2}}{(\trace(L)-\dot{f})^2},\quad Z_3=\frac{\frac{b^2}{c^4}}{(\trace(L)-\dot{f})^2},\quad Z_4=\frac{\frac{1}{c^2}}{(\trace(L)-\dot{f})^2},\quad W=\frac{1}{(\trace(L)-\dot{f})^2}.
\end{split}
\end{equation}
Define functions on $\eta$
\begin{equation}
\label{eqn_curvature function}
\begin{split}
&R_1=2 Z_1Z_2+4m Z_1Z_3,\\
&R_2=4 Z_2-2 Z_1 Z_2+ 4m Z_3,\\
&R_3=(4m+8)Z_4-2 Z_1 Z_3-4 Z_3,\\
&R_s=R_1+2R_2+4mR_3=8Z_2+4m(4m+8)Z_4-8m Z_3-2 Z_1 Z_2-4m Z_1 Z_3,\\
&H=X_1+2X_2+4mX_3,\quad G=X_1^2+2X_2^2+4mX_3^2.
\end{split}
\end{equation}
Let $'$ denote the derivative with respect to $\eta$. The soliton equation \eqref{eqn_cohomogeneity_one_soliton_equation} becomes a polynomial system
\begin{equation}
\label{eqn_Polynomial_soliton_equation}
\begin{bmatrix}
X_1\\
X_2\\
X_3\\
Z_1\\
Z_2\\
Z_3\\
Z_4\\
W
\end{bmatrix}'=
V(X_1,X_2,X_3,Y_1,Y_2,Y_3,W):=\begin{bmatrix}
V_1\\
V_2\\
V_3\\
V_4\\
V_5\\
V_6\\
V_7\\
V_8
\end{bmatrix}
=\begin{bmatrix}
X_1(G-\frac{\epsilon}{2}W-1)+R_1+\frac{\epsilon}{2}W\\
X_2(G-\frac{\epsilon}{2}W-1)+R_2+\frac{\epsilon}{2}W\\
X_3(G-\frac{\epsilon}{2}W-1)+R_3+\frac{\epsilon}{2}W\\
2Z_1(X_1-X_2)\\
2Z_2(G-\frac{\epsilon}{2}W-X_2)\\
2Z_3(G-\frac{\epsilon}{2}W+X_2-2X_3)\\
2Z_4(G-\frac{\epsilon}{2}W-X_3)\\
2W(G-\frac{\epsilon}{2}W)
\end{bmatrix}
\end{equation}
Define the polynomial $Q:= G+R_s+(n-1)\frac{\epsilon}{2}W-1$.
The conservation law \eqref{eqn_cohomogneity_one_conservation_law} becomes
\begin{equation}
\label{eqn_new_cohomogneity_one_conservation_law}
\begin{split}
\frac{C+\epsilon f}{(\trace(L)-\dot{f})^2}= Q.
\end{split}
\end{equation}
As discussed above, the quantity $C+\epsilon f$ (and hence $Q$) remains negative for a non-Einstein Ricci soliton. We focus on the following $7$-dimensional algebraic set.
$$\mathcal{RS}:=\{Q\leq 0\}\cap\{H\leq 1\}\cap\{W\geq 0\}\cap \{Z_1,Z_2,Z_3,Z_4\geq 0\}\cap\{Z_4^2=Z_2Z_3\}.$$
Since 
\begin{equation}
\label{eqn_characteriaztion_of_non_trivial_soliton}
\begin{split}
\begin{bmatrix}
Q\\
H
\end{bmatrix}'=\begin{bmatrix}
2Q(G-\frac{\epsilon}{2}W)+\epsilon(H-1)W\\
(H-1)\left(G-\frac{\epsilon}{2}W-1\right)+Q
\end{bmatrix},
\end{split}
\end{equation}
it is clear that $\mathcal{RS}$ is flow-invariant, a slight generalization to \cite[Proposition 3.1]{dancer_non-kahler_2009}. For the expanding case, we fix the homothety by setting $\epsilon=1$. For $\epsilon=0$, the homothety change is transformed to translation on the variable $\eta$ in the new coordinate. Therefore, different trajectories to system \eqref{eqn_Polynomial_soliton_equation} in $\mathcal{RS}$ represent different homothety families of Ricci solitons.

The coordinate transformation above is different from the $XYW$-coordinates used in \cite{chi_Non-Shrinking_2024}, where
\begin{equation}
\label{eqn_Y_to_Z}
Y_1=\sqrt{Z_1},\quad Y_2=\sqrt{Z_2},\quad Y_3=\sqrt{Z_3}.
\end{equation}
The motivation for introducing the $XZW$-coordinates is that the linearization of the compactified system at $p_0$ has the same positive eigenvalue, which is particularly convenient for the local analysis near the singular orbit.

Local solutions that satisfy the initial condition \eqref{eqn_initial_condition_G/L} is transformed integral curves that emanate $p_0:=(1,0,0,0,0,0,0,0,0)$. The positiviteness and finiteness of $b_0$ and $c_0$ requires the limit
\begin{equation}
\label{eqn_new initial condition for 0th derivative-1}
\lim\limits_{\eta\to-\infty}\sqrt{\frac{Z_3}{Z_2}}>0
\end{equation}
to exist. If $\epsilon=1$, the zeroth order initial condition $\lim\limits_{t\to 0}(b,c)=(b_0,c_0)$ becomes
\begin{equation}
\label{eqn_new initial condition for 0th derivative-2}
\lim\limits_{\eta\to-\infty} \frac{W}{Z_2}=b_0^2,\quad \lim\limits_{\eta\to-\infty} \frac{W}{Z_3}=\frac{c_0^2}{b_0},\quad \lim\limits_{\eta\to-\infty} \frac{W}{Z_4}=c_0^2.
\end{equation}
The first order initial condition $\lim\limits_{t\to 0}(\dot{a},\dot{b},\dot{c})=(k,0,0)$ becomes
\begin{equation}
\label{eqn_new initial condition for 1st derivative}
\lim\limits_{\eta\to-\infty}\sqrt{\frac{Z_1}{Z_2}}=k,\quad \lim\limits_{\eta\to-\infty} \frac{X_2}{\sqrt{Z_2}}=0,\quad \lim\limits_{\eta\to-\infty} \frac{X_3}{\sqrt{Z_4}}=0.
\end{equation}
Therefore, as in the previous setting, the construction of complete Ricci solitons reduces to finding integral curves emanating from $p_0$ and defined on $\mathbb{R}$.

We end this section by listing some important invariant subsets of lower dimensions below. 
\begin{enumerate}
\item
$\mathcal{E}:=\mathcal{RS}\cap\{Q=0\}\cap\{H=1\}$

As discussed above, equalities $Q=0$ and $H=1$ mean that $f$ is identically zero and $C=0$. The subsystem restricted on $\mathcal{E}$ is hence the cohomogeneity one non-positive Einstein equation. 

\item
$\mathcal{RS}_{\text{steady}}:=\mathcal{RS}\cap\{W=0\}$

The variable $W$ is decoupled if $\epsilon=0$. Hence a solution to the subsystem restricted on $\mathcal{RS}_{\text{steady}}$ represents a steady Ricci soliton even though the function $\frac{1}{(\trace(L)-\dot{f})^2}$ does not identically vanish.

\item
$\mathcal{RF}:=\mathcal{RS}_{\text{steady}}\cap \mathcal{RS}_{\text{Einstein}}$

The subsystem restricted on $\mathcal{RF}$ is the cohomogeneity one Ricci-flat equation. 

\item
$\mathcal{RS}_{\text{FS}}:=\mathcal{RS}\cap\{Z_2-Z_3=0\}\cap\{X_3-X_3=0\}$

Trajectories on this set represent metrics whose singular orbit is the Fubini--Study $\mathbb{CP}^{2m+1}$. The K\"ahler class of $\mathbb{CP}^{2m+1}$ is $k$ multiple of indivisible integral cohomology class in $H^2(\mathbb{CP}^{2m+1},\mathbb{Z})$. For integral curves that are not in $\mathcal{RS}_{\text{FS}}$, the base space is not K\"ahler.

\item
$\mathcal{RS}_{\text{KE}}:=\mathcal{RS}_{\text{FS}}\cap\{X_2^2=Z_1Z_2\}\cap\{(4m+4)Z_2+\frac{\epsilon}{2}W-X_2(1+X_1)=0\}$

Trajectories on this set represent K\"ahler--Ricci solitons; see \cite{dancer2011ricci}. Note that the base space must be the Fubini--Study $\mathbb{CP}^{2m+1}$.

\item
$\mathcal{RS}_{\text{round}}:=\mathcal{RS}\cap\{Z_1=1\}\cap\{X_1-X_2=0\}$

Trajectories on this set represent metrics with a principal orbit as a round $\mathbb{S}^3$-bundle over $\mathbb{HP}^m$.

\item
$\mathcal{RS}_{m=0} := \mathcal{RS}_{\text{steady}} \cap \{X_3, Z_3 = 0\}$.

Although integral curves in $\mathcal{RS}_{m=0}$ are geometrically degenerate when viewed as $(4m+4)$-dimensional Ricci solitons, they can be interpreted as four-dimensional steady Ricci solitons on complex line bundles over $\mathbb{CP}^{1}$. Indeed, setting $m=\epsilon=0$ in \eqref{eqn_cohomogeneity_one_soliton_equation} and ignoring the function $c$ yields precisely the system for four-dimensional steady Ricci solitons as in \cite[(2.1)--(2.3)]{appleton_family_2017}. Applying the coordinate transformation \eqref{eqn_new_variable_for_new_coordinate}, this system is equivalent to the restriction of \eqref{eqn_Polynomial_soliton_equation} to $\mathcal{RS}_{\text{steady}} \cap \mathcal{RS}_{\text{FS}}$ with $m=0$, which in turn coincides with the dynamical system on $\mathcal{RS}_{m=0}$. It is important to note that, this $m=0$ subsystem produces only steady solitons. The four-dimensional expanders studied in \cite{wang2025computer, donovan2025cohomogeneity} are hence not covered.
\end{enumerate}

\section{Local Analysis}
The linearization at $p_0$ is 
\begin{equation}
\label{eqn_linearization_for_CP2m+1}
\begin{bmatrix}
2&0&0&0&0&0&0&0\\
0&0&0&0& 4 & 4m &0&\frac{\epsilon}{2}\\
0&0&0&0&0& -4 & 4m+8 &\frac{\epsilon}{2}\\
0&0&0&2&0&0&0&0\\
0&0&0&0&2&0&0&0\\
0&0&0&0&0&2&0&0\\
0&0&0&0&0&0&2&0\\
0&0&0&0&0&0&0&2\\
\end{bmatrix}.
\end{equation}
The linearization has two zero eigenvalues.
The other eigenvalue of $2$ has multiplicity of $6$. Unstable eigenvectors are:
\scriptsize
$$
w_1=\begin{bmatrix}
-(4m+2)(2m+2)\\
2m+2\\
2m+2\\
0\\
1\\
1\\
1\\
0
\end{bmatrix},
w_2=\begin{bmatrix}
-4\\
2\\
0\\
0\\
1\\
0\\
0\\
0
\end{bmatrix},
w_3=\begin{bmatrix}
-4(m+1)^2\sqrt{2}\\
2\sqrt{2}\\
(m+2)\sqrt{2}\\
0\\
\sqrt{2}\\
0\\
\frac{\sqrt{2}}{2}\\
0
\end{bmatrix},
w_4=\begin{bmatrix}
-1\\
0\\
0\\
0\\
0\\
0\\
0\\
0
\end{bmatrix},
w_5=\begin{bmatrix}
-(4m+2)\frac{\epsilon}{2}\\
\frac{\epsilon}{2}\\
\frac{\epsilon}{2}\\
0\\
0\\
0\\
0\\
2
\end{bmatrix},
w_6=\begin{bmatrix}
0\\
0\\
0\\
1\\
0\\
0\\
0\\
0
\end{bmatrix}.
$$
\normalsize

Since $p_0$ is not hyperbolic, the Hartman--Grobman Theorem does not directly apply. We apply center manifold theory for the local analysis near $p_0$; see \cite{carr2012applications} and \cite{perko2013differential}. Consider the linear transformation
\begin{equation}
\label{eqn_linear coordinate change}
\tilde{X}_1=X_1-1,\quad B:=X_2-2Z_2-2mZ_3-\frac{\epsilon}{4}W,\quad C:=X_3+2Z_3-(2m+4)Z_4-\frac{\epsilon}{4}W.
\end{equation}
The dynamical system \eqref{eqn_Polynomial_soliton_equation} is topologically conjugate to 
\begin{equation}
\label{eqn_Polynomial_soliton_equation_2}
\begin{bmatrix}
\tilde{X}_1\\
B\\
C\\
Z_1\\
Z_2\\
Z_3\\
Z_4\\
W
\end{bmatrix}'=F(\tilde{X}_1,B,C,Z_1,Z_2,Z_3,Z_4,W)=\begin{bmatrix}
V_1\\
F_B\\
F_C\\
V_4\\
V_5\\
V_6\\
V_7\\
V_8
\end{bmatrix}=\begin{bmatrix}
V_1\\
V_2-2V_5-2mV_6-\frac{\epsilon}{4}V_8\\
V_3+2V_6-(2m+4)V_7-\frac{\epsilon}{4}V_8\\
V_4\\
V_5\\
V_6\\
V_7\\
V_8
\end{bmatrix}
\end{equation}
where $V_i$ are functions in \eqref{eqn_Polynomial_soliton_equation} with $X_i$ being functions defined by \eqref{eqn_linear coordinate change}. The linearization of \eqref{eqn_Polynomial_soliton_equation_2} at $p_0$ is 
\begin{equation}
\label{eqn_linearization_for_CP2m+1}
\begin{bmatrix}
2&0&0&0&0&0&0&0\\
0&0&0&0&0&0&0&0\\
0&0&0&0&0&0&0&0\\
0&0&0&2&0&0&0&0\\
0&0&0&0&2&0&0&0\\
0&0&0&0&0&2&0&0\\
0&0&0&0&0&0&2&0\\
0&0&0&0&0&0&0&2\\
\end{bmatrix}.
\end{equation}
By \cite[Theorem 2, Page 161]{perko2013differential}, the dynamical system \eqref{eqn_Polynomial_soliton_equation_2} is topologically conjugate to 
\begin{equation}
\label{eqn_Polynomial_soliton_equation_3}
\begin{bmatrix}
\tilde{X}_1\\
B\\
C\\
Z_1\\
Z_2\\
Z_3\\
Z_4\\
W
\end{bmatrix}'=\begin{bmatrix}
2\tilde{X}_1\\
F_B(\tilde{x}_1,B,C,z_1,z_2,z_3,z_4,w)\\
F_C(\tilde{x}_1,B,C,z_1,z_2,z_3,z_4,w)\\
2Z_1\\
2Z_2\\
2Z_3\\
2Z_4\\
2W
\end{bmatrix},
\end{equation}
where lower case letters are functions of $(B,C)$ that satisfy
\begin{align}
\label{eqn_mafan_1}
&\begin{bmatrix}
\partial_B \tilde{x}_1& \partial_C \tilde{x}_1\\
\partial_B z_1& \partial_C z_1\\
\partial_B z_2& \partial_C z_2\\
\partial_B z_3& \partial_C z_3\\
\partial_B z_4& \partial_C z_4\\
\partial_B w  & \partial_C w
\end{bmatrix}
\begin{bmatrix}
F_B(\tilde{x}_1,B,C,z_1,z_2,z_3,z_4,w)\\
F_C(\tilde{x}_1,B,C,z_1,z_2,z_3,z_4,w)
\end{bmatrix}=\begin{bmatrix}
V_1(\tilde{x}_1,B,C,z_1,z_2,z_3,z_4,w)\\
V_4(\tilde{x}_1,B,C,z_1,z_2,z_3,z_4,w)\\
V_5(\tilde{x}_1,B,C,z_1,z_2,z_3,z_4,w)\\
V_6(\tilde{x}_1,B,C,z_1,z_2,z_3,z_4,w)\\
V_7(\tilde{x}_1,B,C,z_1,z_2,z_3,z_4,w)\\
V_8(\tilde{x}_1,B,C,z_1,z_2,z_3,z_4,w)
\end{bmatrix}\\
&\begin{bmatrix}
\tilde{x}_1\\
z_1\\
z_2\\
z_3\\
z_4\\
w
\end{bmatrix}(0,0)=\begin{bmatrix}
0\\
0\\
0\\
0\\
0\\
0\\
\end{bmatrix},\quad
\begin{bmatrix}
\partial_B \tilde{x}_1& \partial_C \tilde{x}_1\\
\partial_B z_1& \partial_C z_1\\
\partial_B z_2& \partial_C z_2\\
\partial_B z_3& \partial_C z_3\\
\partial_B z_4& \partial_C z_4\\
\partial_B w  & \partial_C w
\end{bmatrix}(0,0)=\begin{bmatrix}
0&0\\
0&0\\
0&0\\
0&0\\
0&0\\
0&0\\
\end{bmatrix}
\end{align}
and can be approximated by power series in $(B,C)$.

With functions $F_B$ and $F_C$ explicitly known in \eqref{eqn_Polynomial_soliton_equation_2}, we claim that
\begin{equation}
\label{eqn_all_vanish_1}
z_2=0, z_3=0, z_4=0, w=0
\end{equation}
as power series of $(B,C)$. Suppose otherwise, we have
$$
\begin{bmatrix}
F_B\\
F_C
\end{bmatrix}\sim \begin{bmatrix}
4 z_2+4m z_3 + \frac{\epsilon}{2}w\\
(4m + 8)z_4 - 4 z_3 + \frac{\epsilon}{2}w
\end{bmatrix},\quad \begin{bmatrix}
V_5\\
V_6\\
V_7\\
V_8
\end{bmatrix}\sim \begin{bmatrix}
2z_2\\
2z_3\\
2z_4\\
2w
\end{bmatrix}
$$
By the last four lines in \eqref{eqn_mafan_1}, the lowest power on the left hand side is strictly larger than the lowest power on the right hand side. Therefore, we must have power series \eqref{eqn_all_vanish_1}. Furthermore, we have 
$$
\begin{bmatrix}
F_B\\
F_C
\end{bmatrix}\sim \begin{bmatrix}
2\tilde{x}_1B\\
2\tilde{x}_1C
\end{bmatrix},\quad \begin{bmatrix}
V_1\\
V_4
\end{bmatrix}\sim \begin{bmatrix}
2\tilde{x}_1\\
2z_1
\end{bmatrix}
$$

Note that the lowest powers of $\tilde{x}_1B$ and $\tilde{x}_1 C$ are at least $3$. From the first two lines in \eqref{eqn_mafan_1}, we further conclude that $\tilde{x}_1=0$ and $z_1=0$ as power series of $(B,C)$. Therefore, the dynamical system \eqref{eqn_Polynomial_soliton_equation} is topologically conjugate to 
\begin{equation}
\label{eqn_Polynomial_soliton_equation_3}
\begin{bmatrix}
\tilde{X}_1\\
B\\
C\\
Z_1\\
Z_2\\
Z_3\\
Z_4\\
W
\end{bmatrix}'=\begin{bmatrix}
2\tilde{X}_1\\
B(2B^2+4m C^2)\\
C(2B^2+4m C^2)\\
2Z_1\\
2Z_2\\
2Z_3\\
2Z_4\\
2W
\end{bmatrix},
\end{equation}
whose general solutions are
\begin{equation}
\label{eqn_partly linear solution}
\begin{bmatrix}
\tilde{X}_1\\
B\\
C\\
Z_1\\
Z_2\\
Z_3\\
Z_4\\
W
\end{bmatrix}=\begin{bmatrix}
u_1e^{2\eta}\\
B_0\frac{1}{\sqrt{a_1-\eta}}\\
C_0\frac{1}{\sqrt{a_2-\eta}}\\
u_4e^{2\eta}\\
u_5e^{2\eta}\\
u_6e^{2\eta}\\
u_7e^{2\eta}\\
u_8e^{2\eta}\\
\end{bmatrix}.
\end{equation}
In particular, integral curves with non-zero $B_0$ or $C_0$ are center manifolds of $p_0$. These integral curves are geometrically degenerate. Specifically, by \eqref{eqn_new initial condition for 1st derivative} and the existence of \eqref{eqn_new initial condition for 0th derivative-2}, we have 
\begin{equation}
\begin{split}
\lim\limits_{\eta\to-\infty}\frac{B}{\sqrt{Z_2}}&=\lim\limits_{\eta\to-\infty}\frac{X_2-2Z_2-2mZ_3-\frac{\epsilon}{4}W}{\sqrt{Z}_2}=0,\\
\lim\limits_{\eta\to-\infty}\frac{C}{\sqrt{Z_4}}&=\lim\limits_{\eta\to-\infty}\frac{X_3+2Z_3-(2m+4)Z_4-\frac{\epsilon}{4}W}{\sqrt{Z_4}}=0.
\end{split}
\end{equation}
By \eqref{eqn_partly linear solution}, the above limits hold only if $B_0=C_0=0$. Therefore, we only consider integral curves in the unstable manifold of $p_0$. By \cite[Chapter 13, Theorem 4.5]{coddington_theory_1955} and the analysis above, there is no ambiguity to let $\xi(s_1,s_2,s_3,s_4,s_5,s_6)$ denote the integral curve of \eqref{eqn_Polynomial_soliton_equation}, where
\begin{equation}
\label{eqn_genearllinarized_Pcp}
\begin{split}
&\xi(s_1,s_2,s_3,s_4,s_5,s_6)\\
&= p_0+s_1e^{2\eta}w_1+s_2e^{2\eta}w_2+s_3e^{2\eta}w_3+s_4e^{2\eta}w_4+s_5e^{2\eta}w_5+s_6e^{2\eta}w_6+O(e^{(2+\delta)\eta}).
\end{split}
\end{equation}

The first three parameters control how the principal orbit is squashed. We fix $s_1^2+s_2^2+s_3^2=1$ to mod out the scalar multiplication of the vector $\sum_{i=1}^{6}s_iw_i$. As $p_0$ is at the tip of the constraint cone $\{Z_4^2=Z_2Z_3\}$, the normalized initial velocity must be tangent to the cone. Since
\begin{equation}
Z_2\sim \left(s_1+s_2+\sqrt{2}s_3\right)e^{2\eta},\quad Z_3\sim s_1 e^{2\eta},\quad Z_4\sim \left(s_1+\frac{\sqrt{2}}{2}s_3\right)e^{2\eta}
\end{equation}
near $p_0$, together with the constraint $Z_2Z_3=Z_4^2$, we have 
\begin{equation}
\label{eqn_parameter restriction}
s_1+s_2+\sqrt{2}s_3\geq 0,\quad s_1\geq 0,\quad  s_1+\frac{\sqrt{2}}{2}s_3\geq 0,\quad  s_3^2=2s_1s_2.
\end{equation}
We further impose the condition that $s_2+2s_3\geq 0$ so that $Z_2-Z_3\geq 0$ initially. Combining all the above conditions, the parameters $(s_1,s_2,s_3)$ are chosen from the circular arc
\begin{equation}
\label{eqn_parameter circle}
\begin{split}
&\left\{(s_1,s_2,s_3)\mid s_1^2+s_2^2+s_3^2=1,\quad s_3^2=2s_1s_2,\quad s_1,s_2,s_3\geq 0 \right\}\\
&=\left\{(s_1,s_2,s_3)\mid (s_1-s_2)^2+2s_3^2=1,\quad s_1+s_2=1,\quad s_3\geq 0 \right\}.
\end{split}
\end{equation}
On the other hand, the smoothness condition requires $\lim\limits_{t\to 0} \dot{a}= k$. By \eqref{eqn_new initial condition for 1st derivative}, we must set $s_6=k^2 (s_1+s_2+\sqrt{2}s_3).$ Parametrize \eqref{eqn_parameter circle} by
\begin{equation}
\label{eqn_s1s2s3 in theta}
s_1=\frac{1+\cos(\theta)}{2},\quad s_2=\frac{1-\cos(\theta)}{2},\quad s_3=\frac{1}{\sqrt{2}}\sin(\theta),\quad \theta\in[0,\pi].
\end{equation}
We write 
$$w(\theta,k)= \frac{1+\cos(\theta)}{2}w_1+\frac{1-\cos(\theta)}{2}w_2+\frac{1}{\sqrt{2}}\sin(\theta)w_3+k^2\left(1+\sin(\theta)\right) w_6.$$

All unstable eigenvectors are perpendicular to $(\nabla W)(p_0)$ except $w_4$. Steady Ricci solitons are represented by integral curves with $s_5=0$. Expanding Ricci solitons are represented by integral curves with $s_5> 0$. A straightforward computation shows that
$$
(\nabla Q)(p_0)=\begin{bmatrix}
2\\
0\\
0\\
0\\
8\\
-8m\\
4m(4m+8)\\
(n-1)\frac{\epsilon}{2}
\end{bmatrix}.
$$
Note that $w_4$ is the only unstable eigenvectors that is not perpendicular to $(\nabla Q)(P_{\mathbb{CP}^{2m+1}})$ or $(\nabla H)(P_{\mathbb{CP}^{2m+1}})$. By \eqref{eqn_genearllinarized_Pcp_2}, we have
\begin{equation}
\label{eqn_Q and 1-H}
Q\sim -2s_4 e^{2\eta},\quad 1-H\sim s_4 e^{2\eta}.
\end{equation}
Therefore, 
\begin{equation}
\label{eqn_linearized Q-H+1}
\frac{\ddot{f}}{(\trace(L)-\dot{f})^2}=Q-H+1\sim -s_4 e^{2\eta}
\end{equation}
as $\eta\to-\infty$. The parameter $s_4$ controls the initial condition $\ddot{f}(0)$ and we set $s_4\geq 0$. If integral curves with $s_4$ represent cohomogeneity one Einstein metrics.

In summary, we consider the following $4$-parameter family of integral curve such that
\begin{equation}
\label{eqn_genearllinarized_Pcp_2}
\begin{split}
\xi(k, \theta, s_4,s_5)= p_0+e^{2\eta}w(\theta,k)+s_4e^{2\eta}w_4+s_5e^{2\eta}w_5+O(e^{(2+\delta)\eta}),\quad \theta\in [0,\pi], k\in \mathbb{Z}^+, s_4,s_5\geq 0.
\end{split}
\end{equation}
For $\theta\in [0,\pi)$, these integral curves are identified as local solutions on a tubular neighborhood around $\mathbb{CP}^{2m+1}$. For $\theta=\pi$, the integral curves are in $\mathcal{RS}_{m=0}$, and they represent local solutions near $\mathbb{CP}^1$. The extra parameter $k$ creates the illusion of an additional degree of freedom to extend the local solution smoothly to the base space. However, for each fixed $\mathcal{O}(k)$, the smoothness condition only allows varying three parameters from $(\theta,s_4,s_5)$, as predicted in \cite{eschenburg_initial_2000}. 

\begin{remark}
Unlike the 1-parameter family of  K\"ahler--Ricci solitons on $\mathbb{C}^{2m+2}$ studied in \cite{cao1997limits}, the Ricci solitons represented by \eqref{eqn_genearllinarized_Pcp_2} have mixed signs in their sectional curvatures. Specifically, consider the normal-tangential sectional curvatures $K_i:=X_i-X_i^2-R_i-\frac{\epsilon}{2}W$. We observe that $K_1>0$, while $K_2,K_3<0$ initially along each $\xi(k,\theta,s_4,s_5)$. The non-negativity of sectional curvatures significantly constrains the topology of steady gradient Ricci solitons; for a detailed discussion, see the recent developments in \cite{deng2024fundamental}.
\end{remark}

\section{Global analysis}
In this section, we first construct a compact invariant set $\mathcal{F}$ to prove the completeness of Ricci solitons in Theorem \ref{thm_1}; see \eqref{eqn_setF}. Each $\xi(k, \theta, s_4,s_5)$ with $k\geq 3$ is not in $\mathcal{F}$ initially. Therefore, we generalize \cite[Theorem 5.1]{appleton_family_2017} to establish the existence of $\beta_{\theta,k}$ and $\alpha_{\theta,k}$ in Theorem \ref{thm_2-1}-\ref{thm_2-2}. By the continuous dependence of parameters, we also establish the existence of complete non-collapsed steady Ricci solitons in Theorem \ref{thm_3}.

Define the function 
$$
F_{l}:=X_2-X_1+l\left(\sqrt{\frac{Z_2}{Z_1}}-\sqrt{Z_1Z_2}\right)
$$
for some constant $l$. We have 
\begin{equation}
\label{eqn_derivative of Fl}
\begin{split}
\langle \nabla F_l,  V\rangle &=F_l\left(G-\frac{\epsilon}{2}W-1+2l\sqrt{Z_1Z_2}\right)\\
&\quad +\left(l\sqrt{\frac{Z_2}{Z_1}}(1-X_1)+(4-2l^2)Z_2+4m Z_3 \right)(1-Z_1).
\end{split}
\end{equation}
On $\mathcal{RS}\cap \{F_l=0\}\cap \{Z_2-Z_3\geq 0\}$, we have 
\begin{equation}
\label{eqn_conservation with Fl=0}
\begin{split}
1&\geq G+R_s\\
&= X_1^2+2\left(X_1-l\left(\sqrt{\frac{Z_2}{Z_1}}-\sqrt{Z_1Z_2}\right)\right)^2+4mX_3^2\\
&\quad +8Z_2+4m(4m+8)Z_4-8m Z_3-2 Z_1 Z_2-4m Z_1 Z_3
\end{split}
\end{equation}

\begin{proposition}
\label{prop_F2}
The inequality $\left.\langle \nabla F_2,  V\rangle \right|_{\{F_2=0\}} \geq 0$ holds on 
$$\mathcal{RS}\cap \{1-Z_1\geq 0\}\cap \{F_2=0\}\cap \{Z_2-Z_3\geq 0\}\cap \{X_1\geq 0\}$$
\end{proposition}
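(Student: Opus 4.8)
The plan is to evaluate the derivative formula \eqref{eqn_derivative of Fl} at $l=2$ on the locus $\{F_2=0\}$, where its first term drops out, and then reduce the statement to a single scalar inequality that is forced by the conservation law \eqref{eqn_conservation with Fl=0}. Setting $l=2$ and using that $F_2$ vanishes, I obtain
\[
\left.\langle \nabla F_2, V\rangle\right|_{\{F_2=0\}}=\left(2\sqrt{\tfrac{Z_2}{Z_1}}(1-X_1)-4Z_2+4mZ_3\right)(1-Z_1).
\]
Since $1-Z_1\geq 0$ on the prescribed set, it suffices to prove that the bracketed factor, call it $T$, is non-negative. I abbreviate $u=\sqrt{Z_2/Z_1}$, so that $Z_2=u^2Z_1$ and $\sqrt{Z_1Z_2}=uZ_1$; the degenerate boundary cases $Z_1Z_2=0$ give $T=0$ directly (when $Z_2=0$ the constraint $Z_2-Z_3\geq 0$ forces $Z_3=0$), so I may assume $u>0$.

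The key step is a square completion. Using $F_2=0$, i.e. $X_2=X_1-2u(1-Z_1)$, I would verify the identity
\[
X_1^2+2X_2^2+8Z_2-2Z_1Z_2=P^2+2\bigl(X_1-(2-Z_1)u\bigr)^2,\qquad P:=X_1+2\sqrt{Z_1Z_2}.
\]
Substituting this into the conservation inequality \eqref{eqn_conservation with Fl=0} (which gives $G+R_s\leq 1$) and discarding the non-negative squares $2(X_1-(2-Z_1)u)^2$ and $4mX_3^2$ yields
\[
P^2\leq 1-4m(4m+8)Z_4+8mZ_3+4mZ_1Z_3.
\]
Now I invoke the cone constraint $Z_4^2=Z_2Z_3$ together with $Z_2-Z_3\geq 0$, which give $Z_4=\sqrt{Z_2Z_3}\geq Z_3$. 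Combined with $Z_1\leq 1$, the right-hand side is bounded above by $1-4m(4m+5)Z_3\leq 1$, so $P^2\leq 1$ and hence $P\leq 1$. A direct rearrangement gives $T=2u(1-P)+4mZ_3$, which is non-negative because $u\geq 0$, $P\leq 1$, and $Z_3\geq 0$. The hypothesis $X_1\geq 0$ additionally guarantees $P\geq 0$, so in fact $0\leq P\leq 1$ throughout.

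I expect the main obstacle to be the control of $T$ for large $u=\sqrt{Z_2/Z_1}$: the negative contribution $-4Z_2$ inside $T$, and the negative terms $-8mZ_3-4mZ_1Z_3$ inside the conservation law, appear to overwhelm the helpful terms under any naive term-by-term estimate. What resolves this is that the square-completion identity isolates exactly the combination $P=X_1+2\sqrt{Z_1Z_2}$ that the conservation law bounds, so that the whole problem collapses to $P\leq 1$; and the inequality $Z_4\geq Z_3$ — where the hypotheses $Z_2-Z_3\geq 0$ and $Z_4^2=Z_2Z_3$ enter decisively — absorbs all the dangerous $Z_3$ terms with room to spare. Finding the correct completion of squares and recognizing the role of $Z_4\geq Z_3$ is the crux; once these are in hand, the remaining estimates are routine.
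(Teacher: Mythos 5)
Your proposal is correct and takes essentially the same route as the paper's proof: the same evaluation of \eqref{eqn_derivative of Fl} at $l=2$ on $\{F_2=0\}$, the same completion of squares (your $P^2+2\bigl(X_1-(2-Z_1)u\bigr)^2$ is literally the paper's $(X_1+2\sqrt{Z_1Z_2})^2+2\bigl(X_1-2\sqrt{Z_2/Z_1}+\sqrt{Z_1Z_2}\bigr)^2$ in different notation), and the same conclusion $X_1+2\sqrt{Z_1Z_2}\leq 1$ extracted from the conservation law \eqref{eqn_conservation with Fl=0}. The only difference is that you make explicit the step the paper leaves implicit, namely discarding $4m(4m+8)Z_4-8mZ_3-4mZ_1Z_3\geq 0$ via $Z_4=\sqrt{Z_2Z_3}\geq Z_3$ and $Z_1\leq 1$, which is a clarification rather than a new idea.
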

\begin{proof}
By inequalities $1-Z_1\geq 0$ and $Z_2-Z_3\geq 0$, the inequality \eqref{eqn_conservation with Fl=0} with $l=2$ becomes
\begin{equation}
\begin{split}
1&\geq X_1^2+2\left(X_1-2\left(\sqrt{\frac{Z_2}{Z_1}}-\sqrt{Z_1Z_2}\right)\right)^2 +8Z_2-2 Z_1 Z_2\\
&= 2\left(X_1-2\sqrt{\frac{Z_2}{Z_1}}+\sqrt{Z_1Z_2}\right)^2+(X_1+2\sqrt{Z_1Z_2})^2\\
&\geq (X_1+2\sqrt{Z_1Z_2})^2.
\end{split}
\end{equation}
Therefore, the inequality $1\geq X_1+2\sqrt{Z_1Z_2}$ holds on $\mathcal{RS}\cap \{1-Z_1\geq 0\}\cap \{F_2=0\}\cap \{Z_2-Z_3\geq 0\}\cap \{X_1\geq 0\}$. By \eqref{eqn_derivative of Fl}, we have 
\begin{equation}
\begin{split}
\left.\langle \nabla F_2,  V\rangle \right|_{\{F_2=0\}}&=\left(2\sqrt{\frac{Z_2}{Z_1}}(1-X_1)-4 Z_2+4m Z_3 \right)(1-Z_1)\\
&\geq \left(1-X_1-2\sqrt{Z_1Z_2}\right)2\sqrt{\frac{Z_2}{Z_1}}(1-Z_1)\\
&\geq 0.
\end{split}
\end{equation}
The proof is complete.
\end{proof}


\subsection{$k\leq 2$}

We proceed to prove that each $\xi(\theta,k,s_4,s_5)$ with $k\in\{1,2\}$ is defined on $\mathbb{R}$. Define 
\begin{equation}
\label{eqn_setF}
\begin{split}
\mathcal{F}&:=\mathcal{RS}\cap \{1-Z_1\geq 0\}\cap \left\{F_2\geq 0\right\}\\
&\quad \cap \{Z_2-Z_3\geq 0\}\cap \left\{2(\sqrt{Z_2}-\sqrt{Z_3})+X_3-X_2\geq 0\right\}\cap \{X_1,X_2,X_3\geq 0\}.
\end{split}
\end{equation}

As discussed in the introduction, the cases $k\in\{1,2\}$ exhibit relatively simpler dynamical behavior, similar to the setting considered in \cite{chi_Non-Shrinking_2024}. In this case, the compact set constructed above is sufficient to obtain complete collapsed steady Ricci solitons.

\begin{lemma}
\label{lem_invariant_setB}
The set $\mathcal{F}$ is compact and invariant.
\end{lemma}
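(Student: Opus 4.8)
The plan is to prove compactness by establishing closedness together with explicit bounds on every coordinate, and to prove invariance by the Nagumo subtangency criterion. Since $\mathcal{RS}$ is already flow-invariant, it suffices to check that each of the five additional constraint functions cutting out $\mathcal{F}$ has nonnegative $\eta$-derivative on its own zero-face, intersected with the rest of $\mathcal{F}$. Under the substitution $Y_i=\sqrt{Z_i}$ of \eqref{eqn_Y_to_Z} this parallels the argument in \cite{chi_Non-Shrinking_2024}, but I would carry it out directly in the $XZW$-coordinates.

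For compactness, closedness is clear as $\mathcal{F}$ is an intersection of sub- and super-level sets of continuous functions (with $F_2$ read on $\{Z_1>0\}$, where the relevant trajectories live by the local analysis). For boundedness, $\{H\le 1\}$ with $X_1,X_2,X_3\ge 0$ forces $0\le X_1\le 1$, $0\le X_2\le \tfrac12$, $0\le X_3\le \tfrac1{4m}$, and $0\le Z_1\le 1$ is immediate. Since $Q\le 0$, $W\ge 0$, $\epsilon\ge 0$ give $G+R_s\le 1$ and hence $R_s\le 1$, I would bound the remaining $Z$'s by showing, using $Z_1\le 1$, $Z_2\ge Z_3$ and $Z_4=\sqrt{Z_2Z_3}$, that
\[
R_s\ \ge\ 6Z_2-12mZ_3+4m(4m+8)\sqrt{Z_2Z_3}\ =\ Z_2\,\phi\!\big(Z_3/Z_2\big),
\]
where $\phi(\tau)=6-12m\tau+4m(4m+8)\sqrt{\tau}$ is increasing on $[0,1]$ (a downward parabola in $\sqrt{\tau}$ with vertex at $\tfrac{2m+4}{3}>1$), so $\phi\ge\phi(0)=6$ and $Z_2\le\tfrac16$; then $Z_3\le Z_2$ and $Z_4\le Z_2$ finish the $Z$'s. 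Finally, for $\epsilon=1$ the term $(n-1)\tfrac\epsilon2 W$ in $Q\le 0$ together with $G,R_s\ge 0$ gives $W\le\tfrac{2}{n-1}$; for $\epsilon=0$ the variable $W$ decouples and the steady trajectories lie in $\{W=0\}$, so one uses the compact slice $\mathcal{F}\cap\{W=0\}$.

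For invariance, the faces $\{X_1=0\}$, $\{X_2=0\}$, $\{X_3=0\}$ are routine: there $V_i=R_i+\tfrac\epsilon2 W$, and $Z_1\le 1$, $Z_2\ge Z_3$, $Z_4\ge Z_3$ make each of $R_1,R_2,R_3$ (plus $\tfrac\epsilon2 W\ge 0$) nonnegative. On $\{Z_1=1\}$ one has $F_2=X_2-X_1$, so $F_2\ge 0$ reads $X_2\ge X_1$, exactly the sign needed for $(1-Z_1)'=2Z_1(X_2-X_1)\ge 0$. The face $\{F_2=0\}$ is Proposition~\ref{prop_F2} verbatim, whose hypotheses $1-Z_1\ge 0$, $Z_2-Z_3\ge 0$, $X_1\ge 0$ all hold on $\mathcal{F}$. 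On $\{Z_2=Z_3\}$ a direct computation gives $(Z_2-Z_3)'=4Z_2(X_3-X_2)$, and there the constraint $2(\sqrt{Z_2}-\sqrt{Z_3})+X_3-X_2\ge 0$ degenerates to $X_3\ge X_2$, supplying the sign.

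The genuinely involved face, and the step I expect to be the main obstacle, is $\{G_1=0\}$ with $G_1:=2(\sqrt{Z_2}-\sqrt{Z_3})+X_3-X_2$. Writing $p=\sqrt{Z_2}$, $q=\sqrt{Z_3}$ and differentiating, the term $(G-\tfrac\epsilon2 W)G_1$ vanishes on the face, and after inserting the face relation $X_3-X_2=2(q-p)$ and $Z_4=pq$ the derivative collapses to
\[
G_1'\big|_{\{G_1=0\}}=2(p-q)\Big(1-X_2-(2-Z_1)p+\big(2(m-1)+Z_1\big)q\Big).
\]
Because $p\ge q\ge 0$ and $2(m-1)+Z_1\ge 0$, nonnegativity reduces to the auxiliary bound $X_2+(2-Z_1)\sqrt{Z_2}\le 1$. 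Here the conservation law re-enters decisively: discarding the nonnegative pieces $X_1^2$, $4mX_3^2$ and $4m\big[(4m+8)Z_4-2Z_3-Z_1Z_3\big]$ from $G+R_s\le 1$ leaves $2X_2^2+2(4-Z_1)Z_2\le 1$, and a Cauchy--Schwarz estimate bounds $X_2+(2-Z_1)\sqrt{Z_2}$ by $\big(\tfrac12+\tfrac{(2-Z_1)^2}{2(4-Z_1)}\big)^{1/2}$; the inequality $(2-Z_1)^2\le 4-Z_1$ is equivalent to $Z_1(Z_1-3)\le 0$, which holds for $Z_1\in[0,1]$, giving the desired bound $\le 1$. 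The derivative bookkeeping on this face and the extraction of the sharp auxiliary inequality from the conservation law are the crux; everything else is routine. Once $\mathcal{F}$ is shown compact and invariant, every trajectory that enters it is defined for all $\eta\in\mathbb{R}$.
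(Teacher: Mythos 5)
Your proof is correct, and its skeleton coincides with the paper's: both proceed by Nagumo-type inward-pointing checks on each face of $\mathcal{F}$, with Proposition \ref{prop_F2} handling $\{F_2=0\}$, the constraint $2(\sqrt{Z_2}-\sqrt{Z_3})+X_3-X_2\geq 0$ supplying the sign on $\{Z_2=Z_3\}$, the constraint $F_2\geq 0$ supplying it on $\{Z_1=1\}$, and $R_i\geq 0$ handling $\{X_i=0\}$. You differ in two places. First, the paper simply asserts compactness, while you derive explicit bounds (in particular $R_s\geq Z_2\,\phi(Z_3/Z_2)\geq 6Z_2$, hence $Z_2\leq \frac{1}{6}$) and correctly flag that for $\epsilon=0$ the constraint $Q\leq 0$ does not involve $W$, so boundedness only holds on the slice $\{W=0\}$ where the steady dynamics lives --- a point the paper passes over silently. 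Second, and more substantively, on the face $\left\{2(\sqrt{Z_2}-\sqrt{Z_3})+X_3-X_2=0\right\}$ the paper factors the derivative as $(\sqrt{Z_2}-\sqrt{Z_3})K$ with $K=1+(4m-4)\sqrt{Z_3}-4\sqrt{Z_2}+2Z_1(\sqrt{Z_2}+\sqrt{Z_3})$ and outsources $K\geq 0$ to \cite[Proposition 3.2]{chi_Non-Shrinking_2024}, whereas you compute the factorization yourself, obtaining on this face
\begin{equation*}
\left(2(\sqrt{Z_2}-\sqrt{Z_3})+X_3-X_2\right)'=2\left(\sqrt{Z_2}-\sqrt{Z_3}\right)\left(1-X_2-(2-Z_1)\sqrt{Z_2}+\left(2(m-1)+Z_1\right)\sqrt{Z_3}\right),
\end{equation*}
and then prove nonnegativity of the second factor self-containedly from $Q\leq 0$, via the reduction $2X_2^2+2(4-Z_1)Z_2\leq 1$, Cauchy--Schwarz, and $Z_1(Z_1-3)\leq 0$. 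I verified your identity directly and it is the correct one; the paper's displayed factor $K$ agrees with your bracket only when $X_2=\frac{1}{2}$, so it cannot be an identity on the face as written, and your computation in effect corrects (or at least renders independent of the external reference) that display. What your route buys is a proof verifiable entirely within this paper that makes transparent exactly which defining inequalities of $\mathcal{F}$ are used; the cost is length, plus two caveats you should state explicitly: discarding the term $\left(2(m-1)+Z_1\right)\sqrt{Z_3}$ requires $m\geq 1$ (true in the paper's geometric setting, false for the degenerate $m=0$ system), and discarding $4m\left[(4m+8)Z_4-2Z_3-Z_1Z_3\right]$ requires $Z_4\geq Z_3$, which follows from $Z_2\geq Z_3$ and $Z_4^2=Z_2Z_3$ on $\mathcal{F}$.
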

\begin{proof}
With inequalities $1-Z_1\geq 0$ and $Z_2-Z_3\geq 0$, it is clear that $\mathcal{F}$ is compact. Furthermore, we have $R_2,R_3\geq 0$. Therefore, the vector field $V$ restricted on faces $\mathcal{F}\cap \left\{X_i=0\right\}$
points inward. 

For the face $\mathcal{F}\cap \{Z_2-Z_3=0\}$ and $\mathcal{F}\cap \{Z_1=1\}$, we have 
\begin{equation}
\label{eqn_Z2 Z3}
\left.\left\langle\nabla \left( Z_2-Z_3\right),V\right\rangle\right|_{\{Z_2-Z_3=0\}}=4Z_2(X_3-X_2)\geq -8Z_2(\sqrt{Z_2}-\sqrt{Z_3})=0,
\end{equation}
\begin{equation}
\label{eqn_Z1}
\begin{split}
\left.\left\langle\nabla Z_1,V\right\rangle\right|_{\{Z_1=1\}}&=2Z_1(X_1-X_2)\leq 2Z_1\left(\sqrt{\frac{Z_2}{Z_1}}-\sqrt{Z_1Z_2}\right)=0.
\end{split}
\end{equation}
Therefore, the vector field $V$ restricted on these faces points inward. For the face $\mathcal{F}\cap \left\{2(\sqrt{Z_2}-\sqrt{Z_3})+X_3-X_2=0\right\},$ we have 
\begin{equation}
\label{eqn_Z2 Z3 X2 X3}
\left. \left\langle \nabla \left(2(\sqrt{Z_2}-\sqrt{Z_3})+X_3-X_2\right),V \right\rangle \right|_{\mathcal{F}\cap \left\{2(\sqrt{Z_2}-\sqrt{Z_3})+X_3-X_2=0\right\}}=(\sqrt{Z_2}-\sqrt{Z_3})K,
\end{equation}
where
$$K:=1+(4m-4)\sqrt{Z_3}-4\sqrt{Z_2}+2Z_1(\sqrt{Z_2}+\sqrt{Z_3}).$$
The non-negativity of the factor $K$ on $\mathcal{RS}\cap \{1-Z_1\geq 0\}\cap \{X_3\geq 0\}\cap \{Z_2-Z_3\geq 0\}$ is established in \cite[Proposition 3.2]{chi_Non-Shrinking_2024}. Finally, by Proposition \ref{prop_F2}, we have $\left.\langle \nabla F_2,  V\rangle \right|_{\mathcal{F}\cap \{F_2=0\}}\geq 0$. Therefore, the set $\mathcal{F}$ is compact and invariant.
\end{proof}

In the following proposition, we show that the level sets $\{F_l=0\}$ serve as barriers within the 2-dimensional invariant sets of $\mathcal{RF}\cap\mathcal{RS}_{m=0}$ and $\mathcal{RF}\cap\mathcal{RS}_{\text{FS}}$, providing insight into the behavior of the integral curves that represent Ricci-flat metrics.a
\begin{proposition}
\label{prop_U_pm}
The sets
\begin{equation}
\begin{split}
\mathcal{U}_+:&=\mathcal{RF}\cap\mathcal{RS}_{m=0}\cap \{F_2\geq 0\}\cap \{X_2-\sqrt{Z_1Z_2}\geq 0\}\cap \{1-Z_1\geq 0\},\\
\mathcal{U}_-:&=\mathcal{RF}\cap\mathcal{RS}_{m=0}\cap \{F_2\leq 0\}\cap \{X_2-\sqrt{Z_1Z_2}\leq 0\},\\
\mathcal{D}_+:&=\mathcal{RF}\cap\mathcal{RS}_{\text{FS}}\cap \{F_{2m+2}\geq 0\}\cap \{X_2-\sqrt{Z_1Z_2}\geq 0\}\cap \{1-Z_1\geq 0\},\\
\mathcal{D}_-:&=\mathcal{RF}\cap\mathcal{RS}_{\text{FS}}\cap \{F_{2m+2}\leq 0\}\cap \{X_2-\sqrt{Z_1Z_2}\leq 0\}
\end{split}
\end{equation}
are invariant.
The integral curve $\xi(2,\pi,0,0)$ is the invariant set $\mathcal{U}_0:=\mathcal{U}_+\cap \mathcal{U}_-$.
The integral curve $\xi(2m+2,0,0,0)$ is the invariant set $\mathcal{D}_0:=\mathcal{D}_+\cap \mathcal{D}_-$.
\end{proposition}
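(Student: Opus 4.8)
The plan is the standard boundary analysis for flow-invariance. Since $\mathcal{RF}$, $\mathcal{RS}_{m=0}$ and $\mathcal{RS}_{\text{FS}}$ are already invariant, I only need to check that $V$ does not exit through the faces cut out by the remaining inequalities, and I would do this inside the two-dimensional invariant manifolds $\mathcal{M}_U:=\mathcal{RF}\cap\mathcal{RS}_{m=0}$ and $\mathcal{M}_D:=\mathcal{RF}\cap\mathcal{RS}_{\text{FS}}$. On $\mathcal{M}_U$ one has $\epsilon=0$, $W=Z_3=Z_4=0$, $H=X_1+2X_2=1$ and $Q=X_1^2+2X_2^2+8Z_2-2Z_1Z_2-1=0$; eliminating $X_1=1-2X_2$ and solving $Q=0$ exhibits $\mathcal{M}_U$ as a graph over $(Z_1,X_2)$, and the analogous reduction on $\mathcal{M}_D$ (with $H=X_1+(4m+2)X_2=1$ and $Z_3=Z_4=Z_2$) gives $Z_2=\tfrac{X_2[2-(4m+3)X_2]}{4(m+1)-Z_1}$. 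Substituting into $X_2=\sqrt{Z_1Z_2}$ produces a single curve, $X_2=\tfrac{Z_1}{Z_1+2}$ on $\mathcal{M}_U$ and $X_2=\tfrac{Z_1}{(2m+1)Z_1+2(m+1)}$ on $\mathcal{M}_D$, which I call $\mathcal{U}_0$ and $\mathcal{D}_0$.

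I would next establish the two facts that make everything collapse. First, a direct substitution shows $\{X_2=\sqrt{Z_1Z_2}\}\subseteq\{F_l=0\}$ on each manifold (with $l=2$ and $l=2m+2$ respectively), and a sign check after squaring $F_l=0$ shows the two barriers share the zero locus $\mathcal{U}_0$ (resp. $\mathcal{D}_0$) on the interior $Z_2>0$; hence $F_l$ and $X_2-\sqrt{Z_1Z_2}$ are co-oriented, the inequalities defining $\mathcal{U}_\pm$ (resp. $\mathcal{D}_\pm$) pick out the two sides of the curve, and $\mathcal{U}_+\cap\mathcal{U}_-=\mathcal{U}_0$. Second, on the face $\{X_2=\sqrt{Z_1Z_2}\}$ one computes $\langle\nabla(X_2-\sqrt{Z_1Z_2}),V\rangle=4Z_2+4mZ_3+\tfrac{\epsilon}{2}W-X_2(1+X_1)$, which is $4Z_2-X_2(1+X_1)$ on $\mathcal{M}_U$ and the Kähler--Einstein expression $(4m+4)Z_2-X_2(1+X_1)$ on $\mathcal{M}_D$, and vanishes identically on $\mathcal{U}_0$, $\mathcal{D}_0$; thus these curves are integral curves (indeed they lie in $\mathcal{RS}_{\text{KE}}$). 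Invariance of $\mathcal{U}_\pm$, $\mathcal{D}_\pm$ then follows from uniqueness of solutions (a trajectory in the two-dimensional manifold cannot cross the integral curve $\mathcal{U}_0$ or $\mathcal{D}_0$) together with the one remaining face $\{Z_1=1\}$ of the $+$-sets: there \eqref{eqn_Z1} gives $\langle\nabla Z_1,V\rangle=2Z_1(X_1-X_2)$, while the pleasant identity $F_l|_{Z_1=1}=X_2-X_1$ turns $F_l\geq0$ directly into $X_1\leq X_2$, so $V$ points into $\{Z_1\leq1\}$. (Alternatively the $\{F_l=0\}$ face is handled by Proposition \ref{prop_F2} and its $l=2m+2$ analogue via \eqref{eqn_derivative of Fl}, whose hypotheses $1-Z_1\geq0$, $Z_2-Z_3\geq0$, $X_1\geq0$ all hold there.)

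To identify the curves, the membership $\mathcal{U}_0\subset\mathcal{RS}_{m=0}$ forces $\theta=\pi$ and $\mathcal{D}_0\subset\mathcal{RS}_{\text{FS}}$ forces $\theta=0$, while $\mathcal{U}_0,\mathcal{D}_0\subset\mathcal{RF}$ forces $s_4=s_5=0$, Ricci-flat being both Einstein and steady. The integer $k$ is read off from the singular-orbit limit \eqref{eqn_new initial condition for 1st derivative}: along $\mathcal{U}_0$ one computes $\sqrt{Z_1/Z_2}=Z_1+2\to2$ as $Z_1\to0^{+}$ (i.e. $\eta\to-\infty$), and along $\mathcal{D}_0$ one computes $\sqrt{Z_1/Z_2}=(2m+1)Z_1+2(m+1)\to2m+2$. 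Since the one-dimensional invariant set emanating from $p_0$ and meeting the smoothness limits is exactly the unstable-manifold integral curve with these parameter values, this gives $\mathcal{U}_0=\xi(2,\pi,0,0)$ and $\mathcal{D}_0=\xi(2m+2,0,0,0)$.

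The hard part is the coincidence of the two barriers on $\mathcal{M}_U$ and $\mathcal{M}_D$: it is what upgrades $\mathcal{U}_0$, $\mathcal{D}_0$ from the a priori zero-dimensional intersection $\{F_l=0\}\cap\{X_2=\sqrt{Z_1Z_2}\}$ to genuine one-dimensional integral curves. I expect this to rest on the algebraic fact that, after imposing $Q=0$ and $H=1$, the discriminant of the squared equation $F_l=0$ is a perfect square ($(Z_1+2)^2$ on $\mathcal{M}_U$ and $((2m+1)Z_1+2(m+1))^2$ on $\mathcal{M}_D$), so that the admissible branch is exactly $\{X_2=\sqrt{Z_1Z_2}\}$. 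I would take care to confirm that the discarded branch genuinely violates the sign of $1-(4m+2)X_2$ throughout the relevant ranges of $Z_1$, and that the degenerate edge $\{Z_2=0\}$, on which $X_2=\sqrt{Z_1Z_2}$ holds trivially while $F_l=-1\neq0$, is a boundary edge of the manifold rather than an interior crossing.
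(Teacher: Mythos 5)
Your proposal is correct, and it takes a recognizably different route from the paper at the one point where the routes can differ. The paper never solves for the zero loci: it keeps the two barrier functions separate and runs a cross-sign argument, computing from \eqref{eqn_derivative of Fl} that $\langle\nabla F_2,V\rangle|_{\{F_2=0\}}=4(X_2-\sqrt{Z_1Z_2})\sqrt{Z_2/Z_1}\,(1-Z_1)$ and, on the other face, $\langle\nabla(X_2-\sqrt{Z_1Z_2}),V\rangle|_{\{X_2=\sqrt{Z_1Z_2}\}}=2\sqrt{Z_1Z_2}\,F_2$ (your unfactored expression $4Z_2+4mZ_3+\tfrac{\epsilon}{2}W-X_2(1+X_1)$ agrees with this after imposing $X_2=\sqrt{Z_1Z_2}$ and $H=1$), so that each inequality protects the other; invariance of $\mathcal{U}_+$ then follows together with the same $\{Z_1=1\}$ face check you make, $\mathcal{U}_0$ is declared a one-dimensional invariant set, $\xi(2,\pi,0,0)$ is identified with it by tangency at $p_0$ via \eqref{eqn_genearllinarized_Pcp}, and $\mathcal{U}_-$ is handled by the same non-crossing argument you use. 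You instead prove explicitly what the paper leaves implicit when it calls $\mathcal{U}_0=\mathcal{U}_+\cap\mathcal{U}_-$ one-dimensional rather than the generic zero-dimensional intersection of two hypersurfaces in a surface: after the graph reduction by $H=1$ and $Q=0$, the loci $\{F_l=0\}$ and $\{X_2=\sqrt{Z_1Z_2}\}$ coincide along the explicit rational curves $X_2=Z_1/(Z_1+2)$ and $X_2=Z_1/((2m+1)Z_1+2(m+1))$ (your discriminant computation checks out: it is $16(Z_1+2)^2(Z_1-1)^2$ on $\mathcal{M}_U$, a perfect square, with the spurious branch killed by the positivity of $\sqrt{Z_2/Z_1}$), and the common curve is itself an integral curve since your face derivative vanishes identically on it. Your route buys explicit formulas — in particular the identity $X_2(1+X_1)=(4m+4)Z_2$ on $\mathcal{D}_0$, which exhibits $\mathcal{D}_0\subset\mathcal{RS}_{\text{KE}}$, and the asymptotic ratio $\sqrt{Z_1/Z_2}\to 2$ resp.\ $2m+2$, which pins $k$ through \eqref{eqn_new initial condition for 1st derivative} instead of the paper's tangency argument — at the cost of branch bookkeeping the paper's factorized barrier computation avoids entirely.

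Two small corrections, neither fatal. First, the sign condition that discards the spurious branch is not $1-(4m+2)X_2\geq 0$ (that is $X_1$) but $1-(4m+3)X_2=X_1-X_2\geq 0$, coming from $F_l=0\Leftrightarrow (l)\sqrt{Z_2/Z_1}\,(1-Z_1)=X_1-X_2$ with $\sqrt{Z_2/Z_1}\geq 0$; with this quantity the check you flagged does go through on both manifolds (on the discarded branch $X_1-X_2=\tfrac{2(Z_1-1)}{Z_1+2}$ on $\mathcal{M}_U$, which has the wrong sign for every $Z_1\neq 1$). Second, the parenthetical that $\mathcal{U}_0$ lies in $\mathcal{RS}_{\text{KE}}$ is only literally true for $\mathcal{D}_0$, since $\mathcal{RS}_{\text{KE}}\subset\mathcal{RS}_{\text{FS}}$ requires $Z_2=Z_3$, which fails on $\mathcal{RS}_{m=0}$; for $\mathcal{U}_0$ the K\"ahler interpretation is only through the four-dimensional reading of $\mathcal{RS}_{m=0}$ described in Section 2. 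Both are asides and do not affect the argument.
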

\begin{proof}
Since $H=1$ and $X_3=0$ in $\mathcal{RF}\cap \mathcal{RS}_{m=0}$, the equality $X_1=1-2X_2$ holds. By \eqref{eqn_derivative of Fl}, we have
\begin{equation}
\label{eqn_barrier F2}
\begin{split}
\left.\langle \nabla F_2,  V\rangle \right|_{\mathcal{U}_\pm\cap \{F_2=0\}}&=\left(2\sqrt{\frac{Z_2}{Z_1}}(1-X_1)-4 Z_2\right)(1-Z_1)\\
&= 4(X_2- \sqrt{Z_1Z_2})\sqrt{\frac{Z_2}{Z_1}}(1-Z_1).
\end{split}
\end{equation}
On $\mathcal{U}_\pm\cap \{X_2-\sqrt{Z_1Z_2}=0\}$, we have $F_2=3X_2-1+2\left(\sqrt{\frac{Z_2}{Z_1}}-\sqrt{Z_1Z_2}\right)=\sqrt{Z_1Z_2}-1+2\sqrt{\frac{Z_2}{Z_1}}$. Therefore,
\begin{equation}
\label{eqn_barrier X_2-sqrtZ1Z2}
\begin{split}
\left.\langle \nabla (X_2-\sqrt{Z_1Z_2}),  V\rangle \right|_{\mathcal{U}_\pm\cap \{X_2-\sqrt{Z_1Z_2}=0\}}&= -X_2+R_2-\sqrt{Z_1Z_2}(X_1-2X_2)\\
&=-\sqrt{Z_1Z_2}+4Z_2-2Z_1Z_2-\sqrt{Z_1Z_2}(1-4\sqrt{Z_1Z_2})\\
&=2\sqrt{Z_1Z_2}F_2.
\end{split}
\end{equation}
Since $F_2\geq 0$, the inequality \eqref{eqn_Z1} still holds in $\mathcal{U}_+$ by \eqref{eqn_barrier F2} and \eqref{eqn_barrier X_2-sqrtZ1Z2}. Therefore, the set $\mathcal{U}_+$ is invariant. By \eqref{eqn_Z1} and \eqref{eqn_barrier F2}, the set $\mathcal{U}_0$ is a 1-dimensional invariant set. By \eqref{eqn_genearllinarized_Pcp}, the integral curve $\xi(2,\pi,0,0)$ is tangent to $\mathcal{U}_0$. Therefore, the integral curve is $\mathcal{U}_0$. By \eqref{eqn_barrier X_2-sqrtZ1Z2}, it is clear that $\mathcal{U}_-\cap \{1-Z_1\leq 0\}$ is invariant. If an integral curve in $\mathcal{U}_-$ escapes, it has to transversally intersect $\mathcal{U}_0$, a contradiction. The set $\mathcal{U}_-$ is thus invariant.

The statements on $\mathcal{D}_\pm$ are proved with similar computations. Since $H=1$ and $X_2-X_3=0$ in $\mathcal{RF}\cap \mathcal{RS}_{\text{FS}}$, the equality $X_1=1-(4m+2)X_2$ holds. With $Z_2-Z_3=0$, we have analogous equations to \eqref{eqn_barrier F2} and \eqref{eqn_barrier X_2-sqrtZ1Z2}. The integral curve $\xi(2m+2,0,0,0)$ is thus identified as an algebraic curve $\mathcal{D}_0$.
\end{proof}

\begin{lemma}
\label{lem_long_existing xi_1}
Integral curves in $\left\{\xi(k,\theta,s_4,s_5)\mid k\in\{1,2\}, \theta\in [0,\pi], s_4,s_5\geq 0\right\}$ are defined on $\mathbb{R}$. Metrics represented by these integral curves with $\theta<\pi$ are complete.
\end{lemma}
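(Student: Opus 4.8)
The plan is to show that every integral curve $\xi(k,\theta,s_4,s_5)$ with $k\in\{1,2\}$ enters the compact invariant set $\mathcal{F}$ from \eqref{eqn_setF}, and then invoke compactness plus invariance to conclude that the flow exists for all $\eta\in\mathbb{R}$. Because $V$ is a polynomial vector field, a trajectory confined to a compact set cannot blow up in finite $\eta$-time, so it is automatically defined on all of $\mathbb{R}$; Lemma \ref{lem_invariant_setB} supplies both the compactness and the forward-invariance of $\mathcal{F}$, so the only genuine work is the \emph{entry} step.

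The first step is to verify that for $k\in\{1,2\}$ the trajectory lies in $\mathcal{F}$ immediately after $p_0$, i.e. as $\eta\to-\infty$. This is a local computation using the linearized initial data \eqref{eqn_genearllinarized_Pcp_2} together with the expansions near $p_0$. I would check each defining inequality of $\mathcal{F}$ in turn. The constraint $1-Z_1\geq 0$ is where the restriction $k\leq 2$ enters: from the first-order condition one reads off $Z_1\sim k^2(\dots)e^{2\eta}$ relative to $Z_2$, and more precisely $\lim_{\eta\to-\infty}\sqrt{Z_1/Z_2}=k$ by \eqref{eqn_new initial condition for 1st derivative}, so whether the squashed principal orbit starts inside $\{Z_1\leq 1\}$ hinges on $k$ being small; for $k\geq 3$ this fails, which is exactly why the techniques become obsolete and one must pass to Theorems \ref{thm_2-1}--\ref{thm_2-2}. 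The inequalities $X_1,X_2,X_3\geq 0$ and $Z_2-Z_3\geq 0$ follow from the leading-order behavior of the eigenvector expansion and the parameter restrictions \eqref{eqn_parameter restriction}, \eqref{eqn_parameter circle}; the remaining two barriers $F_2\geq 0$ and $2(\sqrt{Z_2}-\sqrt{Z_3})+X_3-X_2\geq 0$ are checked the same way at leading order in $e^{2\eta}$. I expect this to be the main obstacle: one must confirm that \emph{every} member of the family, across the full arc $\theta\in[0,\pi]$ and all $s_4,s_5\geq 0$, starts inside $\mathcal{F}$, and that no inequality is violated at leading order, which may require expanding to the next order when a leading coefficient vanishes on the boundary.

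Once the entry into $\mathcal{F}$ is established, invariance (Lemma \ref{lem_invariant_setB}) traps the trajectory for all forward $\eta$, and since the backward direction limits to $p_0$, the curve is defined on all of $\mathbb{R}$. For the completeness claim when $\theta<\pi$, I would translate back to the original variable $t$ via $d\eta=(\trace(L)-\dot f)\,dt$ and show the total arclength $\int dt$ diverges. The key is that $W=1/(\trace(L)-\dot f)^2$ stays controlled inside $\mathcal{F}$ and that the metric coefficients $a,b,c$ do not degenerate at a finite distance; the endpoint behavior as $\eta\to+\infty$ should correspond to one of the asymptotic regimes (AC, AH, ALC, ACP) catalogued in Theorem \ref{thm_1}, each of which is complete. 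The exclusion of $\theta=\pi$ is because those curves lie in $\mathcal{RS}_{m=0}$ and are geometrically degenerate as $(4m+4)$-dimensional solitons, per the discussion of \eqref{eqn_new initial condition for 0th derivative-1}, so they represent lower-dimensional metrics rather than complete metrics on $\mathcal{O}(k)$. I would close by citing \cite[Proposition 2.2]{chi_Non-Shrinking_2024}, which packages the equivalence between globally-defined integral curves emanating from $p_0$ and complete cohomogeneity-one Ricci solitons.
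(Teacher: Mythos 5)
Your overall architecture---show the trajectory enters the compact invariant set $\mathcal{F}$ of \eqref{eqn_setF}, then use Lemma \ref{lem_invariant_setB} and the escape lemma for global existence, and \cite[Proposition 2.2]{chi_Non-Shrinking_2024} for completeness---is exactly the paper's. But you misdiagnose where the restriction $k\leq 2$ enters, and this is a genuine gap, not a presentational slip. Near $p_0$ one has $Z_1\sim s_6e^{2\eta}\to 0$ as $\eta\to-\infty$ for \emph{every} $k$, so the constraint $1-Z_1\geq 0$ holds strictly at $p_0$ regardless of $k$ and costs nothing; your claim that $\{Z_1\leq 1\}$ fails initially for $k\geq 3$ is false. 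The constraint that actually does the work is $F_2\geq 0$: since $X_1\to 1$, $X_2\to 0$ and $\sqrt{Z_2/Z_1}\to 1/k$ by \eqref{eqn_new initial condition for 1st derivative}, one computes $\lim_{\eta\to-\infty}F_2=-1+\tfrac{2}{k}$, which is positive for $k=1$, \emph{zero} for $k=2$, and negative for $k\geq 3$. It is this limit, not the position of $Z_1$ relative to $1$, that makes $\mathcal{F}$ unusable for $k\geq 3$.

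Consequently the hardest case, $k=2$, where $F_2$ vanishes at $p_0$ so that no leading-order sign is available, is not covered by your remark that the barrier inequalities are ``checked the same way at leading order'' (your hedge about expanding to the next order gestures at the issue but supplies no mechanism). The paper settles it with a sign argument on the derivative: from \eqref{eqn_derivative of Fl} one writes $F_2'$ as in \eqref{eqn_derivative of F2}, and the expansions \eqref{eqn_factors with sign} show that if $F_2<0$ initially then $F_2'\geq 0$ initially, a contradiction; moreover the leading coefficients in \eqref{eqn_factors with sign} vanish simultaneously precisely for $(k,\theta,s_4,s_5)=(2,\pi,0,0)$, and that exceptional curve must be treated separately by identifying it with the explicit invariant set $\mathcal{U}_0=\mathcal{U}_+\cap\mathcal{U}_-$ of Proposition \ref{prop_U_pm}, so that it lies on $\partial\mathcal{F}$ but never leaves $\mathcal{F}$. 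Without these two steps your entry argument is incomplete for all of the $k=2$ family. The remainder of your proposal (invariance traps the forward flow, a polynomial field on a compact set cannot blow up in finite $\eta$, completeness via the cited equivalence, and the degeneracy of $\theta=\pi$ through $\mathcal{RS}_{m=0}$) is sound and consistent with the paper.
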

\begin{proof}
At $p_0$, the inequalities $1-Z_1>0$ and $X_1>0$ hold. The other defining inequalities in \eqref{eqn_setF} vanish at $p_0$. By \eqref{eqn_genearllinarized_Pcp} and \eqref{eqn_s1s2s3 in theta}, the defining inequalities
$$Z_2-Z_3\geq 0,\quad 2(\sqrt{Z_2}-\sqrt{Z_3})+X_3-X_2\geq 0,\quad X_2\geq 0,\quad X_3\geq 0$$
hold near $p_0$. 

We focus on the sign of $F_2$ near $p_0$. By the linearized solution, we have $\lim\limits_{\eta\to\infty} F_2=-1+\frac{2}{k}$. If $k=1$, the function is positive initially. Consider $k=2$. By \eqref{eqn_derivative of Fl}, we have 
\begin{equation}
\label{eqn_derivative of F2}
\begin{split}
F_2'&=F_2\left(G-\frac{\epsilon}{2}W-1+4\sqrt{Z_1Z_2}\right)\\
&\quad +(1-X_1-2\sqrt{Z_1Z_2})2\sqrt{\frac{Z_2}{Z_1}}(1-Z_1)+4mZ_3(1-Z_1).
\end{split}
\end{equation}
By \eqref{eqn_genearllinarized_Pcp}, we also have 
\begin{equation}
\label{eqn_factors with sign}
\begin{split}
&G-\frac{\epsilon}{2}W-1+4\sqrt{Z_1Z_2}\\
&\sim      \left(-(16m^2+24m)s_1-(8m^2+16m)\sqrt{2}s_3-2s_4-(8m+6)\frac{\epsilon}{2}s_5\right)e^{2\eta}+O(e^{(2+\delta)\eta}),\\
&1-X_1-2\sqrt{Z_1Z_2}\\
&\sim \left((8m^2+12m)s_1+(4m^2+8m)\sqrt{2}s_3+s_4+(4m+2)\frac{\epsilon}{2}s_5\right)e^{2\eta}+O(e^{(2+\delta)\eta}).
\end{split}
\end{equation}
If $F_2<0$ initially, the derivative $F_2'$ is initially non-negative by \eqref{eqn_factors with sign}, a contradiction. Note that leading terms in \eqref{eqn_factors with sign} vanish simultaneously only if $(k,\theta,s_4,s_5)=(2,\pi,0,0)$. By Proposition \ref{prop_U_pm}, the integral curve $\xi(2,\pi,0,0)$ still stays in $\mathcal{F}$. Therefore, each integral curve in $\left\{\xi(k,\theta,s_4,s_5)\mid k\in\{1,2\}, \theta\in [0,\pi], s_4,s_5\geq 0\right\}$ is in the set $\mathcal{F}$, with $\xi(2,\pi,0,0)$ being the only one that lies on the boundary $\partial\mathcal{F}$. By the escape lemma, each integral curve in the family is defined on $\mathbb{R}$.
\end{proof}

%

\subsection{$k\geq 3$}
Since $\lim\limits_{\eta\to-\infty} F_2=-1+\frac{2}{k}$, the function $F_2$ is initially \emph{negative} along $\xi(k,\theta,s_4,s_5)$ if $k\geq 3$. Therefore, the invariant set $\mathcal{F}$ used for $k\in\{1,2\}$ is no longer applicable. We hence apply an alternative method to analyze this case.

Define the following sets.
\begin{equation}
\begin{split}
&\mathcal{A}:=\mathcal{RS}\cap \{1-Z_1\geq 0\} \cap \{X_1-X_2\leq 0\} \cap \{Z_2-Z_3\geq 0\}\\
&\quad \cap \left\{2(\sqrt{Z_2}-\sqrt{Z_3})+X_3-X_2\geq 0\right\}\cap \{X_1,X_2,X_3\geq 0\},\\
&\mathcal{B}:=\mathcal{RS}\cap \{1-Z_1\geq 0\}\cap \{X_1-X_2\geq 0\}\cap \{Z_2-Z_3\geq 0\}\\
&\quad \cap \left\{2(\sqrt{Z_2}-\sqrt{Z_3})+X_3-X_2\geq 0\right\}\cap \{X_1,X_2,X_3\geq 0\},\\
&\mathcal{C}:=\mathcal{RS}\cap \{1-Z_1\leq 0\} \cap \{X_1-X_2\geq 0\}.
\end{split}
\end{equation}
The compact invariant set $\mathcal{A}$ plays an important role in our previous construction; see \cite[Lemma 3.3]{chi_Non-Shrinking_2024}. Unlike the previous setting, the critical point $p_0$ does not belong to $\mathcal{A}$. In the cases $k\in\{1,2\}$, this difficulty is overcome by the invariant set $\mathcal{F}$, which contains the trajectories near $p_0$ and guarantees that they enter $\mathcal{A}$ eventually. For $k\geq 3$, however, integral curves that are not in $\mathcal{F}$ initially. We hence introduce the following trichotomy to describe possible behaviors of integral curves emanating $p_0$.

\begin{lemma}
\label{lem_B, C or D}
Sets $\mathcal{A}$ and $\mathcal{B}$ are compact. Sets $\mathcal{A}$ and $\mathcal{C}$ are invariant.
Each $\xi(k,\theta,s_4,s_5)$ stays in $\mathcal{B}$, or eventually enters $\mathcal{C}$ through $\mathcal{B}\cap \{1-Z_1=0\}\cap \{X_1-X_2>0\}$, or eventually enters $\mathcal{A}$ through $\mathcal{B}\cap \{1-Z_1>0\}\cap \{X_1-X_2=0\}$.
\end{lemma}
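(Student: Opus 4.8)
The plan is to prove the three assertions by the same face-by-face inward-pointing scheme used for $\mathcal{F}$ in Lemma \ref{lem_invariant_setB}, isolating the single new ingredient: the evolution of $X_1-X_2$ and the fact that $\{Z_1=1\}\cap\{X_1-X_2=0\}$ is an invariant locus.

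For compactness I would note that $\mathcal{A}$ and $\mathcal{B}$ are cut from $\mathcal{RS}$ by exactly the inequalities $1-Z_1\geq 0$ and $Z_2-Z_3\geq 0$ (together with $X_i\geq 0$) that already force $\mathcal{F}$ to be compact in Lemma \ref{lem_invariant_setB}; the constraint $\{F_2\geq 0\}$ there plays no role in boundedness, so replacing it by a sign condition on $X_1-X_2$ leaves the same bounds. Being closed, $\mathcal{A}$ and $\mathcal{B}$ are therefore compact.

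For invariance I would first dispatch the faces shared with $\mathcal{F}$, namely $\{Z_2-Z_3=0\}$, $\{2(\sqrt{Z_2}-\sqrt{Z_3})+X_3-X_2=0\}$ and $\{X_i=0\}$: on these the vector field points inward by the computations \eqref{eqn_Z2 Z3}, \eqref{eqn_Z2 Z3 X2 X3} and the signs $R_1,R_2,R_3\geq 0$, none of which involves the sign of $X_1-X_2$. The genuinely new work concerns the pair of faces $\{Z_1=1\}$ and $\{X_1-X_2=0\}$. Here I would use $V_1-V_2=(X_1-X_2)(G-\tfrac{\epsilon}{2}W-1)+(R_1-R_2)$ together with the factorization $R_1-R_2=4(Z_1-1)(Z_2+mZ_3)$ to obtain
\[
\langle\nabla(X_1-X_2),V\rangle\big|_{\{X_1-X_2=0\}}=4(Z_1-1)(Z_2+mZ_3),\qquad \langle\nabla Z_1,V\rangle\big|_{\{Z_1=1\}}=2(X_1-X_2).
\]
On $\mathcal{A}$ both right-hand sides are $\leq 0$ (using $Z_1\leq 1$ and $X_1-X_2\leq 0$), so the field points inward; on $\mathcal{C}$ both are $\geq 0$ (using $Z_1\geq 1$ and $X_1-X_2\geq 0$), giving invariance of $\mathcal{C}$. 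Since $\mathcal{RS}$ is itself flow-invariant, the $\mathcal{RS}$-boundary requires no separate treatment.

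For the trichotomy I would first check that $\xi(k,\theta,s_4,s_5)$ begins in $\mathcal{B}$: near $p_0$ one has $Z_1=0<1$ and $X_1-X_2\to 1$ as $\eta\to-\infty$, while the remaining defining inequalities hold there by the local estimates already used for Lemma \ref{lem_long_existing xi_1}. By the inward-pointing faces above, the curve can leave $\mathcal{B}$ only across $\{Z_1=1\}$ or $\{X_1-X_2=0\}$. The decisive observation is that their common edge $\{Z_1=1\}\cap\{X_1-X_2=0\}$ is precisely the invariant set $\mathcal{RS}_{\text{round}}$, since $Z_1'=2Z_1(X_1-X_2)$ and $(X_1-X_2)'$ both vanish there; because $\xi$ starts with $Z_1=0\neq 1$, uniqueness of ODE solutions forbids it from meeting this edge at any finite time. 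Hence at a first exit time the curve crosses exactly one face with the other inequality strict: through $\{Z_1=1,\,X_1-X_2>0\}$, where $Z_1'>0$ drives it into $\mathcal{C}$, or through $\{X_1-X_2=0,\,Z_1<1\}$, where $(X_1-X_2)'<0$ drives it, with the inherited inequalities, into $\mathcal{A}$; if no exit time exists, it stays in $\mathcal{B}$. I expect this edge case to be the main obstacle: identifying the degenerate corner with the round locus $\mathcal{RS}_{\text{round}}$ and invoking uniqueness is exactly what removes the ambiguity and yields the clean three-way alternative.
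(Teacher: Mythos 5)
Your proposal is correct and takes essentially the same route as the paper: compactness from the constraints $1-Z_1\geq 0$ and $Z_2-Z_3\geq 0$, invariance of $\mathcal{C}$ and the non-exit faces of $\mathcal{B}$ from the same $(Z_1,X_1-X_2)$ subsystem \eqref{eqn_D set boundary} together with \eqref{eqn_Z2 Z3}--\eqref{eqn_Z2 Z3 X2 X3} and $R_i\geq 0$, and the trichotomy from the key observation that $\{Z_1=1\}\cap\{X_1-X_2=0\}=\mathcal{RS}_{\text{round}}$ is invariant while $p_0\notin\mathcal{RS}_{\text{round}}$. The only cosmetic difference is that you re-derive the invariance of $\mathcal{A}$ by a face-by-face check (correctly, via $R_1-R_2=4(Z_1-1)(Z_2+mZ_3)$), where the paper simply cites \cite[Proposition 3.2]{chi_Non-Shrinking_2024}.
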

\begin{proof}
By the inequalities $1-Z_1\geq 0$ and $Z_2-Z_3\geq 0$, it is clear that $\mathcal{A}$ and $\mathcal{B}$ are compact. Since
\begin{equation}
\label{eqn_D set boundary}
\begin{bmatrix}
Z_1\\
X_1-X_2
\end{bmatrix}'=\begin{bmatrix}
2Z_1(X_1-X_2)\\
(X_1-X_2)\left(G-\frac{\epsilon}{2}W-1\right)+(4 Z_2+4mZ_3)(Z_1-1)
\end{bmatrix},
\end{equation}
the set $\mathcal{C}$ is invariant.

By \eqref{eqn_genearllinarized_Pcp_2}, each $\xi(k,\theta,s_4,s_5)$ is initially in $\mathcal{B}$. The integral curve is in the boundary $\partial\mathcal{B}$ only if $\theta=0$. By \eqref{eqn_Z2 Z3}-\eqref{eqn_Z2 Z3 X2 X3} and the fact that $R_i\geq 0$ in $\mathcal{B}$, if the integral escapes $\mathcal{B}$, it does not leave the set through the following faces:
$$
\mathcal{B} \cap \{Z_2-Z_3=0\},\quad \mathcal{B}\cap \left\{2(\sqrt{Z_2}-\sqrt{Z_3})+X_3-X_2=0\right\},\quad \mathcal{B}\cap \{X_i=0\}.
$$
In other words, if a $\xi(k, \theta, s_4,s_5)$ leaves $\mathcal{B}$ at some $\eta_*$, either $1-Z_1$ or $X_1-X_2$ vanishes at $\eta_*$. These two functions cannot vanish simultaneously as the set $\mathcal{RS}_{\text{round}}$ is invariant while $p_0 \notin \mathcal{RS}_{\text{round}}$. If the integral curve leaves $\mathcal{B}$ through a point with $1-Z_1=0$ and $X_1-X_2>0$, it enters the invariant set $\mathcal{C}$. If the integral curve leaves $\mathcal{B}$ through a point with $1-Z_1>0$ and $X_1-X_2=0$, then it enters the invariant set $\mathcal{A}$. The proof is complete.
\end{proof}

Note that $\mathcal{F}\subset\mathcal{B}$. While $\mathcal{B}$ being a larger set that include $\xi(k, \theta, s_4,s_5)$ with any $k\geq 1$, the trade-off is losing the invariance of the set. By Lemma \ref{lem_B, C or D}, it is necessary to determine the parameters for which the integral curve enters $\mathcal{A}$ or stays in $\mathcal{B}$. In the Ricci-flat case with $\theta\in\{0,\pi\}$, this question is straightforward to resolve, as demonstrated in the following proposition.

\begin{proposition}
\label{prop_where do xi go}
The integral curve $\xi(k,\pi,0,0)$ eventually enters $\mathcal{A}$ if $k=1$, eventually enters $\mathcal{C}$ if $k\geq 3$. The integral curve $\xi(k,0,0,0)$ eventually enters $\mathcal{A}$ if $k\in [1,2m+1]$, eventually enters $\mathcal{C}$ if $k\geq 2m+3$. 
\end{proposition}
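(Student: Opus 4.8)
The plan is to recognize that the four trajectories in the statement are exactly the boundary curves organized by Proposition \ref{prop_U_pm}. Since $s_4=s_5=0$, the expansions \eqref{eqn_genearllinarized_Pcp_2} kill the $w_4$- and $w_5$-components, so $Q\equiv 0$, $H\equiv 1$, $W\equiv 0$ and each curve lies in $\mathcal{RF}$; moreover $\theta=\pi$ places it in $\mathcal{RF}\cap\mathcal{RS}_{m=0}$ and $\theta=0$ in $\mathcal{RF}\cap\mathcal{RS}_{\text{FS}}$. The whole argument then reduces to deciding, on each of these two-dimensional invariant sets, on which side of the separatrix $\mathcal{U}_0=\xi(2,\pi,0,0)$ (resp. $\mathcal{D}_0=\xi(2m+2,0,0,0)$) the trajectory emanates, and then tracking that side into either $\mathcal{A}$ or $\mathcal{C}$.

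First I would read off the initial data from the linearization. Using $\lim_{\eta\to-\infty}\sqrt{Z_1/Z_2}=k$ one gets $\lim_{\eta\to-\infty}F_2=-1+\tfrac{2}{k}$ and $\lim_{\eta\to-\infty}F_{2m+2}=-1+\tfrac{2m+2}{k}$, while substituting $w(\pi,k)=w_2+k^2w_6$ (resp. $w(0,k)=w_1+k^2w_6$) into \eqref{eqn_genearllinarized_Pcp_2} gives $X_2-\sqrt{Z_1Z_2}\sim(2-k)e^{2\eta}$ (resp. $\sim(2m+2-k)e^{2\eta}$). Hence for $\theta=\pi$ the curve starts in $\mathcal{U}_+$ when $k=1$ and in $\mathcal{U}_-$ when $k\geq 3$; for $\theta=0$ it starts in $\mathcal{D}_+$ when $1\leq k\leq 2m+1$ and in $\mathcal{D}_-$ when $k\geq 2m+3$. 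The excluded values $k=2$ and $k=2m+2$ are precisely the separatrices $\mathcal{U}_0$ and $\mathcal{D}_0$. By Proposition \ref{prop_U_pm} the curve then remains in its invariant set for all $\eta$.

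Next I would convert confinement into the dichotomy of Lemma \ref{lem_B, C or D}. On each planar set $H=1$ gives $X_1=1-2X_2$ (resp. $X_1=1-(4m+2)X_2$). On the $+$ side the inequality $\{1-Z_1\geq 0\}$ is available and $F_2\geq 0$ (resp. $F_{2m+2}\geq 0$) forces $X_1-X_2\leq 0$ on $\{Z_1=1\}$, so $Z_1'=2Z_1(X_1-X_2)\leq 0$ there and the curve never crosses into $\{Z_1>1\}$; thus it cannot enter $\mathcal{C}$, and by Lemma \ref{lem_B, C or D} it must either stay in $\mathcal{B}$ or enter $\mathcal{A}$. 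On the $-$ side one has $F_2\leq 0$, so at any point with $X_1-X_2=0$ the identity $F_2=2\sqrt{Z_2}\,(Z_1^{-1/2}-Z_1^{1/2})$ forces $Z_1\geq 1$; consequently $X_1-X_2$ cannot vanish while $Z_1<1$, the curve cannot enter $\mathcal{A}$, and must either stay in $\mathcal{B}$ or cross $\{Z_1=1\}$ transversally (where $X_1-X_2>0$) into $\mathcal{C}$.

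The remaining possibility in both cases is that the trajectory stays in $\mathcal{B}$ forever, and this is where I expect the main obstacle. A trajectory trapped in $\mathcal{B}$ has $X_1-X_2\geq 0$, so $Z_1$ is nondecreasing and bounded by $1$, forcing $X_1-X_2\to 0$ and convergence to a fixed point with $X_1=X_2$; a short computation shows the only such fixed point $p_\ast$ lies on $\{Z_1=1\}$ and is exactly the $\omega$-limit of the separatrix $\mathcal{U}_0$ (resp. $\mathcal{D}_0$). The crux is therefore to rule out that a trajectory from the open side $\mathcal{U}_\pm$ (resp. $\mathcal{D}_\pm$) limits onto $p_\ast$. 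I would do this by linearizing the planar system at $p_\ast$ and checking that the Jacobian has negative determinant, so that $p_\ast$ is a saddle whose one-dimensional stable manifold is the separatrix itself; by uniqueness no trajectory from the open side can converge to it. This excludes the trapped-in-$\mathcal{B}$ scenario and forces the curve into $\mathcal{A}$ on the $+$ side and into $\mathcal{C}$ on the $-$ side. The Fubini--Study case is identical after replacing $F_2$ by $F_{2m+2}$ and $2$ by $2m+2$ throughout.
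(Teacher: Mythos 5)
Your proposal is correct and follows essentially the same route as the paper's proof: reduce to the planar Ricci-flat systems on $\mathcal{RF}\cap\mathcal{RS}_{m=0}$ and $\mathcal{RF}\cap\mathcal{RS}_{\text{FS}}$, confine each curve to $\mathcal{U}_\pm$ (resp. $\mathcal{D}_\pm$) via Proposition \ref{prop_U_pm}, rule out the wrong exit face of $\mathcal{B}$ by the sign of $F_l$ there, and rule out the trapped-in-$\mathcal{B}$ scenario by monotonicity of $Z_1$ together with the saddle structure of the limiting critical point $(0,1)$, whose stable branch inside $\{X_1-X_2\geq 0,\ Z_1\leq 1\}$ is the separatrix $\mathcal{U}_0$ (resp. $\mathcal{D}_0$). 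The only cosmetic difference is that you propose to verify the saddle property by computing the Jacobian determinant, which the paper asserts without displaying the computation.
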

\begin{proof}
Let $X:=X_1-X_2$. The equalities $X_1=\frac{1+2X}{3}$ and $X_2=\frac{1-X}{3}$ hold on $\mathcal{E}\cap \mathcal{RS}_{m=0}$. Furthermore, the system \eqref{eqn_Polynomial_soliton_equation} restricted on the $2$-dimensional invariant set $\mathcal{RF}\cap \mathcal{RS}_{m=0}$ is equivalent to
\begin{equation}
\label{eqn_RF system}
\begin{bmatrix}
X\\
Z_1
\end{bmatrix}'=\begin{bmatrix}
X (G-1)+4Z_2(Z_1-1)\\
2Z_1 X
\end{bmatrix}
\end{equation}
where $Z_2=\frac{1-G}{8-2Z_1}\geq 0$. By Proposition \ref{prop_U_pm}, the integral curve $\xi(2,\pi,0,0)$ joins the critical point $(1,0)$ and $(0,1)$. As the critical point $(0,1)$ is a saddle in \eqref{eqn_RF system}, each $\xi(k,\pi,0,0)$ does not converge to this critical point if $k\neq 2$.

The integral curve $\xi(1,\pi,0,0)$ is initially in $\mathcal{B}\cap \mathcal{U}_+$. If $\xi(1,\pi,0,0)$ escapes $\mathcal{B}$ through points in $\mathcal{B}\cap \{1-Z_1=0\}\cap \{X_1-X_2>0\}$, the inequality $F_2<0$ holds at the intersection point, meaning the integral curve escapes $\mathcal{U}_+$ first. This is a contradiction to Proposition \ref{prop_U_pm}. If $\xi(1,\pi,0,0)$ stays in $\mathcal{B}\cap \mathcal{RF}\cap \mathcal{RS}_{m=0}$, the monotonicity of $Z_1$ forces the integral curve to converge to $(0,1)$, a contradiction. Therefore, the integral curve $\xi(1,\pi,0,0)$ eventually enters $\mathcal{A}$.

On the other hand, the integral curve $\xi(k,\pi,0,0)$ is initially in  $\mathcal{B}\cap \mathcal{U}_-$ if $k\geq 3$. If the integral curve escapes $\mathcal{B}$ through points in $\mathcal{B}\cap \{1-Z_1>0\}\cap \{X_1-X_2=0\}$, the inequality $F_2>0$ holds at the intersection point, meaning the integral curve escapes $\mathcal{U}_-$ first. This is a contradiction to Proposition \ref{prop_U_pm}. If $\xi(k,\pi,0,0)$ stays in $\mathcal{B}\cap \mathcal{RF}\cap \mathcal{RS}_{m=0}$, the integral curve converges to $(0,1)$, a contradiction. Therefore, the integral curve $\xi(k,\pi,0,0)$ eventually enters $\mathcal{C}$ if $k\geq 3$.

By Proposition \ref{prop_U_pm}, the $2$-dimensional set $\mathcal{B}\cap \mathcal{RF}\cap \mathcal{RS}_{\text{FS}}$ is divided into two components by the integral curve $\xi(2m+2,0,0,0)$. The statement for $\xi(k,0,0,0)$ is proved with the similar argument.
\end{proof}

\begin{proposition}
\label{prop_concave potential}
The function $\frac{H-1}{\sqrt{-Q}}$ monotonically decreases along each $\xi(k, \theta, s_4,s_5)$ with $s_4>0$ as long as the integral curve is in $\mathcal{B}$.
\end{proposition}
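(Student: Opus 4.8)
The plan is to study $\Phi := \frac{H-1}{\sqrt{-Q}}$ as a function of $\eta$ along the integral curve and to show $\Phi'\le 0$ directly. Set $N := H-1$ and $D := -Q$; on $\mathcal{RS}$ one has $N\le 0$ and $D\ge 0$, and for a non-Einstein soliton $D>0$ strictly. Applying the quotient rule and substituting the evolution equations \eqref{eqn_characteriaztion_of_non_trivial_soliton}, the two terms proportional to $G-\frac{\epsilon}{2}W$ cancel and one is left with the compact identity
\[
D^{3/2}\,\Phi' = (H-1)Q - Q^2 + \tfrac{\epsilon}{2}(H-1)^2W.
\]
Hence $\Phi'\le 0$ is equivalent to $Q^2 \ge (H-1)Q + \frac{\epsilon}{2}(H-1)^2W$, which is equivalent (solving the resulting quadratic in $-Q$) to $(1-H)\big(1+\sqrt{1+2\epsilon W}\big)\le 2(-Q)$. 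When $\epsilon = 0$ this reduces to $Q\big(Q-(H-1)\big)\ge 0$, i.e. $\psi := Q-H+1\le 0$; recalling $\psi = \ddot f/(\trace(L)-\dot f)^2$, this is exactly concavity of the potential, which explains the name of the proposition.

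First I would prove $\psi\le 0$. Differentiating $\psi = Q-H+1$ using \eqref{eqn_characteriaztion_of_non_trivial_soliton} and eliminating $Q$ via $Q = \psi + (H-1)$ gives the scalar linear equation
\[
\psi' = \psi\big(2G-\epsilon W-1\big) + (H-1)\big(G+\tfrac{\epsilon}{2}W\big).
\]
On $\mathcal{B}\subset\mathcal{RS}$ we have $H\le 1$, $G\ge 0$ and $W\ge 0$, so the inhomogeneous term $(H-1)(G+\frac{\epsilon}{2}W)$ is non-positive; multiplying by a positive integrating factor makes the resulting quantity non-increasing. Since \eqref{eqn_linearized Q-H+1} gives $\psi\sim -s_4e^{2\eta}<0$ as $\eta\to-\infty$ for $s_4>0$, it follows that $\psi<0$ for every finite $\eta$ while the curve remains in $\mathcal{B}$. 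By the first paragraph this already settles the steady case $\epsilon = 0$.

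For the expanding case the nonnegative term $\frac{\epsilon}{2}(H-1)^2W$ makes $\psi\le 0$ insufficient: the sharp condition $(1-H)(1+\sqrt{1+2\epsilon W})\le 2(-Q)$ is strictly stronger than $1-H\le -Q$ as soon as $W>0$. Here I would argue by a maximum principle on $\Phi$. Near $\eta=-\infty$ one has $\Phi\to 0^-$ and $\Phi'<0$, so it suffices to show $\Phi''\le 0$ at every point where $\Phi'=0$, which prevents $\Phi'$ from crossing from negative to positive. Differentiating the identity above once more and simplifying at a critical point, using $N=\Phi\sqrt{D}$ and $D+N=-\psi$, yields the clean expression
\[
\Phi''\big|_{\Phi'=0} = \Phi\left(G-\frac{\epsilon W}{2}\,\frac{\psi}{D}\right).
\]
Since $\Phi<0$, $G\ge 0$, $W\ge 0$ and $\psi\le 0$ from the second paragraph, the bracket is nonnegative and therefore $\Phi''\le 0$ at the critical point, which closes the argument.

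The main obstacle is precisely this expanding term: it forces the stronger concavity $\ddot f\le \epsilon\dot f^2/\big(2(C+\epsilon f)\big)$ rather than merely $\ddot f\le 0$, and it is what makes the second-order test necessary. Two technical points must then be secured. First, $\Phi$ must be well defined and strictly negative, i.e. $Q<0$ and $H<1$ throughout: strict negativity of $Q$ for a non-Einstein soliton is \cite[Corollary 2.3]{chen_strong_2009}, and $H<1$ follows because $H'=Q<0$ whenever $H=1$, so $H$ cannot return to $1$ after leaving it. Second, one must handle the degenerate subcase in which the bracket vanishes, which forces $G=0$, i.e. $X_1=X_2=X_3=0$; along $\xi(k,\theta,s_4,s_5)$ in $\mathcal{B}$ this configuration should be excluded (or treated by a higher-order test) to preserve strictness. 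As a consistency check, for $\epsilon=1$ the substitution $v:=\sqrt{-(C+f)}$ gives $\Phi=-2\dot v$, so the proposition becomes the convexity of $v$ in $t$; I would nonetheless keep the maximum-principle formulation, since it treats $\epsilon=0$ and $\epsilon=1$ uniformly.
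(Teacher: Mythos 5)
Your three key identities are all correct --- I verified the quotient-rule formula $D^{3/2}\Phi'=(H-1)Q-Q^2+\tfrac{\epsilon}{2}(H-1)^2W$, the linear equation $\psi'=\psi(2G-\epsilon W-1)+(H-1)\bigl(G+\tfrac{\epsilon}{2}W\bigr)$, and the critical-point expression $\Phi''\big|_{\Phi'=0}=\Phi\bigl(G-\tfrac{\epsilon W}{2}\tfrac{\psi}{D}\bigr)$ --- and your steady case is settled exactly as in the paper. For the expanding case, however, you take a genuinely different route. The paper never works with the unweighted $\psi=Q+1-H$: it proves the \emph{stronger} first-order inequality \eqref{eqn_negative quantity}, namely $Q+1-H+\tfrac{\epsilon}{2}W(1-H)<0$, by precisely the Gronwall-type argument you apply to $\psi$; by \eqref{eqn_interesting function} the derivative of this weighted quantity is itself times $2G-\tfrac{3\epsilon}{2}W-1$ plus $(1-H)\bigl(\tfrac{\epsilon}{2}W-1\bigr)G$, and the inhomogeneous term is non-positive in $\mathcal{B}$ because $(n-1)\tfrac{\epsilon}{2}W=Q+1-G-R_s\le 1$ there. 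Substituting $\tfrac{\epsilon}{2}W(1-H)^2\le(H-1-Q)(1-H)$ into your numerator collapses it to $-(Q+1-H)^2$, so $\Phi'\le (Q+1-H)^2/\bigl(Q\sqrt{-Q}\bigr)\le 0$ pointwise, with no second-derivative test at all. In your own comparison, the paper's inequality $(1-H)\bigl(1+\tfrac{\epsilon}{2}W\bigr)<-Q$ implies your sharp condition $(1-H)\bigl(1+\sqrt{1+2\epsilon W}\bigr)\le 2(-Q)$ since $\sqrt{1+2\epsilon W}\le 1+\epsilon W$. What your route buys: the first-order step uses only $H\le 1$ and $G,W\ge 0$, hence holds on all of $\mathcal{RS}$ without invoking $R_s\ge 0$ in $\mathcal{B}$. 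What the paper's buys: a purely first-order, two-line pointwise conclusion with no maximum-principle bookkeeping.

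The degenerate subcase you flag is a genuine loose end in your write-up, but it closes in two lines and you should include them. At a first crossing $\eta_0$ where $\Phi'$ would turn positive, $\Phi''(\eta_0)\ge 0$ together with your inequality forces the bracket to vanish, i.e.\ $G=0$ and $\epsilon W\psi=0$ at $\eta_0$. But $G=0$ gives $H=0$, hence $H-1=-1$, and $\Phi'(\eta_0)=0$ then reads $(-Q)\psi=-\tfrac{\epsilon}{2}W$; if $\psi(\eta_0)=0$ this already forces $W(\eta_0)=0$, so in either branch $W(\eta_0)=0$. For $\epsilon=1$ with $s_5>0$ this cannot occur at finite $\eta$, since $W\sim 2s_5e^{2\eta}>0$ and $W'=2W\bigl(G-\tfrac{\epsilon}{2}W\bigr)$ preserves positivity; and if $W\equiv 0$ (in particular if $s_5=0$) the numerator reduces to $(-Q)\psi\le 0$ and your steady argument applies verbatim, so no second-order test is needed in that branch. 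With the degenerate case excluded, $\Phi''<0$ strictly at every zero of $\Phi'$, and your non-return argument is complete.
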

\begin{proof}
We first claim 
\begin{equation}
\label{eqn_negative quantity}
Q+1-H+\frac{\epsilon}{2}W(1-H)<0
\end{equation}
 along each $\xi(k, \theta, s_4,s_5)$ in $\mathcal{B}$ with $s_4>0$. By \eqref{eqn_linearized Q-H+1}, we have $Q+1-H+\frac{\epsilon}{2}W(1-H)\sim -s_4e^{\eta}$. Hence, the function is initially negative if $s_4>0$. By \eqref{eqn_characteriaztion_of_non_trivial_soliton}, we have 
\begin{equation}
\label{eqn_interesting function}
\begin{split}
&\left(Q+1-H+\frac{\epsilon}{2}W(1-H)\right)'\\
&= 2Q\left(G-\frac{\epsilon}{2}W\right)+\epsilon(H-1)W+2\frac{\epsilon}{2}W\left(G-\frac{\epsilon}{2}W\right)(1-H)\\
&\quad -\left(\frac{\epsilon}{2}W+1\right)\left((H-1)\left(G-\frac{\epsilon}{2}W-1\right)+Q\right)\\
&= \left(Q+1-H+\frac{\epsilon}{2}W(1-H)\right)\left(2G-\frac{3\epsilon}{2}W-1\right) +(1-H)\left(\frac{\epsilon}{2}W-1\right)G.\\
\end{split}
\end{equation}
For $\epsilon=0$, the last term is non-positive in $\mathcal{RS}$. For $\epsilon=1$, note that $R_s\geq 0$ in $\mathcal{B}$ and hence $(n-1)\frac{\epsilon}{2}W=Q+1-G-R_s\leq 1$. Therefore, the last term is still non-positive. The inequality $Q+1-H+\frac{\epsilon}{2}W(1-H)< 0$ holds.

We have
\begin{equation}
\begin{split}
\left(\frac{H-1}{\sqrt{-Q}}\right)'&=\frac{-Q(Q+1-H)+\frac{\epsilon}{2}W(1-H)^2}{\sqrt{-Q}(-Q)}\\
&\leq \frac{-Q(Q+1-H)+(H-1-Q)(1-H)}{\sqrt{-Q}(-Q)}\quad \text{by \eqref{eqn_negative quantity}}\\
&=\frac{(Q+1-H)^2}{Q\sqrt{-Q}}\\
&\leq 0.
\end{split}
\end{equation}
The proof is complete.
\end{proof}

\begin{proposition}
\label{prop_bounded 1-H and Q}
Along each $\xi(k, \theta, s_4,s_5)$ with $s_4>0$ while the integral curve is in $\mathcal{B}$, there exists an $\eta_*$ and a $C_1>0$ small enough such that $\sqrt{-Q}\geq C_1$ and $1-H\geq C_1^2$ for $\eta>\eta_*$.
\end{proposition}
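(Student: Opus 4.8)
The plan is to read off both bounds as an essentially algebraic consequence of Proposition \ref{prop_concave potential} and of the strict inequality \eqref{eqn_negative quantity} already established in its proof, so that no new differential inequality is needed. Write $v:=-Q$ and $w:=1-H$. Since $\frac{H-1}{\sqrt{-Q}}=-\frac{w}{\sqrt{v}}$ decreases monotonically while $\xi(k,\theta,s_4,s_5)$ lies in $\mathcal{B}$, the ratio $\frac{1-H}{\sqrt{-Q}}=\frac{w}{\sqrt{v}}$ is nondecreasing there. Rearranging \eqref{eqn_negative quantity} gives
$$-Q>(1-H)\left(1+\tfrac{\epsilon}{2}W\right)\geq 1-H,$$
so $v>w$ holds throughout, and in particular $v>0$, which is what makes the ratio well defined.

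First I would pin down a finite base point at which the ratio is strictly positive. Each $\xi(k,\theta,s_4,s_5)$ is initially in $\mathcal{B}$ by Lemma \ref{lem_B, C or D}, and \eqref{eqn_Q and 1-H} gives $\frac{H-1}{\sqrt{-Q}}\to 0$ as $\eta\to-\infty$. The decrease is in fact strict: in the proof of Proposition \ref{prop_concave potential} the derivative is bounded above by $\frac{(Q+1-H)^2}{Q\sqrt{-Q}}<0$, the strictness coming from $Q+1-H=w-v<0$. Hence $\frac{H-1}{\sqrt{-Q}}<0$ at every finite $\eta$, so for any finite $\eta_*$ with the curve still in $\mathcal{B}$ the constant $c_0:=\frac{1-H}{\sqrt{-Q}}(\eta_*)$ is strictly positive and $w>0$ there.

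Next I would bootstrap. For $\eta>\eta_*$ the monotonicity yields $\frac{1-H}{\sqrt{-Q}}\geq c_0$, that is $w\geq c_0\sqrt{v}$. Combining with $v>w$ gives $v>w\geq c_0\sqrt{v}$, hence $\sqrt{v}>c_0$, i.e.\ $\sqrt{-Q}>c_0$. Substituting this back into $w\geq c_0\sqrt{v}$ yields $1-H=w>c_0^2$. Setting $C_1:=c_0$ (or any smaller positive constant) then gives $\sqrt{-Q}\geq C_1$ and $1-H\geq C_1^2$ for all $\eta>\eta_*$ while the curve remains in $\mathcal{B}$, as required.

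I do not expect a genuine obstacle here, since all the analytic content is supplied by Proposition \ref{prop_concave potential} and \eqref{eqn_negative quantity}. The only points needing care are that the ratio $\frac{1-H}{\sqrt{-Q}}$ is well defined and strictly positive at a finite $\eta_*$—guaranteed by $v>w\geq 0$ and by the strict decrease of $\frac{H-1}{\sqrt{-Q}}$ away from its vanishing limit at $-\infty$—and that the hypothesis $s_4>0$ is used precisely to secure \eqref{eqn_negative quantity}, and thereby the strict inequality $v>w$ that drives the self-improving estimate for $\sqrt{-Q}$.
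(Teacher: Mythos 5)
Your proposal is correct and takes essentially the same route as the paper: both use the monotonicity from Proposition \ref{prop_concave potential} to fix $C_1=\left(\frac{1-H}{\sqrt{-Q}}\right)(\eta_*)>0$, then combine this with \eqref{eqn_negative quantity} to bootstrap the two bounds — your chain $v>w\geq C_1\sqrt{v}\Rightarrow\sqrt{v}>C_1\Rightarrow w>C_1^2$ is exactly the paper's inequality $0\geq -\frac{\epsilon}{2}W(1-H)>Q+1-H\geq\sqrt{-Q}\left(C_1-\sqrt{-Q}\right)$. Your extra care in justifying that the ratio is strictly positive at a finite $\eta_*$ (via the asymptotics \eqref{eqn_Q and 1-H} and strict decrease) is a minor refinement of a point the paper leaves implicit.
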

\begin{proof}
By Proposition \ref{prop_concave potential}, there exists an small enough $C_1>0$ and an $\eta_*$ such that $\frac{1-H}{\sqrt{-Q}}\geq C_1$ for $\eta>\eta_*$, where $C_1=\left(\frac{1-H}{\sqrt{-Q}}\right)(\eta_*)>0$. By \eqref{eqn_negative quantity}, we have
\begin{equation}
\begin{split}
0&\geq -\frac{\epsilon}{2}W(1-H)>Q+1-H\geq \sqrt{-Q}(C_1-\sqrt{-Q})
\end{split}
\end{equation}
for $\eta>\eta_*$ while the integral curve is in $\mathcal{B}$.
Therefore, the inequalities $\sqrt{-Q}\geq C_1$ and $1-H\geq C_1^2$ hold.
\end{proof}

We proceed to prove the following lemma, which is a generalization to \cite[Lemma 5.4]{appleton_family_2017}.
\begin{lemma}
\label{lem_long exiting xik}
For each fixed $k\geq 3$ and $\theta\in\left[0,\pi\right]$, there exists an $\tilde{\alpha}_{k,\theta}\geq 0$ such that each $\xi(k, \theta, s_4,s_5)$ with $s_5\geq 0$ stays in $\mathcal{B}$ or eventually enters $\mathcal{A}$ if $s_4\geq \tilde{\alpha}_{k,\theta}$. Each $\xi(k, \theta, s_4,s_5)$ with $s_4\geq \tilde{\alpha}_{k,\theta}$ and $s_5\geq 0$ is thus defined on $\mathbb{R}$.
\end{lemma}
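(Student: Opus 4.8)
The plan is to run a shooting argument in the parameter $s_4$, built on the trichotomy of Lemma \ref{lem_B, C or D}: since each $\xi(k,\theta,s_4,s_5)$ starts in the compact set $\mathcal{B}$, it either stays in $\mathcal{B}$ for all $\eta$, enters the compact invariant set $\mathcal{A}$, or enters $\mathcal{C}$. In the first two cases the curve remains in the compact set $\mathcal{A}\cup\mathcal{B}$ for all forward $\eta$, so the escape lemma forces it to be defined on $\mathbb{R}$; the backward direction is automatic since the curve emanates from $p_0$. Thus the \emph{only} obstruction is entering $\mathcal{C}$, and the point is to produce a threshold above which this never happens, uniformly in $s_5\geq 0$. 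The natural definition is
$$\tilde{\alpha}_{k,\theta}:=\sup\left\{s_4\geq 0\ \middle|\ \exists\, s_5\geq 0 \text{ such that } \xi(k,\theta,s_4,s_5)\text{ enters }\mathcal{C}\right\},$$
with $\sup\emptyset=0$, so the two things to establish are that this supremum is finite and that it is not attained in the bad set.

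The key estimate is that a large $s_4$ prevents $Z_1$ from ever reaching $1$, so the curve can never cross into $\mathcal{C}$ through $\{1-Z_1=0\}\cap\{X_1-X_2>0\}$. While the curve is in $\mathcal{B}$, Proposition \ref{prop_concave potential} makes $\tfrac{1-H}{\sqrt{-Q}}$ nondecreasing, and Proposition \ref{prop_bounded 1-H and Q} then supplies an $\eta_*$ and a $C_1=\tfrac{1-H}{\sqrt{-Q}}(\eta_*)$ with $-Q\geq C_1^2$ for $\eta\geq\eta_*$. Feeding $-Q\geq C_1^2$ into the conservation law $Q=G+R_s+(n-1)\tfrac{\epsilon}{2}W-1$ and using $R_s\geq 0$, $W\geq 0$ on $\mathcal{B}$ gives $G-\tfrac{\epsilon}{2}W-1\leq G-1\leq -C_1^2$. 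Combined with $Z_1\leq 1$ and $X_1-X_2\geq 0$ on $\mathcal{B}$, the evolution \eqref{eqn_D set boundary} yields the differential inequality $(X_1-X_2)'\leq -C_1^2\,(X_1-X_2)$, so $X_1-X_2$ decays at least like $e^{-C_1^2(\eta-\eta_*)}$. Integrating $(\ln Z_1)'=2(X_1-X_2)$ then bounds the total growth of $Z_1$:
$$Z_1(\eta)\leq Z_1(\eta_*)\exp\!\left(\frac{2\,(X_1-X_2)(\eta_*)}{C_1^2}\right)\qquad\text{while the curve stays in }\mathcal{B}.$$
A bootstrap closes the argument: on the maximal interval where the curve is in $\mathcal{B}$ the right-hand side stays below $1$, so the curve can only leave $\mathcal{B}$ through $\{X_1-X_2=0\}$ into $\mathcal{A}$, never through $\{Z_1=1\}$ into $\mathcal{C}$. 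Note also that $X_1-X_2\leq X_1\leq\sqrt{G}\leq 1$ on $\mathcal{B}$, so the numerator in the exponent is harmless.

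The main obstacle is to force the displayed bound strictly below $1$ uniformly in $s_5$, and this hinges on a trade-off in the choice of $\eta_*$: taking $\eta_*$ later enlarges $C_1$ but also enlarges $Z_1(\eta_*)$. Here I would exploit the linearized behaviour near $p_0$, where \eqref{eqn_Q and 1-H} and \eqref{eqn_genearllinarized_Pcp_2} give $-Q\sim 2s_4e^{2\eta}$, $1-H\sim s_4 e^{2\eta}$ and $Z_1\sim k^2(1+\sin(\theta))e^{2\eta}$, all independent of $s_5$ to leading order because $w_5$ is orthogonal to $\nabla Q$ and $\nabla H$ at $p_0$ and has vanishing fourth entry. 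Choosing $\eta_*$ with $e^{2\eta_*}\sim 1/s_4$ keeps $C_1$ bounded below by a fixed positive constant while driving $Z_1(\eta_*)\sim k^2(1+\sin(\theta))/s_4\to 0$; since $(X_1-X_2)(\eta_*)\leq 1$, the bound on $Z_1$ becomes $<1$ once $s_4$ exceeds an explicit multiple of $k^2(1+\sin(\theta))$. The favourable sign of the damping term $-\tfrac{\epsilon}{2}W$ in \eqref{eqn_D set boundary}, together with the bound on $W$ on $\mathcal{B}$, confines the delicate behaviour to a bounded range of $s_5$, on which the linearized estimates are uniform; this is exactly the step I expect to require the most care. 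It shows the bad set has bounded $s_4$, so $\tilde{\alpha}_{k,\theta}<\infty$.

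Finally, entering $\mathcal{C}$ is an open condition in $(s_4,s_5)$, since the crossing of $\{Z_1=1\}$ is transversal ($Z_1'=2Z_1(X_1-X_2)>0$ there) and continuous dependence on parameters propagates it to a neighbourhood. Hence the supremum defining $\tilde{\alpha}_{k,\theta}$ is not attained, so for every $s_4\geq\tilde{\alpha}_{k,\theta}$ and every $s_5\geq 0$ the curve avoids $\mathcal{C}$; by Lemma \ref{lem_B, C or D} it stays in $\mathcal{B}$ or enters $\mathcal{A}$, and by the escape lemma applied to the compact set $\mathcal{A}\cup\mathcal{B}$ it is defined on all of $\mathbb{R}$.
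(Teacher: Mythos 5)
Your proposal is correct, and it follows the paper's overall skeleton---the trichotomy of Lemma \ref{lem_B, C or D}, Propositions \ref{prop_concave potential} and \ref{prop_bounded 1-H and Q} to extract the decay rate $C_1^2$, and a combined choice of $\eta_*$ and large $s_4$ to close the bound $Z_1<1$---but you implement the key decay estimate differently. The paper introduces the auxiliary quantity $\frac{\sqrt{Z_1}'}{\sqrt{-Q}}=\frac{\sqrt{Z_1}(X_1-X_2)}{\sqrt{-Q}}$, proves by the separate computation \eqref{eqn_second derivative of Z1/Q} that it decreases in $\mathcal{B}$ (using $R_1-R_2=(4Z_2+4mZ_3)(Z_1-1)\leq 0$ and the sign of the $W$-term), evaluates its limit at $\eta=-\infty$ via the linearization to obtain the $s_5$-uniform bound $C_2\leq \frac{k}{\sqrt{s_4}}$, and then integrates $\sqrt{Z_1}'\leq \frac{k}{\sqrt{s_4}}e^{-C_1^2(\eta-\eta_*)}$ \emph{additively} to get $\sqrt{Z_1}\leq \frac{k}{\sqrt{s_4}}\frac{1}{C_1^2}+\sqrt{Z_1}(\eta_*)$. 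You instead run Gronwall directly on $X_1-X_2$ using \eqref{eqn_D set boundary} together with $G-\frac{\epsilon}{2}W-1\leq Q\leq -C_1^2$ (valid in $\mathcal{B}$ since $R_s\geq 0$ and $W\geq 0$), and integrate $(\ln Z_1)'=2(X_1-X_2)$ \emph{multiplicatively}; this eliminates the paper's auxiliary monotone quantity and the limit computation at $-\infty$, at the cost of needing $Z_1(\eta_*)$ itself small, which you arrange by scaling $e^{2\eta_*}\sim 1/s_4$ so that $C_1$ stays bounded below while $Z_1(\eta_*)\sim k^2(1+\sin\theta)/s_4\to 0$. That scaling is implicitly what the paper does as well, since its expansion $C_1^2=\frac{s_4}{2}e^{2\eta_*}+O(e^{(2+\delta)\eta_*})$ is only useful with $e^{2\eta_*}$ of order $1/s_4$; so your argument is no less rigorous, and both proofs share the caveat---which you explicitly flag---that the remainder in \eqref{eqn_genearllinarized_Pcp_2} must be controlled uniformly over the parameter range where $\eta_*$ is chosen, in particular in $s_5\geq 0$ (your leading-order inputs, namely $\langle\nabla Q(p_0),w_5\rangle=\langle\nabla H(p_0),w_5\rangle=0$ and the vanishing $Z_1$-entry of $w_5$, are exactly the right ones, but neither your text nor the paper bounds the error term uniformly in $s_5$). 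Your packaging of $\tilde{\alpha}_{k,\theta}$ as the supremum of the bad set, with non-attainment via openness of ``enters $\mathcal{C}$'' (transversal crossing of $\{Z_1=1\}$ where $Z_1'=2Z_1(X_1-X_2)>0$, plus continuous dependence), is sound and is the same continuity mechanism the paper deploys later in Lemma \ref{lem_for thm 1.2 1.3}, whereas the paper simply takes $\tilde{\alpha}_{k,\theta}$ to be any sufficiently large threshold; the completeness conclusion via compactness of $\mathcal{A}\cup\mathcal{B}$ and the escape lemma matches the paper.
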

\begin{proof}
Initially the integral curve $\xi(k, \theta, s_4,s_5)$ is in $\mathcal{B}$. For $s_4>0$, the function $Q$ is initially negative. Furthermore, as $R_i\geq 0$ in $\mathcal{B}$, we have $-Q\in (0,1)$ along $\xi(k, \theta, s_4,s_5)$ in $\mathcal{B}$. We have
\begin{equation}
\label{eqn_second derivative of Z1/Q}
\begin{split}
\left(\frac{\sqrt{Z_1}'}{\sqrt{-Q}}\right)'&= \left(\frac{\sqrt{Z_1}'}{\sqrt{-Q}}\right)(X_1-X_2-1)+\frac{R_1-R_2}{\sqrt{-Q}}+\frac{\frac{\epsilon}{2}W(1-H)\sqrt{Z_1}'}{Q\sqrt{-Q}}\\
&\leq \left(\frac{\sqrt{Z_1}'}{\sqrt{-Q}}\right)(X_1-X_2-1)\\
&\leq \left(\frac{\sqrt{Z_1}'}{\sqrt{-Q}}\right)(H-1)\leq 0.
\end{split}
\end{equation}
The function $\frac{\sqrt{Z_1}'}{\sqrt{-Q}}$ monotonically decreases while $\xi(k, \theta, s_4,s_5)$ is in $\mathcal{B}$. 

By Proposition \ref{prop_bounded 1-H and Q}, there exists a small enough $\eta_*$ such that 
\begin{equation}
\begin{split}
\left(\frac{\sqrt{Z_1}'}{\sqrt{-Q}}\right)'&\leq -\left(\frac{\sqrt{Z_1}'}{\sqrt{-Q}}\right)C_1^2,\quad C_1=\left(\frac{1-H}{\sqrt{-Q}}\right)(\eta_*)
\end{split}
\end{equation}
holds along $\xi(k, \theta, s_4,s_5)$ in $\mathcal{B}$ for $\eta>\eta_*$. We have
\begin{equation}
\frac{\sqrt{Z_1}'}{\sqrt{-Q}}\leq C_2 e^{-C_1^2(\eta-\eta_*)},\quad C_2=\left(\frac{\sqrt{Z_1}'}{\sqrt{-Q}}\right)(\eta_*)
\end{equation}
By \eqref{eqn_genearllinarized_Pcp} and the monotonicity of $\frac{\sqrt{Z_1}'}{\sqrt{-Q}}$, we have 
\begin{equation}
\begin{split}
C_2&\leq \lim\limits_{\eta\to-\infty}\left(\frac{\sqrt{Z_1}'}{\sqrt{-Q}}\right)(\xi(k, \theta, s_4,s_5))\\
&=\lim\limits_{\eta\to-\infty}\left(\frac{\sqrt{Z_1}(X_1-X_2)}{\sqrt{-Q}}\right)(\xi(k, \theta, s_4,s_5))=\frac{(\sqrt{s_1}+\sqrt{s_2})k}{\sqrt{2s_4}}.
\end{split}
\end{equation}
By \eqref{eqn_s1s2s3 in theta}, we have $C_2\leq \frac{k}{\sqrt{s_4}}$.
It follows that
\begin{equation}
\label{eqn_Z1'}
\begin{split}
\sqrt{Z_1}'&\leq \frac{k}{\sqrt{s_4}}\sqrt{-Q} e^{-C_1^2(\eta-\eta_*)}\leq \frac{k}{\sqrt{s_4}} e^{-C_1^2(\eta-\eta_*)}
\end{split}
\end{equation}

Integrating \eqref{eqn_Z1'} for both sides, we obtain
\begin{equation}
\begin{split}
\sqrt{Z_1}&\leq \frac{k}{\sqrt{s_4}} \frac{1-e^{-C_1^2(\eta-\eta_*)}}{C_1^2}+(\sqrt{Z_1})(\eta_*)\leq \frac{k}{\sqrt{s_4}} \frac{1}{C_1^2}+(\sqrt{Z_1})(\eta_*)
\end{split}
\end{equation}
for $\eta>\eta_*$ while $\xi(k, \theta, s_4,s_5)$ is in $\mathcal{B}$. By \eqref{eqn_Q and 1-H}, we have
$$C_1^2=\left(\frac{(1-H)^2}{-Q}\right)(\eta_*)=\frac{s_4}{2}e^{2\eta_*}+O(e^{(2+\delta)\eta_*}).$$
Choose a small enough $\eta_*$ and a large enough $s_4$, the inequality $\sqrt{Z}_1\leq 1$ holds along $\xi(k, \theta, s_4,s_5)$ while the integral curve is in $\mathcal{B}$. Therefore, the integral curve stays in $\mathcal{B}$, or eventually enters $\mathcal{A}$ by Lemma \ref{lem_B, C or D}. The existence of $\tilde{\alpha}_{k,\theta}$ is established.
\end{proof}

For each $k\geq 3$ and $\theta\in [0,\pi]$, define
\begin{align}
\alpha_{k,\theta}&=\inf\left\{\tilde{\alpha}_{k,\theta}\mid \text{$\xi(k,\theta,s_4,0)$ does not enter $\mathcal{C}$ for any $s_4\geq \tilde{\alpha}_{k,\theta}$}\right\},\\
\beta_{k,\theta}&=\inf\left\{\tilde{\alpha}_{k,\theta}\mid \text{$\xi(k,\theta,s_4,s_5)$ does not enter $\mathcal{C}$ for any $s_4\geq \tilde{\alpha}_{k,\theta}$ and $s_5\geq 0$ }\right\}.
\end{align}
By Lemma \ref{lem_long exiting xik}, such $\alpha_{k,\theta}$ and $\beta_{k,\theta}$ exist. It is clear that $\beta_{k,\theta}\geq \alpha_{k,\theta}$. 
\begin{lemma}
\label{lem_for thm 1.2 1.3}
For each fixed $k\geq 3$ and $\theta\in\left[0,\pi\right]$, the integral curve $\xi(k,\theta,\beta_{k,\theta},s_5)$ does not enters $\mathcal{C}$ for any $s_5\geq 0$. The integral curve  $\xi(k,\theta,\alpha_{k,\theta},0)$ stays in $\mathcal{B}$ if $\alpha_{k,\theta}>0$.
\end{lemma}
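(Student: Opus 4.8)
The plan is to read both assertions as endpoint statements for a shooting interval, combining continuous dependence of the family $\xi(k,\theta,s_4,s_5)$ on the parameters $(s_4,s_5)$ with the transversality of the two exit faces identified in Lemma~\ref{lem_B, C or D}. First I would record that both exits are transversal crossings occurring at finite $\eta$. An integral curve enters $\mathcal{C}$ only through $\mathcal{B}\cap\{1-Z_1=0\}\cap\{X_1-X_2>0\}$, where $Z_1'=2Z_1(X_1-X_2)=2(X_1-X_2)>0$, so $Z_1$ strictly increases across the level $Z_1=1$; it enters $\mathcal{A}$ only through $\mathcal{B}\cap\{1-Z_1>0\}\cap\{X_1-X_2=0\}$, where by \eqref{eqn_D set boundary} one has $(X_1-X_2)'=(4Z_2+4mZ_3)(Z_1-1)<0$, using $Z_1<1$ and $Z_2>0$. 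Consequently, ``entering $\mathcal{C}$'' and ``entering $\mathcal{A}$'' are both open conditions on the parameters, and (as in Lemma~\ref{lem_B, C or D}, via invariance of $\mathcal{RS}_{\text{round}}$ and of $\mathcal{A},\mathcal{C}$) the three outcomes ``stays in $\mathcal{B}$'', ``enters $\mathcal{A}$'', ``enters $\mathcal{C}$'' are mutually exclusive.

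For the first assertion I would argue by contradiction. By the infimum definition of $\beta_{k,\theta}$, the admissible thresholds form an up-set, so every $\xi(k,\theta,s_4,s_5)$ with $s_4>\beta_{k,\theta}$ and $s_5\geq 0$ avoids $\mathcal{C}$. Suppose $\xi(k,\theta,\beta_{k,\theta},s_5^*)$ entered $\mathcal{C}$ for some $s_5^*\geq 0$. By the transversality recorded above this occurs at a finite $\eta_*$, at which point the curve lies in the compact set $\mathcal{B}$ on $(-\infty,\eta_*]$; hence by continuous dependence on $(s_4,s_5)$ on the compact $\eta$-interval, all curves with parameters near $(\beta_{k,\theta},s_5^*)$ also enter $\mathcal{C}$. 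In particular $\xi(k,\theta,\beta_{k,\theta}+\delta,s_5^*)$ enters $\mathcal{C}$ for small $\delta>0$, contradicting the previous sentence. Thus $\xi(k,\theta,\beta_{k,\theta},s_5)$ avoids $\mathcal{C}$ for every $s_5\geq 0$.

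For the second assertion, the identical continuity argument with $s_5=0$ fixed first shows that $\xi(k,\theta,\alpha_{k,\theta},0)$ does not enter $\mathcal{C}$; by Lemma~\ref{lem_B, C or D} it therefore either stays in $\mathcal{B}$ or enters $\mathcal{A}$, and I must exclude the latter when $\alpha_{k,\theta}>0$. Assuming $\alpha_{k,\theta}>0$, I extract a sequence $s_4^{(n)}\uparrow\alpha_{k,\theta}$ with $s_4^{(n)}\in(0,\alpha_{k,\theta})$ for which $\xi(k,\theta,s_4^{(n)},0)$ enters $\mathcal{C}$: for each $s_4<\alpha_{k,\theta}$ the value $s_4$ is not an admissible threshold, so some $s_4'\geq s_4$ gives a curve entering $\mathcal{C}$, and since every parameter value in $[\alpha_{k,\theta},\infty)$ avoids $\mathcal{C}$ (using the case $s_4'=\alpha_{k,\theta}$ just proved), necessarily $s_4'\in[s_4,\alpha_{k,\theta})$. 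If $\xi(k,\theta,\alpha_{k,\theta},0)$ entered $\mathcal{A}$, openness of that condition would force an entire neighborhood of $\alpha_{k,\theta}$ to enter $\mathcal{A}$, so $\xi(k,\theta,s_4^{(n)},0)$ would enter $\mathcal{A}$ for large $n$, contradicting mutual exclusivity with entering $\mathcal{C}$. Hence $\xi(k,\theta,\alpha_{k,\theta},0)$ cannot enter $\mathcal{A}$ and must stay in $\mathcal{B}$.

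The points where I expect the real work are (i) justifying continuous dependence of the parametrized family on $(s_4,s_5)$ up to the finite exit time, which rests on the smooth parametrization of the unstable manifold of $p_0$ in \eqref{eqn_genearllinarized_Pcp_2} together with standard parameter dependence on compact $\eta$-intervals, and (ii) the strict sign $Z_2>0$ that makes the $\mathcal{A}$-exit genuinely transversal, which holds because $Z_2\sim(s_1+s_2+\sqrt{2}s_3)e^{2\eta}>0$ near $p_0$ and $Z_2'=2Z_2(G-\frac{\epsilon}{2}W-X_2)$ keeps $Z_2$ positive for all finite $\eta$. The conceptual crux is the extraction of the approach sequence $s_4^{(n)}\uparrow\alpha_{k,\theta}$ from the infimum together with the mutual exclusivity of the three outcomes; the transversality computations themselves are routine.
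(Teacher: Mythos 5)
Your proof is correct and takes essentially the same route as the paper: both assertions are proved by contradiction, combining continuous dependence on parameters (openness of the ``enters $\mathcal{A}$'' and ``enters $\mathcal{C}$'' conditions) with the infimum definitions of $\beta_{k,\theta}$ and $\alpha_{k,\theta}$ and the trichotomy of Lemma \ref{lem_B, C or D}. Your explicit transversality computations at the exit faces and the extraction of the sequence $s_4^{(n)}\to\alpha_{k,\theta}$ from below only spell out what the paper leaves implicit under ``continuous dependence on parameters''; the underlying argument is the same.
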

\begin{proof}
We first prove the statement for $\beta_{k,\theta}$. Suppose otherwise, there exists ${s_{5}}_*\geq 0$ such that $\xi(k,\theta,\beta_{k,\theta},{s_{5}}_*)$ eventually enters $\mathcal{C}$. By the the continuous dependence on parameters, there exists a small $\delta>0$ such that $\xi(k,\theta,\beta_{k,\theta}+\delta,s_{5*})$ eventually enters $\mathcal{C}$, a contradiction to the definition of $\beta_{k,\theta}$.

Assume $\alpha_{k,\theta}>0$. The argument for $\beta_{k,\theta}$ carries over so that $\xi(k,\theta,\alpha_{k,\theta},0)$ does not enters $\mathcal{C}$. If $\xi(k,\theta,\alpha_{k,\theta},0)$ enters $\mathcal{A}$, there exists a small $\delta>0$ such that $\alpha_{k,\theta}-\delta>0$ and $\xi(k,\theta,s_4,0)$ does not enter $\mathcal{C}$ and enters $\mathcal{A}$ if $s_4\in (\alpha_{k,\theta}-\delta,\alpha_{k,\theta}]$, a contradiction to the definition of $\alpha_{k,\theta}$.
\end{proof}

\begin{lemma}
\label{lem_for thm 1.4}
If $k\geq 3$, there are infinitely many $\theta \in (0,\pi)$ such that $\xi(k, \theta, s_4 ,0)$ stays in $\mathcal{B}$ for some $s_4>0$.
If $k\in[3,2m+1]$, there exists some $\theta_\star \in (0,\pi)$ such that $\xi(k, \theta_\star, 0,0)$ stays in $\mathcal{B}$ and is defined on $\mathbb{R}$.
\end{lemma}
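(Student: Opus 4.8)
The plan is to run a connectedness (shooting) argument in the parameter $\theta$, resting on the trichotomy of Lemma \ref{lem_B, C or D} and the endpoint behavior of Proposition \ref{prop_where do xi go}. Write $\Theta_A$ (respectively $\Theta_C$) for the set of $\theta\in[0,\pi]$ such that $\xi(k,\theta,0,0)$ eventually enters $\mathcal{A}$ (respectively $\mathcal{C}$). The first task is to show that $\Theta_A$ and $\Theta_C$ are open and disjoint. Disjointness is immediate: $\mathcal{A}$ and $\mathcal{C}$ are invariant, and $\mathcal{A}\cap\mathcal{C}\subseteq\{Z_1=1\}\cap\{X_1-X_2=0\}=\mathcal{RS}_{\text{round}}$, which no $\xi(k,\theta,s_4,s_5)$ meets since $\mathcal{RS}_{\text{round}}$ is invariant and $p_0\notin\mathcal{RS}_{\text{round}}$; hence a single curve cannot enter both $\mathcal{A}$ and $\mathcal{C}$.

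For openness I would argue that the crossings recorded in Lemma \ref{lem_B, C or D} are transversal. If $\xi(k,\theta_0,0,0)$ enters $\mathcal{C}$ through $\mathcal{B}\cap\{1-Z_1=0\}\cap\{X_1-X_2>0\}$ at a finite $\eta_0$, then $Z_1'=2Z_1(X_1-X_2)>0$ there, so $Z_1$ crosses the level $1$ strictly; similarly, entry into $\mathcal{A}$ through $\mathcal{B}\cap\{1-Z_1>0\}\cap\{X_1-X_2=0\}$ is transversal by \eqref{eqn_D set boundary}, where $(X_1-X_2)'=(4Z_2+4mZ_3)(Z_1-1)<0$ at the crossing (using $Z_1<1$ and that $Z_2,Z_3$ do not both vanish). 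A transversal crossing persists under small perturbations, so combined with continuous dependence on $\theta$ — valid on compact $\eta$-intervals once a reference time is fixed and the expansion \eqref{eqn_genearllinarized_Pcp_2} is used to control the curves uniformly near the emanation point $p_0$ — this shows both $\Theta_A$ and $\Theta_C$ are open.

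With openness in hand, the Ricci-flat statement follows quickly. For $k\in[3,2m+1]$, Proposition \ref{prop_where do xi go} gives $0\in\Theta_A$ and $\pi\in\Theta_C$. Since $\Theta_A$ and $\Theta_C$ are disjoint nonempty open subsets of the connected interval $[0,\pi]$, they cannot cover it, so there is $\theta_\star\in[0,\pi]\setminus(\Theta_A\cup\Theta_C)$; as $0\in\Theta_A$ and $\pi\in\Theta_C$, necessarily $\theta_\star\in(0,\pi)$. By the trichotomy, $\xi(k,\theta_\star,0,0)$ then stays in $\mathcal{B}$, and since $\mathcal{B}$ is compact the escape lemma forces this integral curve to be defined on all of $\mathbb{R}$.

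For the first statement I would localize near $\theta=\pi$. By Proposition \ref{prop_where do xi go}, $\pi\in\Theta_C$ for every $k\geq3$, and since $\Theta_C$ is open there is $\delta>0$ with $(\pi-\delta,\pi]\subset\Theta_C$; that is, $\xi(k,\theta,0,0)$ enters $\mathcal{C}$ for all such $\theta$. Because entering $\mathcal{C}$ is an open condition, continuous dependence in the parameter $s_4$ shows $\xi(k,\theta,s_4,0)$ still enters $\mathcal{C}$ for all sufficiently small $s_4\geq0$, which forces $\alpha_{k,\theta}>0$ directly from its defining infimum. Lemma \ref{lem_for thm 1.2 1.3} then yields that $\xi(k,\theta,\alpha_{k,\theta},0)$ stays in $\mathcal{B}$ with $s_4=\alpha_{k,\theta}>0$, giving the desired steady soliton for every $\theta\in(\pi-\delta,\pi)$, an infinite set. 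The step I expect to be the main obstacle is precisely the openness claim: one must check that the boundary crossings are genuinely transversal rather than tangential, and that continuous dependence can be invoked uniformly despite the curves all emanating from the non-hyperbolic point $p_0$ at $\eta=-\infty$, which is where the uniform control afforded by \eqref{eqn_genearllinarized_Pcp_2} becomes essential.
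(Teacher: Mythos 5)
Your proposal is correct, and it splits into two halves of different character relative to the paper. For the Ricci-flat statement ($k\in[3,2m+1]$) your argument is essentially the paper's: Proposition \ref{prop_where do xi go} pins down the endpoint behavior ($\xi(k,0,0,0)$ enters $\mathcal{A}$, $\xi(k,\pi,0,0)$ enters $\mathcal{C}$), and a connectedness/continuous-dependence argument over $\theta\in[0,\pi]$, combined with the trichotomy of Lemma \ref{lem_B, C or D} and the mutual exclusivity of entering $\mathcal{A}$ versus $\mathcal{C}$ (via the invariance of $\mathcal{RS}_{\text{round}}$), produces $\theta_\star$ with $\xi(k,\theta_\star,0,0)$ staying in the compact set $\mathcal{B}$, hence defined on $\mathbb{R}$.

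For the steady statement ($k\geq 3$) your route is genuinely different from the paper's, and arguably cleaner. The paper sets $\alpha:=\max_{\theta\in[0,\pi]}\alpha_{k,\theta}$ and runs the connectedness argument along the segments $\tau\mapsto\bigl((1-\tau)\pi+\tau\theta,\tau\alpha\bigr)$ in the $(\theta,s_4)$-plane, joining $(\pi,0)$ (which enters $\mathcal{C}$) to $(\theta,\alpha)$ (which enters $\mathcal{A}$ or stays in $\mathcal{B}$ by Lemma \ref{lem_long exiting xik}), extracting one parameter $\tau_\theta$ per segment whose curve stays in $\mathcal{B}$; infinitude then comes from distinctness of the resulting curves. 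You instead localize near $\theta=\pi$: openness of the "enters $\mathcal{C}$" condition in $(\theta,s_4)$ around the point $(\pi,0)$ forces $\alpha_{k,\theta}>0$ for all $\theta$ in an interval $(\pi-\delta,\pi)$, and then Lemma \ref{lem_for thm 1.2 1.3} — already proved in the paper — delivers that $\xi(k,\theta,\alpha_{k,\theta},0)$ stays in $\mathcal{B}$. Your version buys three things: it reuses Lemma \ref{lem_for thm 1.2 1.3} instead of repeating a shooting argument; it yields an open interval of admissible $\theta$, so "infinitely many $\theta$" is immediate (in the paper one gets infinitely many distinct curves, and distinctness of their $\theta$-values requires an extra word); and it avoids the quantity $\max_\theta\alpha_{k,\theta}$ altogether, whose finiteness the paper does not actually justify (Lemma \ref{lem_long exiting xik} produces $\tilde\alpha_{k,\theta}$ pointwise in $\theta$, with no uniformity claim). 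The price, which you correctly identify, is that the openness claims must be earned: your transversality computations — $Z_1'=2Z_1(X_1-X_2)>0$ at a crossing into $\mathcal{C}$, and $(X_1-X_2)'=(4Z_2+4mZ_3)(Z_1-1)<0$ at a crossing into $\mathcal{A}$ by \eqref{eqn_D set boundary}, the latter using that $Z_2>0$ at finite $\eta$ because $\{Z_2=0\}$ is invariant and $Z_2>0$ near $p_0$ — together with the uniform control near $p_0$ from \eqref{eqn_genearllinarized_Pcp_2}, supply exactly the "continuous dependence on parameters" that the paper invokes without elaboration, so your proof is in fact more explicit about the one input both arguments share.
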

\begin{proof}
Consider $k\geq 3$. Define $\alpha:=\max\limits_{\theta\in [0,\pi]}\{\alpha_{k,\theta}\}$. Each integral curve in the family
$$\left\{\xi(k,\theta,s_4,0)\mid \theta\in [0,\pi], s_4\geq \alpha\right\}$$ enters $\mathcal{A}$ or stays in $\mathcal{B}$. By Proposition \ref{prop_where do xi go}, the integral curve $\xi(k,\pi,0,0)$ enters $\mathcal{C}$, and thus $\alpha\geq \alpha_{k,0}>0$. Suppose $\xi(k,\theta,\alpha,0)$ enters $\mathcal{A}$. Consider the continuous 1-parameter family 
$$\left\{\xi\left(k,(1-\tau)\pi+\tau\theta,\tau\alpha,0\right) \mid \tau\in [0,1]\right\}.$$
There exists some $\tau_\theta \in (0,1)$ such that $\xi_\star(k,\theta):=\xi\left(k,(1-\tau_\theta)\pi+\tau_\theta \theta,\tau_\theta \alpha,0\right)$ stays in $\mathcal{B}$ by the continuous dependence on parameter. Since $\xi_\star(k,\theta_1)\neq \xi_\star(k,\theta_2)$ if $\theta_1\neq \theta_2$, there are infinitely many such $\xi_\star(k,\theta)$.

If $k\in[3,2m+1]$, the integral curve $\xi(k,0,0,0)$ enters $\mathcal{A}$, and $\xi(k,\pi,0,0)$ enters $\mathcal{C}$ by Proposition \ref{prop_where do xi go}. Consider the continuous 1-parameter family $\left\{\xi\left(k,\theta,0,0\right) \mid \theta\in [0,\pi]\right\}.$ There exists some $\theta_\star \in (0,\pi)$ such that $\xi(k,\theta_\star,0,0)$ stays in $\mathcal{B}$ by the continuous dependence on parameter. The integral curve is defined on $\mathbb{R}$ by the compactness of $\mathcal{B}$.
\end{proof}

\section{Asymptotics}
In general, determining the asymptotic geometry of complete cohomogeneity one Ricci solitons consists of two steps. The first is to identify the limiting critical point of the corresponding integral curve. The second is to extract the asymptotic behavior of the metric from this limiting information. Much of the asymptotic analysis developed in previous works extends to the present setting. We summarize the resulting classification below and provide a brief proof.
\begin{proposition}[Limit Classification]
\label{prop_Limit Classification}
For an integral curve $\xi(k,\theta,s_4,s_5)$ with $s_4,s_5\geq 0$ that remains in $\mathcal{A}\cup\mathcal{B}$, the integral curve converges to:
\begin{enumerate}
\item
$q_0=(0,0,0,\mu^2,0,0,0,0)$ for some $\mu\in [0,1]$ if $s_4>0$. 
 \item
$q_1=\left(0,\frac{1}{n-1},\frac{1}{n-1},0,\frac{1}{(n-1)^2}\frac{n-2}{n+1},\frac{1}{(n-1)^2}\frac{n-2}{n+1},\frac{1}{(n-1)^2}\frac{n-2}{n+1},0\right)$
if the integral curve enters $\mathcal{A}$ and $(\theta,s_4,s_5)=(0,0,0)$;
\item
$q_2=\left(0,\frac{1}{n-1},\frac{1}{n-1},0,(m+1)^2z_*,z_*,(m+1)z_*,0\right)$, where $ z_*=\frac{1}{(n-1)^2}\frac{n-2}{4(m^2+3m+1)}$, if the integral curve enters $\mathcal{A}$ and $\theta\in(0,\pi)$, $(s_4,s_5)=(0,0)$;
\item
$p_1=\left(\frac{1}{n},\frac{1}{n},\frac{1}{n},1,\frac{1}{n^2},\frac{1}{n^2},\frac{1}{n^2},0\right)$ or $p_2=\left(\frac{1}{n},\frac{1}{n},\frac{1}{n},1,(2m+3)^2z_{*},z_{*},(2m+3)z_{*},0\right), z_{*}=\frac{n-1}{n^2}\frac{1}{2(2m+3)^2+4m}$
 if the integral curve remains in $\mathcal{B}$ and $(s_4,s_5)=(0,0)$;
\item
the invariant set 
$P=\left\{\left(\frac{1}{n},\frac{1}{n},\frac{1}{n},z_1,0,0,0,\frac{2}{n\epsilon}\right)\mid z_1\in[0,1] \right\}$ if $s_4=0$ and $s_5>0$.
\end{enumerate}
\end{proposition}
\begin{proof}
If $s_4>0$ and $s_5=0$, the function $Q$ is initially negative and we have $Q'=QG<0$. Since $\mathcal{A}\cup\mathcal{B}$ is compact, the function $Q$ converges. Necessarily, each $X_i$ vanishes at the $\omega$-limit set. Since the $\omega$-limit set is invariant, all coordinates except $Z_1$ vanish at the set. For the non-Einstein expanding case, we have $\lim\limits_{\eta\to\infty}W=0$ by \cite[Proposition 1.3 and Proposition 1.18]{buzano_family_2015}. Then case (1) follows from \cite[pp. 1112-1115]{dancer_non-kahler_2009}.

For the Ricci-flat case ($s_4,s_5=0$), the integral curves are on the invariant set $\mathcal{E}$. Asymptotic analysis in \cite[Section 5]{chi2021einstein} carries over for integral curves that enter $\mathcal{A}$, proving cases (2), (3). The argument in fact becomes easier for integral curves that remain $\mathcal{B}$ since $Z_1$ does not converge to zero. Specifically, we have $nG\geq 1$ by the Cauchy--Schwarz inequality and hence the monotonicity of the B\"ohm's functional
$$
\left(\frac{Z_2^{2m+3}Z_3^{2m}}{Z_1}\right)'=\frac{Z_2^{2m+3}Z_3^{2m}}{Z_1}(nG-1)\geq 0.
$$
Thus, the $\omega$-limit set is contained in $\mathcal{E}\cap \{nG=1\}$. By the invariance of the $\omega$-limit set, the set is either $\{p_1\}$ or $\{p_2\}$. We thus prove the case $(4)$.

Finally, for the negative Einstein case ($s_4=0,s_5>0$), since $G+R_s+(4m+2)\frac{\epsilon}{2} W=1$ and $R_s\geq 0$ in $\mathcal{E}\cap \{\mathcal{A}\cup\mathcal{B}\}$, we have $\frac{1}{4m+3}\geq \frac{\epsilon}{2}W$ the Cauchy--Schwarz inequality. Hence $G-\frac{\epsilon}{2}W\geq 0$ and $W$ is monotonic increasing. The $\omega$-limit set is contained in $\mathcal{E}\cap \{\mathcal{A}\cup\mathcal{B}\}\cap \{G-\frac{\epsilon}{2}W=0\}$. Since $R_s\geq 0$ in $\mathcal{A}\cup\mathcal{B}$, the $\omega$-limit set is contained in $P$. We hence prove the case $(5)$.
\end{proof}

\subsection{Einstein metrics ($s_4=0$)}
We first consider the Einstein case, where the asymptotic geometry is encoded largely by the limiting critical point. Cases (2)-(3) in Proposition \ref{prop_Limit Classification} is covered by \cite[Section 4]{chi2021einstein}. We recall the results below.
\begin{lemma}
\label{lem_asymp for RF in A}
Each $\xi(k,\theta,0,0)$ that enters $\mathcal{A}$ is ALC. The base of the asymptotic cone is the Fubini--Study $\mathbb{CP}^{2m+1}$ if $\theta=0$; non-K\"ahler $\mathbb{CP}^{2m+1}$ if $\theta\in(0,\pi)$.
\end{lemma}
\begin{proof}
By Proposition \ref{prop_Limit Classification}, the integral curve converges to $q_1$ if $\theta=0$; $q_2$ if $\theta\in (0,\pi)$. Since $\dot{b}=\frac{X_2}{\sqrt{Z_2}}$ and $\dot{c}=\frac{X_3}{\sqrt{Z_4}}$, the limits $b_\infty:=\lim\limits_{t\to\infty} \dot{b}$ and $c_\infty:=\lim\limits_{t\to\infty} \dot{c}$ exist. Thus, the functions $b$ and $c$ exhibit linear growth at the infinity. Furthermore, we have 
$\left(\frac{4}{b^2}+\frac{4mb^2}{c^4}\right)\sim \frac{n-2}{t^2}$
as $t\to\infty$. For completeness, we verify that the $a$ converges to a positive constant, which was not explicitly shown in \cite{chi2021einstein}.

To show $\lim\limits_{t\to\infty}a$ exists, consider
\begin{equation}
\begin{split}
\ddot{\sqrt{Z_1}}=\dot{\sqrt{Z_1}}\left(-3\frac{\dot{b}}{b}-4m \frac{\dot{c}}{c} \right)-\sqrt{Z_1}(1-Z_1)\left(\frac{4}{b^2}+\frac{4mb^2}{c^4}\right).
\end{split}
\end{equation}
Since $\sqrt{Z_1}$ decreases to zero in $\mathcal{A}$, the inequality
\begin{equation}
\begin{split}
\ddot{\sqrt{Z_1}}\leq -\dot{\sqrt{Z_1}}\frac{n(1+\delta)}{t}-\sqrt{Z_1}\frac{(n-2)(1-\delta)}{t^2}
\end{split}
\end{equation}
eventually holds for some small $\delta>0$.

Let $c_1=n(1+\delta)$ and $c_2=(n-2)(1-\delta)$. Integrate both sides, the inequality 
\begin{equation}
\begin{split}
\frac{a}{b}=\sqrt{Z_1}&\leq K_1 t^{\frac{-\left(c_1-1-\sqrt{(c_1-1)^2-4c_2}\right)}{2}}+K_2 t^{\frac{-\left(c_1-1+\sqrt{(c_1-1)^2-4c_2}\right)}{2}}
\end{split}
\end{equation}
eventually holds. Since the indexes respectively converge to $-1$ and $-(n-2)$ as $\delta\to 0^+$, the inequality
\begin{equation}
\label{eqn_estimate for a/b}
\begin{split}
\frac{a}{b}\leq K_3 t^{-\frac{2}{3}}
\end{split}
\end{equation}
eventually holds for some $K_3>0$.

Since 
\begin{equation}
\ddot{a}=\left(-2\frac{\dot{b}}{b}-4m\frac{\dot{c}}{c}\right)\dot{a}+\left(\frac{a}{b}\right)^3\left(\frac{2}{b^2}+4m\frac{b^2}{c^4}\right)b ,
\end{equation}
and $a$ is decreasing in $\mathcal{A}$, the inequality
\begin{equation}
\begin{split}
\ddot{a}&\leq -\dot{a}\frac{(n-1)(1+\delta)}{t}+\left(\frac{a}{b}\right)^3\frac{(n-2)(1+\delta)b_\infty}{t}\\
&\leq -\dot{a}\frac{2(n-1)}{t}+K_3^3\frac{2(n-2)b_\infty}{t^3}
\end{split}
\end{equation}
eventually holds. Multiply both sides by $t^{2(n-1)}$ and integrate, we have 
\begin{equation}
\begin{split}
\dot{a}&\leq \frac{2(n-2)b_\infty}{t^{2(n-1)}}\int_{t_0}^t K_3^3 \tau^{2n-5} d\tau \quad \text{by \eqref{eqn_estimate for a/b}}+\frac{K_4}{t^{2(n-1)}}\\
&\leq \frac{b_\infty K_3^3}{t^2}+\frac{K_5}{t^{2(n-1)}}\\
&\leq \frac{K_6}{t^2}
\end{split}
\end{equation}
eventually for some $K_6>0$.
Therefore, the function $a$ is bounded above along an integral curve in $\mathcal{A}$. The ALC asymptotic is established. 
\end{proof}

We proceed to consider Ricci-flat integral curves that remains in $\mathcal{B}$. The following proposition extends \cite[Proposition 5.5]{chi2021einstein}, which identifies the base of the asymptotic cone in terms of the parameter $\theta$.
\begin{proposition}
\label{prop_repellent in B 1}
For a $\xi(k,\theta,0,0)$ that remains in $\mathcal{B}$ and $\sqrt{\frac{Z_3}{Z_2}}<1$ initially, it is necessary that $\nu^2=\lim\limits_{\eta\to\infty}\sqrt{\frac{Z_3}{Z_2}}<1$.
\end{proposition}
\begin{proof}
Suppose $\nu^2=1$. If $\sqrt{\frac{Z_3}{Z_2}}<1$ initially, the function $X_2-X_3$ is not identically zero along the integral curve. If $X_2-X_3$ changes sign infinitely many times, so does 
$$R_2-R_3=(\sqrt{Z_2}-\sqrt{Z_3})((4-2Z_1)\sqrt{Z_2}-(4m+4+2Z_1)\sqrt{Z_3}).$$ Since $\sqrt{\frac{Z_3}{Z_2}}\leq 1$ and $\lim\limits_{\eta\to\infty}Z_1=\mu^2\in [0,1]$ in $\mathcal{B}$, the function $\sqrt{\frac{Z_3}{Z_2}}$ eventually oscillates near $\frac{4-2\mu^2}{4m+4+2\mu^2}<1$. Hence $\lim\limits_{\eta\to\infty}\sqrt{\frac{Z_3}{Z_2}}\neq 1$, a contradiction. Therefore, the function $X_2-X_3$ eventually has sign. Since $\left(\frac{Z_3}{Z_2}\right)'=2\frac{Z_3}{Z_2}(X_2-X_3)$, and $\frac{Z_3}{Z_2}\leq 1$ in $\mathcal{B}$, it is necessary that $X_2-X_3$ being eventually positive given that $\nu^2=1$.

We have
\begin{equation}
\label{eqn_key derivative 1}
\begin{split}
&\left(2(\sqrt{Z_2}-\sqrt{Z_3})+X_3-X_2\right)'\\
&=\left(2(\sqrt{Z_2}-\sqrt{Z_3})+X_3-X_2\right)G+(X_2-X_3)\left(1-4\sqrt{Z_3}\right)\\
&\quad +(\sqrt{Z_2}-\sqrt{Z_3})(-2X_2 +4(m+1)\sqrt{Z_3} - 4\sqrt{Z_2} +2Z_1(\sqrt{Z_2} +\sqrt{Z_3}))
\end{split}
\end{equation}
Since $2(\sqrt{Z_2}-\sqrt{Z_3})+X_3-X_2\geq 0$ in $\mathcal{B}$, we have
\begin{equation}
\begin{split}
&\left(2(\sqrt{Z_2}-\sqrt{Z_3})+X_3-X_2\right)'\\
&\geq  (X_2-X_3)\left(1-4\sqrt{Z_3}\right)\notag\\
&\quad +(\sqrt{Z_2}-\sqrt{Z_3})(-2X_2 +4(m+1)\sqrt{Z_3} - 4\sqrt{Z_2} +2Z_1(\sqrt{Z_2} +\sqrt{Z_3}))
\end{split}
\end{equation}
By Proposition \ref{prop_Limit Classification}, the integral curve converge to $p_1$ if $\nu^2=1$. The coefficients for $X_2-X_3$ and $\sqrt{Z_2}-\sqrt{Z_3}$ above are positive at $p_1$. With $X_2-X_3$ being eventually positive, the function $2(\sqrt{Z_2}-\sqrt{Z_3})+X_3-X_2$ eventually increases. But the function is non-negative in $\mathcal{B}$ and it vanishes at $p_1$. We reach
a contradiction.
\end{proof}

We have the following lemma.
\begin{lemma}
\label{lem_asymp for RF in B}
A $\xi(k,\theta,0,0)$ that remains in $\mathcal{B}$ is AC. The base of the cone is the standard $\mathbb{S}^{4m+3}/\mathbb{Z}_k$ if $\theta=0$; Jensen $\mathbb{S}^{4m+3}/\mathbb{Z}_k$ if $\theta\in (0,\pi)$.
\end{lemma}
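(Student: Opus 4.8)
The strategy is to feed the $\omega$-limit analysis of Proposition \ref{prop_only two points in the omega set} into an explicit reconstruction of the metric from the limiting $XZ$-values, and then to separate the two limit points $p_1,p_2$ according to $\theta$. Since a $\xi(k,\theta,0,0)$ that stays in $\mathcal{B}$ converges to $p_1$ or $p_2$, and both satisfy $X_1=X_2=X_3=\tfrac1n$ together with $Z_1=1$, the end is forced to be cone-like; the geometric content is to decide which limit occurs and to read off the base metric.

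For $\theta=0$ the curve lies in the invariant set $\mathcal{RS}_{\text{FS}}$, on which $Z_2=Z_3$ holds identically. As $p_2$ has $Z_2=(2m+3)^2Z_3\neq Z_3$, it cannot be the limit, so the curve converges to $p_1$. For $\theta\in(0,\pi)$ the curve avoids $\mathcal{RS}_{\text{FS}}$, and I would show it converges to $p_2$ by splitting on whether $X_2-X_3\equiv 0$. If $X_2-X_3\not\equiv 0$, Proposition \ref{prop_going to AC-2} gives $p_2$ directly. If instead $X_2\equiv X_3$, then $\left(\tfrac{Z_3}{Z_2}\right)'=4\tfrac{Z_3}{Z_2}(X_2-X_3)\equiv 0$, so $\tfrac{Z_3}{Z_2}$ stays equal to its initial value $\tfrac{1+\cos\theta}{2(1+\sin\theta)}<1$; this rules out $p_1$ (where $\tfrac{Z_3}{Z_2}=1$), hence by Proposition \ref{prop_only two points in the omega set} the limit is again $p_2$. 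Thus $\theta=0$ forces $p_1$ and $\theta\in(0,\pi)$ forces $p_2$, uniformly in $\theta$.

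To pass to the metric I would use that $f$ is constant on $\mathcal{RF}$, so $\trace(L)-\dot f=\trace(L)$ and the definitions \eqref{eqn_new_variable_for_new_coordinate} yield the $W$-free identities $\dot a=X_1\sqrt{Z_1/Z_2}$, $\dot b=X_2/\sqrt{Z_2}$, $\dot c=X_3/\sqrt{Z_4}$. At either limit point these tend to positive constants, and $\trace(L)=1/(b\sqrt{Z_2})\to 0$ shows $t\to\infty$; hence $a,b,c$ grow linearly and the metric \eqref{eqn_coho1metric} is asymptotic to a cone $dt^2+t^2 g_\infty$ over $\mathbb{S}^{4m+3}/\mathbb{Z}_k$. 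Since $Z_1\to1$ gives $a/b\to1$, the $\mathbb{S}^3$-fibre always stays round. At $p_1$ one computes $\dot a=\dot b=\dot c=1$, so $g_\infty$ is the round metric and the base is the standard $\mathbb{S}^{4m+3}/\mathbb{Z}_k$, matching $\theta=0$. At $p_2$ one has $\lim\sqrt{Z_3/Z_2}=\tfrac1{2m+3}$, giving $\dot c/\dot b=\sqrt{2m+3}$, so the $\mathbb{HP}^m$-base is rescaled against the round $\mathbb{S}^3$-fibre and $g_\infty$ is the Jensen Einstein metric, matching $\theta\in(0,\pi)$.

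The main obstacle I anticipate is upgrading the leading-order limits $\dot a,\dot b,\dot c\to\lambda_i$ to a genuine AC statement, i.e. convergence of the rescaled metric to $g_\infty$ with a quantitative error. This requires controlling the rate of approach to $p_1$ or $p_2$, which I expect to obtain from exponential decay along the stable directions of the linearization at these critical points, following the AC analysis of \cite[Section 4]{chi_Non-Shrinking_2024}; the delicate point is verifying the eigenvalue configuration at $p_1,p_2$ so that the relevant perturbations of $Z_1-1$ and $\sqrt{Z_3/Z_2}-\nu^2$ decay fast enough. A secondary subtlety is making the $X_2\equiv X_3$ branch airtight for every $\theta\in(0,\pi)$ rather than for generic $\theta$ only, which the constant-ratio argument above settles uniformly.
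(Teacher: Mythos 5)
Your proposal is correct and takes essentially the same route as the paper: convergence to $p_1$ or $p_2$ via Proposition \ref{prop_only two points in the omega set}, with $\theta=0$ forced to $p_1$ by the invariance of $\mathcal{RS}_{\text{FS}}$ and $\theta\in(0,\pi)$ forced to $p_2$ by Proposition \ref{prop_going to AC-2}. Your two additions---the constant-ratio argument disposing of the $X_2\equiv X_3$ branch that the hypothesis of Proposition \ref{prop_going to AC-2} excludes (the paper invokes it without checking this), and the explicit $W$-free identities $\dot a=X_1\sqrt{Z_1/Z_2}$, $\dot b=X_2/\sqrt{Z_2}$, $\dot c=X_3/\sqrt{Z_4}$ yielding linear growth and the cone base (the paper leaves this to the analysis cited from prior work)---are sound and only make explicit what the paper's proof leaves implicit.
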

\begin{proof}
By (4) in Proposition \ref{prop_Limit Classification}, the integral curve converges to $p_1$ or $p_2$. The linear growth of all metric components are derived by evaluating $X_1\sqrt{\frac{Z_1}{Z_2}}$, $\frac{X_2}{\sqrt{Z_2}}$, and $\frac{X_3}{\sqrt{Z_4}}$ at $p_1$ or $p_2$. If $\theta=0$, the integral curve stays in $\mathcal{R}_{FS}$ and hence converges to $p_1$. Since $\frac{Z_3}{Z_2}<1$ initially along the integral curve with $\theta>0$, the statement follows from Proposition \ref{prop_repellent in B 1}.
\end{proof}

For the negative Einstein metrics, the integral curve converges to some point in $P$ by Proposition \ref{prop_Limit Classification}. By each one of $a,b,c$ exhibits exponential growth since $\frac{X_i}{\sqrt{W}}$ at each point in $P$. We hence have the following lemma.
\begin{lemma}
\label{lem_asymp for PE}
Each $\xi(k,\theta,0,s_5)$ with $s_5>0$ is AH. 
\end{lemma}

\subsection{Non-Einstein Ricci solitons $(s_4>0)$}
Subtlety occurs for non-Einstein solitons, since the same limiting critical point $p_0$ may correspond to different asymptotic geometries, depending on the limiting ratios of the coordinate functions. For non-Einstein expanders ($s_4,s_5>0$), the asymptotic analysis in \cite[pp. 1122--1123]{dancer_non-kahler_2009} applies to all integral curves considered here. Hence we obtain the following lemma.

\begin{lemma}
\label{lem_asymp for expanders}
Each $\xi(k,\theta,s_4,s_5)$ with $s_4,s_5>0$ is AC. 
\end{lemma}

For steady Ricci solitons ($s_4>0$, $s_5=0$), the ratio analysis for integral curves entering $\mathcal{A}$ was established in \cite[Section 4]{chi_Non-Shrinking_2024}. Thus, we have
\begin{lemma}
\label{lem_asymp for steady in A}
Each $\xi(k,\theta,s_4,0)$ with $s_4>0$ that enters $\mathcal{A}$ is ACP. The base of the asymptotic paraboloid is the Fubini--Study $\mathbb{CP}^{2m+1}$ if $\theta=0$; non-K\"ahler $\mathbb{CP}^{2m+1}$ if $\theta\in(0,\pi)$.
\end{lemma}

The non-collapsed steady solitons arise from integral curves staying in $\mathcal B$, which is analyzed separately below.

\begin{proposition}
\label{prop_Z1 to 1 in B}
If a $\xi(k,\theta,s_4,0)$ with $s_4> 0$ remains in $\mathcal{B}$, the limits $\mu^2=\lim\limits_{\eta\to\infty}Z_1$ and $\nu^2=\lim\limits_{\eta\to\infty}\sqrt{\frac{Z_3}{Z_2}}$ exist. The possible values for $(\mu^2,\nu^2)$ are 
$$
(1,1),\quad \left(1,\frac{1}{2m+3}\right).
$$
\end{proposition}
\begin{proof}
Since $Z_1$ monotonically increases in the compact set $\mathcal{B}$, we must have $\mu^2\in (0,1]$. We can hence generalize \cite[Proposition 3.6]{wink2023cohomogeneity} and show that $\nu^2$ exists; see also \cite[(4.11)]{chi_Non-Shrinking_2024}. Possible values for $(\mu^2,\nu^2)$ are derived directly from \cite[Lemma 4.8]{chi_Non-Shrinking_2024}.
\end{proof}

The paraboloidal growth now follows from \cite[Lemma 4.8]{chi_Non-Shrinking_2024}, which establishes that
\begin{equation}
\label{eqn_X1Z1/Z2}
\begin{split}
\lim\limits_{t\to\infty}a\dot{a}&=\lim\limits_{\eta\to\infty}\frac{Q}{C}\frac{X_1Z_1}{Z_2}=\frac{1}{-C}\left(2+4 m \nu^4\right),\\
\lim\limits_{t\to\infty}b\dot{b}&=\lim\limits_{\eta\to\infty}\frac{Q}{C}\frac{X_2}{Z_2}=\frac{1}{-C}\left(2+4 m \nu^4\right),\\
\lim\limits_{t\to\infty}c\dot{c}&=\lim\limits_{\eta\to\infty}\frac{Q}{C}\frac{X_3}{Z_4}=\frac{1}{-C}\left((4m+8)-6 \nu^2 \right).
\end{split}
\end{equation}
and hence
\begin{equation}
\label{eqn_paraboloid asymtptotics}
\begin{split}
a^2&\sim \frac{1}{\sqrt{-C}}\left(2+ 4m \nu^4\right)t,\\
b^2&\sim \frac{1}{\sqrt{-C}}\left(2+4m \nu^4\right)t,\\
c^2&\sim  \frac{1}{\sqrt{-C}}\left((4m+8)-6\nu^2\right)t.
\end{split}
\end{equation}
It remains to identify the base of the asymptotic paraboloid in terms of the parameter $\theta$. The following proposition extends \cite[Proposition 4.10]{chi_Non-Shrinking_2024} from integral curves in $\mathcal{A}$ to those remaining in $\mathcal{B}$. We include the proof for completeness.

\begin{proposition}
\label{prop_repellent in B 2}
For a $\xi(k,\theta,s_4,0)$ with $s_4> 0$ that remains in $\mathcal{B}$ and $\sqrt{\frac{Z_3}{Z_2}}<1$ initially, if $\nu^2=\lim\limits_{\eta\to\infty}\sqrt{\frac{Z_3}{Z_2}}$ exists, it is necessary that $\nu^2<1$.
\end{proposition}
\begin{proof}
With the similar argument as Proposition \ref{prop_repellent in B 1}, it is necessary that $X_2-X_3$ being eventually positive given that $\nu^2=1$.

Rewrite \eqref{eqn_key derivative 1} as 
\begin{equation}
\label{eqn_key derivative 2}
\begin{split}
&\left(2(\sqrt{Z_2}-\sqrt{Z_3})+X_3-X_2\right)'\\
&=(X_2-X_3)(1-G-4\sqrt{Z_3})\\
&\quad +(\sqrt{Z_2}-\sqrt{Z_3})(2G-2X_2 +4(m+1)\sqrt{Z_3} - 4\sqrt{Z_2} +2Z_1(\sqrt{Z_2} +\sqrt{Z_3}))
\end{split}
\end{equation}
Since $X_2-X_3$ is eventually positive, the first term is eventually positive by Proposition \ref{prop_Limit Classification}. We have 
\begin{equation}
\begin{split}
&\left(2(\sqrt{Z_2}-\sqrt{Z_3})+X_3-X_2\right)'\\
&\geq (\sqrt{Z_2}-\sqrt{Z_3})(2G-2X_2 +4(m+1)\sqrt{Z_3} - 4\sqrt{Z_2})\\
&= (\sqrt{Z_2}-\sqrt{Z_3})\left(2G-2\frac{X_2}{Z_2}Z_2 +4(m+1)\sqrt{Z_3} - 4\sqrt{Z_2}\right).
\end{split}
\end{equation}
Since $Z_2,Z_3\to 0$, $\nu^2=1$, and $\frac{X_2}{Z_2}\to 2+4m\nu^2$ by \eqref{eqn_X1Z1/Z2}, the second factor is eventually positive. Therefore, the derivative $\left(2(\sqrt{Z_2}-\sqrt{Z_3})+X_3-X_2\right)'$ is eventually positive and does not converge to zero, a contradiction to (1) of Proposition \ref{prop_Limit Classification}.
\end{proof}

Therefore, we have the following lemma.
\begin{lemma}
\label{lem_asymp for steady in B}
Each $\xi(k,\theta,s_4,0)$ with $s_4>0$ that stays in $\mathcal{B}$ is AP. The base of the asymptotic paraboloid is the standard  $\mathbb{S}^{4m+3}/\mathbb{Z}_k$ if $\theta=0$; Jensen $\mathbb{S}^{4m+3}/\mathbb{Z}_k$ if $\theta\in(0,\pi)$.
\end{lemma}
\begin{proof}
The paraboloidal asymptotic is established in \eqref{eqn_paraboloid asymtptotics}. The base of the paraboloid is the standard $\mathbb{S}^{4m+3}/\mathbb{Z}_k$ ($\nu^2=1$) or the Jensen $\mathbb{S}^{4m+3}/\mathbb{Z}_k$ ($\nu^2=\frac{1}{2m+3}$). If $\theta=0$, the integral curve $\xi(k,0,s_4,0)$ stays in $\mathcal{RS}_{\text{FS}}$, where $X_2=X_3$ and $Z_2=Z_3$. Thus the base is round. Since $\frac{Z_3}{Z_2}<1$ initially along $\xi(k,\theta,s_4,0)$ with $\theta>0$, the limit $\nu^2< 1$ by Proposition \ref{prop_repellent in B 2}. The base of the asymptotic paraboloid is the Jensen $\mathbb{S}^{4m+3}/\mathbb{Z}_k$.
\end{proof}

We are ready to prove the main theorems.
\begin{proof}[Proof for Theorem \ref{thm_1}]
The completeness of Ricci solitons in Theorem \ref{thm_1} is proved by Lemma \ref{lem_long_existing xi_1}. 

We show that each $\xi(k,\theta,s_4,0)$ in Lemma \ref{lem_long_existing xi_1} enters $\mathcal{F}\cap\mathcal{A}$. Since $1-Z_1\geq 0$ holds in the invariant set $\mathcal{F}$, by Lemma \ref{lem_B, C or D}, each $\xi(k,\theta,s_4,0)$ in Lemma \ref{lem_long_existing xi_1} enters $\mathcal{A}$ or stays in $\mathcal{B}$. We show that the latter situation is impossible.

Suppose an integral curve in Lemma \ref{lem_long_existing xi_1} stays in $\mathcal{B}$. By \eqref{eqn_derivative of F2}, we have
\begin{equation}
\begin{split}
F_2'&=F_2G+(4\sqrt{Z_1Z_2}-1)F_2+(1-X_1-2\sqrt{Z_1Z_2})2\sqrt{\frac{Z_2}{Z_1}}(1-Z_1)+4mZ_3(1-Z_1)\\
&\geq (4\sqrt{Z_1Z_2}-1)F_2+(1-X_1-2\sqrt{Z_1Z_2})2\sqrt{\frac{Z_2}{Z_1}}(1-Z_1)\\
&=(4\sqrt{Z_1Z_2}-1)\left(X_2-X_1\right)+\left(1-\frac{1}{2}\sqrt{\frac{X_1Z_1}{Z_2}}\frac{\sqrt{X_1}}{Z_1}\right)4Z_2(1-Z_1)
\end{split}
\end{equation}
If a $\xi(k,\theta,0,0)$ stays in $\mathcal{F}\cap\mathcal{B}$, the integral curve converges to $p_1$ or $p_2$ by Proposition \ref{prop_Limit Classification}. As the first term vanishes, while the factor $\left(1-\frac{1}{2}\sqrt{\frac{X_1Z_1}{Z_2}}\frac{\sqrt{X_1}}{Z_1}\right)>0$ at both critical points, the derivative $F_2'$ is positive near $p_1$ and $p_2$ while the integral curve is in $\mathcal{A}$. But $F_2$ vanishes at $p_1$ and $p_2$. We reach a contradiction.

If a $\xi(k,\theta,s_4,0)$ with $s_4>0$ stays in $\mathcal{F}\cap\mathcal{B}$, the first term is eventually positive by Proposition \ref{prop_Limit Classification}. The second term is also eventually positive by \eqref{eqn_X1Z1/Z2}. Therefore, the derivative $F_2'$ is eventually positive. Since $F_2>0$ initially and the function vanishes at $(0,0,0,1,0,0,0,0)$, we reach a contradiction.

The asymptotics for steady solitons are thus established by Lemma \ref{lem_asymp for RF in A} and Lemma \ref{lem_asymp for steady in A}. The asymptotics for expanders are thus established by Lemma \ref{lem_asymp for PE} and Lemma \ref{lem_asymp for expanders}. 
\end{proof}

\begin{proof}[Proof for Theorem \ref{thm_2-1}-\ref{thm_3}]

For Theorem \ref{thm_2-1}-\ref{thm_2-2}, the completeness is proved by Lemma \ref{lem_long exiting xik}. he asymptotics for expanders are thus established by Lemma \ref{lem_asymp for PE} and Lemma \ref{lem_asymp for expanders}. For steady solitons, if the integral curve enters $\mathcal{A}$, its asymptotic is either ACP by Lemma \ref{lem_asymp for steady in A}, or ALC by Lemma \ref{lem_asymp for RF in A}. If the integral curve enters $\mathcal{B}$, the Ricci soliton is non-collapsed. Its asymptotic is either AP by Lemma \ref{lem_asymp for steady in B}, or AC by Lemma \ref{lem_asymp for RF in B}.

To prove Theorem \ref{thm_3}, by Lemma \ref{lem_asymp for RF in B} and Lemma \ref{lem_asymp for steady in B}, it suffices to prove the existence of some $\xi(k,\theta,s_4,0)$ and $\xi(k,\theta,0,0)$ that stay in $\mathcal{B}$. This is immediate by Lemma \ref{lem_for thm 1.4}.
\end{proof}

\bibliography{Bibliography}
\bibliographystyle{alpha}
\end{document}